\newcommand{\comment}[1]{}
\newcommand{\pa}{\partial}
\definecolor{airforce}{rgb}{0.36, 0.74, 0.86}
 	\definecolor{blue-green}{rgb}{0.0, 0.57, 0.87}
\newcommand{\s}{{\sigma}}
\renewcommand{\Re}{\operatorname{Re}}
\renewcommand{\Im}{\operatorname{Im}}
\newcommand{\Id}{\mathds{1}}
\newcommand{\opw}{{Op^{\mathrm{W}}}}
\newcommand{\opbw}{{Op^{\mathrm{BW}}}}
\newcommand{\gr}[1]{\textbf{#1}}
\providecommand{\vect}[2]{{\bigl[\begin{smallmatrix}#1\\#2\end{smallmatrix}\bigr]}}   
\providecommand{\sm}[4]{{\bigl[\begin{smallmatrix}#1&#2\\#3&#4\end{smallmatrix}\bigr]}}
\newtheorem{theorem}{Theorem}[section]
\newtheorem*{thm*}{Theorem}
\newtheorem{proposition}[theorem]{Proposition}
\newtheorem{lemma}[theorem]{Lemma}
\newtheorem{cor}[theorem]{Corollary}
\newtheorem*{cor*}{Corollary}
\newtheorem{remark}[theorem]{Remark}
\newtheorem{hyp}[theorem]{Hypothesis}
\newtheorem{definition}[theorem]{Definition}
\numberwithin{equation}{section}
\newcommand{\ii}{{\rm i}}
\newcommand{\x}{\xi}
\newcommand{\C}{{\mathbb C}}
\newcommand{\N}{{\mathbb N}}
\newcommand{\R}{{\mathbb R}}
\newcommand{\T}{{\mathbb T}}
\newcommand{\Z}{{\mathbb Z}}
\newcommand{\cH}{{\mathcal H}}
\newcommand{\0}{{(0)}}
\newcommand{\nnorm}[1]{{\left\vert\kern-0.25ex\left\vert\kern-0.25ex\left\vert #1 
    \right\vert\kern-0.25ex\right\vert\kern-0.25ex\right\vert}}
\providecommand{\vect}[2]{{\bigl[\begin{smallmatrix}#1\\#2\end{smallmatrix}\bigr]}} 
\providecommand{\sm}[4]{{\bigl[\begin{smallmatrix}#1&#2\\#3&#4\end{smallmatrix}\bigr]}}
\renewcommand{\tocsection}[3]{%
\indentlabel{\@ifnotempty{#2}{\bfseries\ignorespaces#1 #2\quad}}\bfseries#3}
\def\l@subsection{\@tocline{2}{0pt}{2.5pc}{5pc}{}}
\def\l@subsubsection{\@tocline{3}{0pt}{4.5pc}{5pc}{}}
\renewcommand\tocchapter[3]{%
  \indentlabel{\@ifnotempty{#2}{\ignorespaces#2.\quad}}#3%
}
\begin{document} 
 
\title
{Local well posedness for a system of quasilinear PDEs modelling suspension bridges}
\date{}

\author{Roberto Feola}
\address{\scriptsize{Dipartimento di Matematica e Fisica, Universit\`a degli Studi RomaTre, 
Largo San Leonardo Murialdo 1, 00144, Roma, Italy}}
\email{roberto.feola@uniroma3.it}

\author{Filippo  Giuliani}
\address{\scriptsize{Dipartimento di Matematica, Politecnico di Milano, 
Piazza Leonardo da Vinci 32, 20133, Milano, Italy}}
\email{filippo.giuliani@polimi.it}

\author{Felice  Iandoli}
\address{\scriptsize{Dipartimento di Matematica ed Informatica, 
Universit\`a della Calabria, Ponte Pietro Bucci,  87036, Rende, Italy}}
\email{felice.iandoli@unical.it}

\author{Jessica Elisa Massetti} 
\address{\scriptsize{Dipartimento di Matematica e Fisica, Universit\`a degli Studi RomaTre, 
Largo San Leonardo Murialdo 1, 00144, Roma, Italy}}
\email{jessicaelisa.massetti@uniroma3.it}

\keywords{Quasilinear beam-wave equations, local well posedness, energy method, paradifferential calculus, suspension bridges} 

\subjclass[2010]{35G55, 35A01, 35M11, 35S50   }

   
\begin{abstract}    
In this paper we provide a local well posedness result 
for a quasilinear beam-wave system of equations on a one-dimensional 
spatial domain under periodic and Dirichlet boundary conditions. 
This kind of systems provides a refined model for the time-evolution of suspension bridges, 
where the beam and wave equations describe respectively the longitudinal 
and torsional motion of the deck. The quasilinearity arises when one takes into account the nonlinear restoring action of deformable cables and hangers. 
To obtain the \emph{a priori} estimates for the solutions of the linearized equation we build a modified energy by means of paradifferential changes of variables. Then we construct the solutions of the nonlinear problem by using a quasilinear iterative scheme \emph{\`a la} Kato.
%
\end{abstract}

\maketitle

\setcounter{tocdepth}{1}
\tableofcontents


\section{Introduction}

In this paper we prove a result of local well posedness in Sobolev regularity for a wide class of coupled nonlinear beam-wave equations. We consider systems of the form

\begin{equation}\label{sistemaponte}
\begin{cases}
y_{tt}=\mathcal{B} y+F_1(x,y,y_x,y_{xx},\theta,\theta_x,\theta_{xx})+\alpha y_t+\gamma \mathtt{f}_{b}(t), \qquad 
\\
\theta_{tt}=\mathcal{W} \theta+F_2(x,y,y_x,y_{xx},\theta,\theta_x,\theta_{xx})+\beta \theta_t
+\delta \mathtt{f}_{w}(t)\,, 
\quad 
\end{cases}
\end{equation}
\begin{align*}
y=y(t, x), \quad \theta&=\theta(t, x)\,, \qquad t\in \R, \quad 
x\in [0, L],\\
&L>0, \qquad \alpha, \beta, \gamma, \delta\in \R,
\end{align*}
under Dirichlet and periodic 
boundary conditions,
where $\mathcal{B}$ and $\mathcal{W}$ 
are the following time-independent linear operators
with variable  coefficients 
\begin{equation}\label{opastratti}
\begin{aligned}
\mathcal{B}&=
-\,  b(x) \circ \pa_{x}^4 
+B\,,
\qquad
\mathcal{W}&=  c(x) \circ \pa^2_{x}\,  +C\,,
\end{aligned}
\end{equation}
for some smooth functions $b(x), c(x)$
and where $B$ and $C$ are linear operators mapping 
$H^{s+2}$ into $H^{s}$\footnote{Here $H^{s}$ denotes the standard Sobolev space. See \eqref{spazioHHss} for a detailed definition.} for all $s\geq 0$, while 
$\mathtt{f}_{b}, \mathtt{f}_{w}{\in C^{\infty}(\R; \R)}$ are smooth functions depending on time.

The nonlinearities 
 $F_i\in C^{\infty}(\mathbb{R}^7;\mathbb{R}),\,i=1, 2,$ are 
homogeneous polynomial of  degree $2$ in
all the variables $y,y_x,y_{xx}$ and $\theta,\theta_x,\theta_{xx}$. 
Therefore,  system \eqref{sistemaponte} is a coupled system of \textbf{quasilinear} PDEs in the following twofold sense: 
\begin{itemize}[leftmargin=*]
\item[$\bullet$] The highest order spatial derivatives operators $b(x)\partial_x^4$ and $-c(x)\partial_x^2$ have non-constant coefficients.
\item[$\bullet$]  The nonlinearity $F_2$ depends on the highest order derivative appearing in the linear part, namely $\theta_{xx}$.
 \end{itemize}

\smallskip

The equations \eqref{sistemaponte} are inspired by models 
that describe the time-evolution of \textit{suspension bridges}. A suspension bridge is a bridge which is constituted by the following main
elements: four towers, a rectangular deck, two sustaining cables and some hangers which link the deck to the main cables. The most common suspension bridges possess a central main span and two minor side ones.  In the present article we shall focus on the time-evolution of the main span ({we refer to \cite{Garrione2019} for a mathematical analysis on the complete structure}).

More specifically, we consider a \emph{fish-bone} model as proposed in \cite{BerchioGazzola2015}, namely we assume that the deck is a degenerate plate whose cross sections are free to rotate around the mid-line of the roadway.

We consider the line composed by the barycenters of the cross sections of the deck and we denote by $y=y(t, x)$ the vertical displacement of such a line. On the other hand, the variable $\theta=\theta(t, x)$ describes the angle of rotation of the cross sections with respect to the horizontal position. 
In this way, our model describes the nonlinear behavior of the bridge in a small torsional regime where, at the linear level, the longitudinal and torsional motions of the deck are governed by a beam equation and a wave equation respectively. 

Note that the terms $\alpha\, y_t$ and $\beta\, \theta_t$ represent some damping effect, while $\gamma\, \mathtt{f}_b(t)$ and $\delta\, \mathtt{f}_w(t)$ are forcing terms whose effect can be interpreted as the action of the wind.

{From a mathematical point of view, the main feature of system \eqref{sistemaponte} is the quasilinear nature of the equations. This arises from the fact that we consider \emph{deformable} (i.e. not rigid) main cables. This feature has been taken into account by Arioli-Gazzola in \cite{ArioliGazzola2017} for the derivation of a refined model for suspension bridges (we refer also to \cite{Crasta:2020wx} for a more recent model). We continue the discussion about the applications to suspension bridges in section \ref{sec:derivation}.
However, we point out that it is beyond of the scope of the present paper to provide 
 an exhaustive treatment of the wide literature on the mathematical analysis of suspension bridges, 
 therefore we refer the interested reader to the book \cite{Gazzola:book} and the work \cite{Falocchi} (and references therein).}

\smallskip
\noindent
 The main result of this paper concerns the existence and uniqueness of  classical solutions for system \eqref{sistemaponte} and it is stated in the next section.
  
\subsection{Main result: the local well posedness}
Let us introduce the main hypothesis under which  the local well posedness 
of system \eqref{sistemaponte} is guaranteed, under periodic boundary conditions, i.e.
$x\in \T=\R/2\pi \Z$. 
{For the sake of convenience}, we set $L=2\pi$ as the length of the deck in the case of periodic boundary conditions, and $L=\pi$ in the case of Dirichlet boundary conditions respectively. 
In this way both problems are reduced to the search of solutions for the system \eqref{sistemaponte} with spatial period $2\pi$. The result for Dirichlet boundary conditions will follow by requiring that solutions have to be odd in $x$. This is the content of main Theorem \ref{thm:main1} below and Corollary \ref{coro1}.
For this reason, from now on we consider the system of equations \eqref{sistemaponte} on the one-dimensional torus $\T:=\R/2\pi \Z$ and we will make widely use of Fourier and microlocal analysis.

By slight abuse of notation, in the following we shall denote by $H^{s}$ either $H^{s}(\T;\R)$ or $H^{s}(\T;\C)$, $s\in\R$, 
the standard Sobolev space of 
real or complex valued periodic functions, whenever the context hides no ambiguity. Finally, we denote by $B_R(H^s)$ the open ball of radius $R$ in the space $H^s$.
\smallskip

\begin{hyp}\label{hyp1}

\noindent \emph{(Ellipticity).}
There exist constants $\mathtt{c}_1,\mathtt{c}_2>0$
such that 
{
\begin{equation*}
\begin{aligned}
&b(x)>\mathtt{c}_1\,,\qquad c(x)>\mathtt{c}_2\,,\qquad \forall\, x\in \mathbb{T}\,.
\end{aligned}
\end{equation*}
}
\end{hyp}

Note that the assumption above is naturally verified in physical applications.

\begin{theorem}{\bf (Local well posedness).}\label{thm:main1}
Let $\alpha, \beta, \gamma, \delta\in \R$.
Assume Hypothesis \ref{hyp1} and let $s\geq \mathfrak{s}+2>5$. 
Then there exists $R>0$ depending on $\mathtt{c_2}$ and $F_2$ such that the following holds.

\noindent
For any initial conditions satisfying 
\begin{align*}
(y_0,y_1)\in (H^{s+1}\times H^{s-1})\cap B_{R}(H^6\times H^4),\\
(\theta_0,\theta_1)\in (H^{s+\frac{1}{2}}\times H^{s-\frac12})\cap B_{R}(H^{\frac{11}{2}}\times H^{\frac{9}{2}}),
\end{align*}
there exists  a time
\[
T=T(\alpha, \beta, \gamma, \delta; s,\|y_0\|_{H^{\mathfrak{s}+3}}, \|y_1\|_{H^{\mathfrak{s}+1}} , 
\|\theta_0\|_{H^{\mathfrak{s}+\frac{5}{2}}},
\|\theta_1\|_{H^{\mathfrak{s}+\frac{3}{2}}})>0
\]
and a unique solution $(y(t),\theta(t))$ 
of \eqref{sistemaponte} 
with 
\[
(y,\pa_{t}y)(0)=(y_0,y_1)\,, 
\quad
(\theta,\pa_{t}\theta)(0)=(\theta_0,\theta_1)
\]
such that 
\[
(y(t),\theta(t))\in C^0\big([0,T), H^{s+1}\times H^{s+\frac12}\big)
\cap C^1\big([0,T), H^{s-1}\times H^{s-\frac12}\big)\,.
\]
Moreover the solution map $(y_0,y_1,\theta_0,\theta_1)\mapsto (y(t),\theta(t))$ 
is continuous with respect to the $H^{s+1}\times H^{s+\frac12}$ topology for $t\in [0,T)$.
\end{theorem}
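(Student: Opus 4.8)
\noindent\emph{Proof proposal.} The plan is to construct the solution by a quasilinear iterative scheme \`a la Kato, the analytical heart being \emph{a priori} energy estimates for the associated linearized system, obtained through paradifferential changes of unknown. First I would rewrite \eqref{sistemaponte} as a first--order-in-time system for $(y,y_t,\theta,\theta_t)$ and, for the energy estimates, as a pair of scalar complex equations: a Schr\"odinger--type one for an unknown $u\simeq y_t+\im\,L_b\,y$ attached to $\mathcal B$, with $L_b$ a paradifferential operator of principal symbol $\sqrt{b(x)}\,\xi^2$, so that $u\in H^{\sigma}\iff(y,y_t)\in H^{\sigma+2}\times H^{\sigma}$; and a half--wave one for $v\simeq\theta_t+\im\,L_c\,\theta$ attached to $\mathcal W$, with $L_c$ of principal symbol $\sqrt{c(x)}\,|\xi|$, so that $v\in H^{\sigma}\iff(\theta,\theta_t)\in H^{\sigma+1}\times H^{\sigma}$ --- which makes the Sobolev indices in the statement the natural ones for the two blocks, merely sharing the parameter $s$. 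The iteration is the quasilinear one: given $(y^{(n)},\theta^{(n)})$, define $(y^{(n+1)},\theta^{(n+1)})$ as the solution of the \emph{linear} system obtained by freezing the variable coefficients at $(y^{(n)},\theta^{(n)})$ and by replacing each $F_i$ with its differential $\di F_i(u^{(n)})$ applied to $(y^{(n+1)},\theta^{(n+1)})$; this linearisation of the nonlinearity is forced by the quasilinear character, as $F_2$ depends on $\theta_{xx}$, of the same order as $\mathcal W$. Since $F_i$ is homogeneous of degree $2$, the coefficients of the linearized operator are \emph{linear} in $(y^{(n)},\theta^{(n)})$ and their derivatives, so, combined with Hypothesis~\ref{hyp1}, as long as the iterates stay in a small ball of the low--regularity norm the perturbed top--order coefficients $b(x)+\partial_{y_{xx}}F_1(u^{(n)})$ and $c(x)+\partial_{\theta_{xx}}F_2(u^{(n)})$ remain uniformly elliptic --- this is the role of the balls $B_R(H^{6}\times H^{4})$ and $B_R(H^{11/2}\times H^{9/2})$, with $R$ depending only on $\mathtt{c}_2$ and $F_2$.

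The core is the \emph{a priori} estimate for the linearized coupled beam--wave system: for data $(h,h_t)\in H^{\sigma+1}\times H^{\sigma-1}$, $(k,k_t)\in H^{\sigma+\frac12}\times H^{\sigma-\frac12}$ with $\sigma\ge\mathfrak s$ and elliptic top--order coefficients, one wants
\[
\norm{(h,h_t)(t)}_{H^{\sigma+1}\times H^{\sigma-1}}+\norm{(k,k_t)(t)}_{H^{\sigma+\frac12}\times H^{\sigma-\frac12}}\ \le\ e^{C\int_0^t\mathtt M(\tau)\,\di\tau}\big(\text{data}+\text{forcing}\big),
\]
with $\mathtt M$ controlled only by the low ($H^{\mathfrak s}$--type) norms of the coefficients. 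The naive energy loses derivatives through the commutators $[\jap{\partial_x}^{\sigma},b]\,\partial_x^4$ and $[\jap{\partial_x}^{\sigma},c]\,\partial_x^2$; following the strategy announced in the abstract, I would cure this by building a \emph{modified energy} through paradifferential conjugations in the $\opbw$ calculus: first a time--dependent diffeomorphism of $\T$ --- together, for the beam, with a multiplicative conjugation --- flattening the top--order coefficients to $x$--independent symbols, which then commute with $\jap{\partial_x}^{\sigma}$; then finitely many further conjugations removing the sub--principal orders down to order $0$, so that the transformed generators are self--adjoint with $x$--independent symbol at all orders that matter. The time--derivative of the $H^{\sigma}$--norm of the resulting good unknowns then contains only lower--order remainders (the time--dependence of the coefficients and of the transformations contributing transport--type, hence energetically harmless, terms) plus the forcing, and Gronwall closes the estimate. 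The coupling is handled jointly: the beam principal part $b\,\partial_x^4$ involves only $y$, whereas the order of $\mathcal W$ is reached also by the couplings of $y$ into the wave equation, so the structure is essentially triangular; I would close the beam estimate first and then the wave one, absorbing the couplings of $y$ (of order at most that of $\mathcal W$) either by integration by parts in $x$ --- using the $H^{s+1}$--regularity of $y$ --- or, for a possible genuinely top--order contribution such as $\partial_{y_{xx}}F_2(u^{(n)})\,\partial_x^2 y$, by a small off--diagonal correction of the paradifferential symmetrizer (small since its coefficient is $O(\norm{u^{(n)}}_{\text{low}})$), so that no derivatives are lost and the modified energy stays equivalent to $\norm{(h,h_t)}_{H^{\sigma+1}\times H^{\sigma-1}}^2+\norm{(k,k_t)}_{H^{\sigma+\frac12}\times H^{\sigma-\frac12}}^2$.

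From here the scheme closes in the standard way. A bootstrap shows the iterates remain in a fixed ball of $C^0([0,T],H^{s+1}\times H^{s+\frac12})\cap C^1([0,T],H^{s-1}\times H^{s-\frac12})$ on a common time $T$ depending only on the $H^{\mathfrak s+3}\times H^{\mathfrak s+1}$ and $H^{\mathfrak s+\frac52}\times H^{\mathfrak s+\frac32}$ norms of the data (and on $\alpha,\beta,\gamma,\delta,s$), the forcing $\gamma\mathtt f_b,\delta\mathtt f_w$ and the damping $\alpha y_t,\beta\theta_t$ being harmless perturbations. The differences $(y^{(n+1)}-y^{(n)},\theta^{(n+1)}-\theta^{(n)})$ solve a linear system of the same type with source bounded by the low norm of $(y^{(n)}-y^{(n-1)},\theta^{(n)}-\theta^{(n-1)})$ times the uniformly bounded high norms, so the same estimate at a lower regularity gives a contraction, hence convergence, in $C([0,T'],H^{s_0}\times H^{s_0-\frac12})$ for $T'$ small. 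Interpolating this low--norm convergence with the uniform high bounds yields convergence in all intermediate norms, which suffices to pass to the limit in the quadratic nonlinearities and obtain a solution of \eqref{sistemaponte}--\eqref{opastratti} of the stated regularity; the strong time--continuity at the endpoint space $C^0H^{s+1}\times H^{s+\frac12}$ is recovered by a Bona--Smith regularisation argument, which also yields the continuity of the data--to--solution map in the $H^{s+1}\times H^{s+\frac12}$ topology, while uniqueness follows from the low--norm stability estimate via Gronwall.

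I expect the main obstacle to be precisely the construction in the second paragraph: designing the paradifferential changes of variables and the joint symmetrizer that desingularise the coupled beam--wave linearized operator without any loss of derivatives, given the genuinely quasilinear $\theta_{xx}$--dependence of $F_2$ and the mixed Schr\"odinger/hyperbolic character of the two blocks, together with the bookkeeping that fixes the threshold $\mathfrak s$ and the exact half--integer Sobolev indices so that every coupling term and conjugation remainder is accounted for. The remaining steps are, by comparison, routine applications of the energy method and of Bona--Smith--type arguments.
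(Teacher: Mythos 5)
Your overall strategy coincides with the paper's (complex first--order reformulation, paradifferential conjugation to build a modified energy, Kato-type iteration with frozen coefficients, contraction in a low norm plus interpolation, Bona--Smith for endpoint continuity and continuity of the flow map), but there is one genuine gap in the central step. You declare the coupled system ``essentially triangular'' and propose to close the beam estimate first, treating only the coupling of $y$ into the wave equation (the term $\partial_{y_{xx}}F_2\,\partial_x^2 y$) as dangerous. This overlooks the symmetric coupling in the \emph{beam} equation: $F_1$ is allowed to depend on $\theta_{xx}$, and in the natural anisotropic scaling of Theorem \ref{thm:main1} (beam measured in $H^{s+1}\times H^{s-1}$, wave in $H^{s+\frac12}\times H^{s-\frac12}$) the term $\partial_{\theta_{xx}}F_1\,\theta_{xx}$ is of positive order $\tfrac12$, exactly like the wave-side coupling. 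Concretely, in the $H^{s-1}$ energy for $y_t$ this term requires $\|\theta_{xx}\|_{H^{s-1}}$, i.e.\ $\theta\in H^{s+1}$, which is half a derivative more than available, and integrating by parts only shifts the loss onto $y_t$; so the beam block cannot be closed on its own, and neither Cauchy--Schwarz nor the $H^{s+1}$ regularity of $y$ rescues it. In the paper this is precisely why, after passing to complex coordinates, \emph{both} off-diagonal entries of $\mathfrak{B}$ in \eqref{matricitotali} (the symbols $B_b$ and $B_w$ of order $\tfrac12$) are removed by the off-diagonal paradifferential correction $\mathtt{T}=(\mathtt{T}_1,\mathtt{T}_2)$ in Proposition \ref{prop:costruzioneMappa} before the modified energy \eqref{regnodinapoli} is introduced. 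The fix is of the same kind as the correction you already propose for the wave side, so the gap is repairable, but as written the ``triangular'' shortcut would fail for the class of nonlinearities covered by the theorem.

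Two smaller points of comparison. For the variable top-order coefficients you propose flattening by a time-dependent diffeomorphism of $\T$ (plus a multiplicative gauge for the beam); the paper instead diagonalizes the matrix structure with $S_b,S_w$, performs the gauge transformation $k(x)=\exp(2\lambda_b)$, and avoids any straightening by choosing the weights $\lambda_b^{\sigma}\xi^{2\sigma}$, $\lambda_w^{2\sigma}\xi^{2\sigma}$ in $\mathfrak{L}_{2\sigma}$, whose Poisson bracket with the diagonalized symbols vanishes; your route is viable but note that $b$ and $c$ would in general require \emph{different} diffeomorphisms for the two components, which complicates the transformed order-$\tfrac12$ couplings. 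Finally, you take the solvability of the linear frozen-coefficient problem for granted; the paper constructs it via a parabolic regularization $-\epsilon{\bf \Delta}$ with estimates uniform in $\epsilon$ (Lemma \ref{lem:lwp1reg}, Proposition \ref{stima-energia-totale}), a step worth acknowledging even if standard.
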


Let us make some comments on the main theorem.

%

\medskip

\noindent\textbf{Explicit condition on the radius $R$.} 
The radius of the ball of initial data for which we are able to prove 
the existence of classical solutions can be explicitly computed. 
Indeed this is determined by the following condition: 
there exists a constant $\mathtt{c}_3>0$ such that
\begin{equation}\label{explicitR}
c(x)+\partial_{h_6} F_2(h_1, h_2, h_3, h_4, h_5, h_6)>\mathtt{c}_3\,, 
\qquad \forall x\in \T,\,\,\forall |h_i|< R,\,\,i=1, \dots, 6\,.
\end{equation}
In a regime close to equilibrium, namely for \emph{small enough} initial data, 
the above conditions is verified. 
Since we are considering a model that describes the evolution 
of a bridge in a small torsional regime we expect this to be the typical situation in applications. 

Since we consider quadratic nonlinearities one cannot expect a \emph{global} ellipticity condition 
as in \cite{KVP,FI2018, FIJMPA}. This is the reason why we have to restrict to \emph{small} initial data.
We decided to consider quadratic nonlinearity which arises more naturally from bridges models (see for instance \cite{ArioliGazzola2017}).

\medskip

\noindent\textbf{Regularity of the solutions.}  
Our result provides the existence of \emph{classical} solutions. 
We tried to optimise to the best  (following our method)
the amount of Sobolev regularity required on the initial conditions.
However we do not know if such threshold is optimal.
Regarding the regularity of the solution map we show that it is continuous. We point out that, in the quasilinear context, one cannot expect high regularity of the flow map, as in the case of semilinear PDEs (for which we refer to \cite{cazenaveBook}). 
We mention \cite{MolSautTzv2001} and \cite{Tzvetkov2007}
for more details on this issue.

%
%
%
%

\medskip


\medskip

\noindent\textbf{Damping and forcing terms.} The damping terms 
$\alpha y_t$, $\beta \theta_t$ and forcing terms $\delta f(t)$, $\gamma g(t)$ 
are relevant in engineering applications.  
However our method is not affected by the presence of such terms. 
Then our result holds even in the case that the suspension bridge 
is considered as an isolated system $\alpha=\beta=\gamma=\delta=0$.  
The importance of this regime has been highlighted by Irvine, see \cite{Irvine}.

\medskip

\noindent\textbf{Boundary conditions.} 
%
In engineering applications to \emph{hinged} decks, 
one should study equations \eqref{sistemaponte}
on the interval $x\in [0,\pi]$ (or more in general $[0,L]$)
under the following boundary conditions:
\begin{equation}\label{conditionDirich}
y(t, 0)=y(t, \pi)=y_{xx}(t, 0)=y_{xx}(t, \pi)=0\,, 
\quad 
\theta(t, 0)=\theta(t, \pi)=0\,, \qquad x\in [0, \pi]\,.
\end{equation}
First of all we remark that the boundary conditions \eqref{conditionDirich} 
are equivalent to the Dirichlet boundary conditions
\begin{equation}\label{conditionDirich2}
y(t, 0)=y(t, \pi)=0\,, \quad \theta(t, 0)=\theta(t, \pi)=0\,, \qquad x\in [0, \pi]\,,
\end{equation}
for functions $y(x)$, $\theta(x)$, $x\in[0,\pi]$ admitting 
Fourier series expansion 
\begin{equation}\label{fousin}
y(t, x)=\sqrt{\frac{2}{\pi}} \sum_{n\geq 1} a_n \sin(n x)\,, 
\qquad 
\theta(t, x)=\sqrt{\frac{2}{\pi}} \sum_{n\geq 1} b_n \sin(n x)\,,
\end{equation}
provided that they are \emph{sufficiently regular} in $x$.
This means that, to get regular solutions of 
\eqref{sistemaponte}-\eqref{conditionDirich}
one can look for regular solutions of \eqref{sistemaponte}-\eqref{conditionDirich2}
of the form \eqref{fousin}.
The local well posedness result for the Dirichlet problem 
\eqref{sistemaponte}-\eqref{conditionDirich2} shall follow 
as a corollary of Theorem \ref{thm:main1}, 
under appropriate parity assumptions.\footnote{We remark that these assumptions arise naturally when 
one wants to perform a \emph{modal analysis}. 
For instance it is satisfied for models involving non-local nonlinearities \emph{\`a la} Kirchoff.} 
In other words, instead of studying directly the hinged problem \eqref{sistemaponte}-\eqref{conditionDirich},
we consider \eqref{sistemaponte} on $\mathbb{T}$ and look for \emph{odd} in $x$ regular solutions.
%
%
%
%
%
This is the content of the following result.
\begin{cor}[\bf{Dirichlet/hinged boundary conditions}]\label{coro1}
Under Hypothesis \ref{hyp1} assume that
the operators $\mathcal{B},\mathcal{W}$ preserve the subspaces of 
\emph{even}/\emph{odd} functions in $x\in \mathbb{T}$ \footnote{
$\mathcal{B},\mathcal{W}$ are said \emph{parity-preserving}.} and that
 \[
 \begin{aligned}
&F_i(x,-h_1,h_2,-h_3,-h_4,h_5,-h_6)
=-F_i(x,h_1,h_2,h_3,h_4,h_5,h_6)\,, 
\\& \forall x\in \T, h_i\in \R,\,\,i=1, \dots, 6\,.
\end{aligned}
\]
Then for any initial condition $(y_0,\theta_0)\in H^{s+1}\times H^{s+\frac12}$
\emph{odd} in $x\in \T$ the thesis of Theorem \ref{thm:main1}
holds and in addition the solution $(\theta(t),y(t))$
is odd with respect to $x$.
\end{cor}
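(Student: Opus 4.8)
The plan is to deduce the corollary from Theorem~\ref{thm:main1} by exploiting the invariance of the subspace of odd functions under the flow. First I would observe that, since $\cB$ and $\cW$ are parity-preserving and the $F_i$ satisfy the stated parity relation, the nonlinear vector field defining \eqref{sistemaponte} maps odd data to odd data: if $(y,\theta)$ is odd in $x$ then $y_x,\theta_x$ are even, $y_{xx},\theta_{xx}$ are odd, so $F_i(x,y,y_x,y_{xx},\theta,\theta_x,\theta_{xx})$ is odd; the operators $\cB y$ and $\cW \theta$ are odd by the parity-preserving hypothesis; and the time-dependent forcing terms $\gamma\mathtt{f}_b(t),\delta\mathtt{f}_w(t)$ are spatially constant, hence must be discarded in the odd setting (or one notes that they do not affect the odd projection — more precisely, for the Dirichlet/hinged problem one takes $\gamma=\delta=0$, or replaces these by odd-in-$x$ forcings). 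Thus the closed subspace
\[
\cO^s:=\set{(y,\theta)\in H^{s+1}\times H^{s+\frac12} : y,\theta \text{ odd in } x}
\]
is invariant for the equation at the level of formal/smooth solutions.

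Next I would invoke the uniqueness part of Theorem~\ref{thm:main1}. Given odd initial data $(y_0,y_1,\theta_0,\theta_1)$ (with $y_1,\theta_1$ automatically odd since they are the time derivatives at $t=0$ of odd-in-$x$ solutions, or simply assumed odd), Theorem~\ref{thm:main1} produces a unique solution $(y(t),\theta(t))$ in the stated regularity class. To see this solution is odd, let $\cR$ denote the reflection $x\mapsto -x$ and set $\tilde y(t,x):=-y(t,-x)$, $\tilde\theta(t,x):=-\theta(t,-x)$. Because $\cB,\cW$ are parity-preserving and the $F_i$ obey the sign relation in the hypothesis, $(\tilde y,\tilde\theta)$ also solves \eqref{sistemaponte} with the same (odd) initial data and in the same function space; by the uniqueness statement, $(\tilde y,\tilde\theta)=(y,\theta)$, i.e. the solution is odd in $x$ for all $t\in[0,T)$. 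Since $y_1,\theta_1$ being odd makes the $H^6\times H^4$ and $H^{11/2}\times H^{9/2}$ smallness balls in Theorem~\ref{thm:main1} just as accessible for odd data, and odd functions of the given Sobolev regularity automatically satisfy the expansion \eqref{fousin}, restricting the obtained solution to $x\in[0,\pi]$ yields a solution of the hinged problem \eqref{sistemaponte}-\eqref{conditionDirich}; the equivalence of \eqref{conditionDirich} and \eqref{conditionDirich2} for sufficiently regular odd functions (noted in the excerpt) closes this translation.

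For the continuity of the solution map, I would note that the odd subspace $\cO^s$ is closed in $H^{s+1}\times H^{s+\frac12}$, so the continuous dependence already established in Theorem~\ref{thm:main1} restricts to a continuous map on odd data with values in odd solutions; no new estimates are needed. The only genuinely delicate point — and the one I expect to require the most care — is the bookkeeping of the \emph{forcing and damping terms} and of the initial velocities: one must either assume $\gamma=\delta=0$ (as the excerpt's discussion of the isolated system permits), or allow spatially odd forcing functions, to keep $\cO^s$ invariant; and one should check that the smallness hypotheses on $(y_0,y_1,\theta_0,\theta_1)$ in Theorem~\ref{thm:main1} are compatible with, and not strengthened by, the oddness constraint — which they are, since intersecting with a closed subspace only shrinks the admissible set. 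Everything else is a direct application of the uniqueness and continuity already proved.
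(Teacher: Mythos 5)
Your proof is correct and follows essentially the same route as the paper's (one-line) proof: the parity assumptions make the vector field of \eqref{sistemaponte} preserve the subspace of odd functions, and the conclusion then follows from the existence, uniqueness and continuity statements of Theorem \ref{thm:main1}, your reflection-plus-uniqueness argument being the natural way to spell this out. Your additional remark that the spatially constant forcings $\gamma\mathtt{f}_{b}(t)$, $\delta\mathtt{f}_{w}(t)$ must vanish (or be replaced by odd-in-$x$ forcings) for the odd subspace to be invariant is a legitimate point of care that the paper's terse proof glosses over.
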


\begin{proof}
It follows by Theorem \ref{thm:main1} and the fact that the above
assumptions imply that the vector 
field in the right hand side of \eqref{sistemaponte}
preserves the subspace of odd functions of $x\in \T$.
\end{proof}

\medskip

\noindent\textbf{The energy method.}
Since the equations in system \eqref{sistemaponte} are quasilinear we cannot expect to 
obtain the existence of low regular solutions. 
Even the presence of non constant coefficients in the linear terms 
(see \eqref{opastratti}) prevents the use of classical approaches
based on standard fixed points arguments.
In the quasilinear context, the usual strategy is to provide more refined \emph{a priori} estimates on the solutions. 
We mention for instance the papers \cite{KPV2, KVP, MMT3}
for the quasilinear Schr\"odinger equation on the Euclidean space.
In these latter papers the dispersive properties of the linear flow have been somehow exploited.

On compact manifolds, where no dispersive effects are available, 
the situation is known to be more involved.
For instance there are 
very interesting examples given by Christ in \cite{Christ} of non 
Hamiltonian equations which are ill posed on the torus $\mathbb{T}$ and well posed on $\mathbb{R}$.
We mention 
(without trying to be exhaustive) the well posedness results 
\cite{FI2018, FIJMPA} for the Schr\"odinger equation on tori,  \cite{iandolikdv} for the Kortweg-deVries
(see also \cite{Berti-Maspero-Murgante:EK} for a different model).
In this paper we mostly follow the approach of \cite{FIJMPA}, based on a 
quasilinear iterative scheme  \emph{\`a la Kato} \cite{Kato:spectral}
starting from a \emph{paralinearized} version (see M\'etivier \cite{Metivier2008}) 
of the system \eqref{sistemaponte}.
The key point 
is to provide suitable a priori energy estimates
on the solutions of a linear paradifferential systems 
arising form \eqref{sistemaponte}. 
The extra difficulty, compared with respect to \cite{FIJMPA} is due to the fact that 
 \eqref{sistemaponte} is a system of \emph{coupled} equations. In order to apply the 
method developed in \cite{FIJMPA} we first rewrite  the system in complex coordinate \eqref{inverse}.
In this set of coordinates the coupling terms appear at order $1/2$ (see the {paradifferential} operator 
$\mathfrak{B}$ in \eqref{ponteparato}-\eqref{matricitotali}). 
This coupling at \emph{unbounded} orders is due to the presence 
of the second order derivatives $y_{xx}$, $\theta_{xx}$ in the nonlinearities in \eqref{sistemaponte}.
To obtain energy estimates one has to \emph{decouple},  the system, at least, at positive orders.
This is achieved through suitable paradifferential changes of coordinates 
performed in Sections \ref{sec:modene}-\ref{sec:linear}.

\subsection{A model for suspension bridges}\label{sec:derivation}
%

%


There is a wide literature concerning the derivation of mathematical models for suspension bridges and the study of its dynamics through modal analysis, mainly by using numerical simulations. {The first study on suspension bridges dates back to Navier \cite{Navier}. In the recent years, the collapse of some suspension bridges due to aerodynamic instability combined with uncontrolled oscillations (see for a wide overview on the topic \cite{Gazzola:book}), 
raised fundamental questions in both mathematicians and engineers.
In this context, a great interest has been addressed to the explanation of the famous collapse of the Tacoma Bridge in 1940. 
Still today, there is no common agreement on the explanation of the causes of the collapse.

 Vortex shedding, flutter theory or parametric resonance are some of the aeroelastic effects that were believed to be at the origin of these phenomena \cite{Butikov, Herm, JENKINS2013167}.

A more recent research line \cite{ArioliGazzola2015, ArioliGazzola2017, BerchioGazzola2015} explains the collapse as a phenomenon of structural instability: exchanges of energy among internal (longitudinal and torsional) modes may cause catastrophic effects. In this context, usually, one considers the suspension bridges as isolated systems, ignoring damping terms, as suggested by Irvine in \cite{Irvine}.

%
 

The possibility of exchanges of energy among the two components of the deck suggests that the bridge behaves \emph{nonlinearly}, which is
nowadays commonly agreed,  see for instance \cite{Brownjohn1994, Gazzola2013, Lacarbonara2013, McKenna-Walter1987}
and references therein.  The refined model proposed in \cite{ArioliGazzola2017} takes into account the nonlinear restoring 
action of the cables-hangers system on the deck; 
considering \emph{deformable cables} and \emph{hangers} 
makes the descriptive system of equations \emph{quasilinear} (in the sense explained above).  {As we said, this complicates significantly the study of the well posedness of the Cauchy problem, and, at the best of our knowledge, Theorem \ref{thm:main1} is the first result of well posedness for a quasilinear system of PDEs modelling suspension bridges.}.  

The aim of the present paper is to propose a general method, based on microlocal analysis and paradifferential calculus technique, that enable one to treat general system as, for instance, the one considered in
\cite{ArioliGazzola2017}. }
Let us show that the system of equations derived in that paper 
has the form \eqref{sistemaponte}. 
We write in detail the linear part of equations $(6)$-$(7)$ in \cite{ArioliGazzola2017}, 
while we leave the nonlinear terms in implicit form. 
We have
\begin{equation}\label{systemAG}
\begin{aligned}
(M+2 \,m \, \xi(x)) \,y_{tt}&=-EI \,y_{xxxx}+\partial_x \Big(\frac{2\,H_0}{\xi(x)^2}\,y_x \Big)
+\mathcal{N}^{(y)}(y,y_x, y_{xx}, \theta, \theta_x, \theta_{xx})\,,
\\
\Big(\frac{M}{3}+2\,m\,\xi(x) \Big) \,\theta_{tt}&=\frac{GK}{\ell^2} \,\theta_{xx}
+\partial_x\Big(\frac{2\,H_0}{\xi(x)^2}\,\theta_{x}\Big)
+\mathcal{N}^{(\theta)}(y,y_x, y_{xx}, \theta, \theta_x, \theta_{xx})\,,
\end{aligned}
\end{equation}
where
\begin{itemize}
\item[$\bullet$] $M$ is the linear density of the mass of the deck;
 \item[$\bullet$] $m$ is the linear density of the mass of the cable;
 \item[$\bullet$] $\xi=\xi(x)$ represents the local length of the cable 
 at rest and $H_0\,\xi$ is the tension of the cable at rest;
 \item[$\bullet$] $E$ is the Young modulus and $I$ is the linear density 
 of the moment of inertia of the cross section;
 \item[$\bullet$] $G$ is the shear modulus of the deck;
 \item[$\bullet$] $K$ is the torsional constant of the deck.
  \end{itemize}
Dividing by $M+2 \,m \, \xi$ the equation for the $y$ variable, by $M/3+2 \,m \, \xi$ 
the equation for the $\theta$ variable and 
by scaling time the system \eqref{systemAG} takes the form of \eqref{sistemaponte}. 
We observe that the coefficients $b(x)$ and $c(x)$ in \eqref{opastratti} 
are given by
\[
b(x)=\frac{EI}{M+2m\,\xi}>0, \qquad
c(x)=\frac{3 G K }{\ell^2 (M+6 m\,\xi)}+\frac{6 H_0}{ \xi^2 (M+6 m\,\xi)}>0\,.
\]
Hence the Hypothesis \ref{hyp1} is satisfied. We observe that the extra assumption required in Corollary \ref{coro1} is not satisfied by the specific nonlinearities $\mathcal{N}^{(y)}, \mathcal{N}^{(\theta)}$ considered in \cite{ArioliGazzola2017}. However we believe that this assumption is quite natural, especially if one wants to perform an efficient modal analysis.


\section{Functional setting and Paradifferential calculus}
Let $s\in \R$. 
We expand a function $ u(x) $, $x\in \mathbb{T}$, 
 in Fourier series as 
\begin{equation}\label{complex-uU}
u(x) =
\sum_{n \in \Z } \hat{u}(n)\,e^{\ii n x } \, , 
\qquad 
\hat{u}(n) := \frac{1}{2\pi} \int_{\mathbb{T}} u(x) e^{-\ii n  x } \, dx \, .
\end{equation}
We introduce the Sobolev spaces  
\begin{equation}\label{spazioHHss}
H^{s}=H^{s}(\T;\C):=\Big\{ u(x)\in L^{2}(\T;\C)\,:\, 
\| u\|_{H^{s}}^2:=\sum_{j\in \mathbb{Z}} |\hat{u}(j)|^2 \langle j \rangle^{2s}<+\infty \Big\}\,
\end{equation}
and the Japanese bracket is defined by $\langle j \rangle:=\sqrt{1+j^2}$.
We introduce the Fourier multiplier $\langle D\rangle$ defined by linearity as
\begin{equation}\label{Foulangle}
\langle D\rangle e^{\ii jx}=\langle j\rangle e^{\ii jx}\,,\quad \forall\, j\in\Z\,.
\end{equation}
Notice that the norm $\|\cdot\|_{H^{s}}$ can be written as
\[
\|u\|_{H^s}^{2}=(\langle D\rangle^{2s} u, u)_{L^{2}}\,,
\]
where
\begin{equation}\label{scalarproScalare}
(u,v)_{L^{2}}=\frac{1}{2\pi}\int_{\mathbb{T}} u\cdot \bar{v}dx\,,\quad \forall u,v\in L^{2}(\mathbb{T};\C)\,,
\end{equation}
is the standard complex $L^2$-scalar product.
We introduce also the product  spaces 
\begin{equation}\label{RealSobolev}
\mathcal{H}^s=\mathcal{H}^s(\T; \mathbb{C})=\{ U=(u^+, u^-)\in 
H^s\times H^s : u^+=\overline{u^-} \}\,.
\end{equation}
With abuse of notation we shall denote by $\|\cdot\|_{H^{s}}$ the 
natural norm of the product space $\mathcal{H}^{s}$.
On the space $ \mathcal{H}^s$
we naturally extends the scalar product \eqref{scalarproScalare} as
\begin{equation}\label{scalarprod2x2}
(Z,W)_{L^{2}\times L^{2}}:={\rm Re}(z,w)_{L^{2}}=\frac{1}{4\pi}\int_{\T} z\cdot \bar{w}+\bar{z}\cdot w dx\,,
\qquad Z=\vect{z}{\bar{z}}\,,\; W=\vect{w}{\bar{w}}\in \mathcal{H}^{0}\,.
\end{equation}
We define the matrices of operators
\[
\mathcal{D}:=\left(\begin{matrix}\langle D\rangle & 0 \\ 0 & \langle D\rangle \end{matrix}\right)\,,
\qquad 
\mathfrak{D}:=\left(\begin{matrix}\mathcal{D} & 0 \\ 0 & \mathcal{D} \end{matrix}\right)\,.
\]
We notice that 
\[
\|Z\|_{H^{s}}^{2}:=(\mathcal{D}^{2s}Z,Z)_{L^{2}\times L^{2}}\,,\qquad \forall\,Z\in \mathcal{H}^{s}\,,
\]
and (again with abuse of notation) we set
\[
\|V\|_{H^{s}}^{2}:=\langle\mathfrak{D}^{2s}V,V \rangle:=\|Z\|_{H^{s}}^2+\|W\|_{H^{s}}^{2}
\,,
\qquad \forall\, 
V=\vect{Z}{W}\in \mathcal{H}^{s}\times\mathcal{H}^s\,,
\]
where
\begin{equation}\label{scalarprod4x4}
\langle V_1,V_2\rangle:=(Z_1,Z_2)_{L^{2}\times L^{2}}+(W_1,W_2)_{L^{2}\times L^{2}}\,,
\;\;
V_1=\vect{Z_1}{W_1}\,,V_2=\vect{Z_2}{W_2}\in \mathcal{H}^{0}\times\mathcal{H}^0\,.
\end{equation}

\medskip

\noindent
\textbf{Notation}.
We use the notation $f\lesssim g$ to denote inequalities 
$f\le C g$ for some $C>0$. 

\noindent
If we want to highlight the 
dependence of the previous constants on some parameter $s$ 
(as the index of the Sobolev regularity) then we write $f \lesssim_s g$.

\medskip

\noindent
We recall the following classical tame estimates on Sobolev spaces: for all $s\geq s_0>1/2$
\begin{itemize}
\item[$\bullet$] \emph{Tame product}:
\begin{equation}\label{tame}
\|uv\|_{H^{s}}\lesssim\|u\|_{H^{s}}\|v\|_{H^{s_0}}+\|u\|_{H^{s_0}}\|u\|_{H^{s}}\,.
\end{equation} 
\item[$\bullet$] \emph{Interpolation estimate}:
\begin{equation}\label{intorpolostima}
\|u\|_{H^{s}}\leq \|u\|^{\theta}_{H^{s_1}}\|u\|^{1-\theta}_{H^{s_2}}\,,
\qquad \theta\in[0,1]\,,\;\;0\leq s_1\leq s_2\,,\quad s=\theta s_1+(1-\theta)s_2\,.
\end{equation}
\end{itemize}

\noindent
We now recall some results concerning the paradifferential calculus. 
We  follow \cite{Berti-Maspero-Murgante:EK}. 

\begin{definition}\label{def:simbolini}
Given $m,s\in\mathbb{R}$ we denote by $\Gamma^m_s$ 
the space of functions $a(x,\xi)$ defined on $\T\times \R$ 
with values in $\C$, which are $C^{\infty}$ 
with respect to the variable $\xi\in\R$ and such that for any $\beta\in \N\cup\{0\}$,
there exists a constant $C_{\beta}>0$ such that 
\begin{equation}\label{stima-simbolo}
\|\partial_{\xi}^{\beta} a(\cdot,\xi)\|_{H^{s}}
\leq C_{\beta}\langle\xi\rangle^{m-\beta}\,, 
\quad \forall \xi\in\R.
\end{equation}
The elements of $\Gamma_s^m$ are called \emph{symbols} of order $m$.
\end{definition}
We endow the space $\Gamma^m_{s}$ with the family of {semi}-norms 
\begin{equation}\label{seminorme}
|a|_{m,s,n}:=\max_{\beta\leq n}\, 
\sup_{\xi\in\R}\|\langle\xi\rangle^{\beta-m}\partial^\beta_\xi a(\cdot,\xi)\|_{H^{s}}\,.
\end{equation}
Consider a function $\chi\in C^{\infty}(\R,[0,1])$ such that 
\[
\chi(\xi)=\begin{cases}
1 \qquad \mathrm{if}\,\,|\xi|\leq 1.1,\\
0 \qquad \mathrm{if}\,\,|\xi|\geq 1.9.
\end{cases}
\]
Let $\epsilon\in(0,1)$ and define 
$\chi_{\epsilon}(\xi):=\chi(\xi/\epsilon).$ 

\begin{definition}
Given a symbol $a(x,\x)$ in $\Gamma^m_s$
we define its Weyl and Bony-Weyl
quantization respectively as 
\begin{equation}\label{quantiWeylcl}
T_{a}h:=\opw(a(x,\xi))h:=\sum_{j\in \mathbb{Z}}e^{\ii j x}
\sum_{k\in\mathbb{Z}}
\widehat{a}\left(j-k,\frac{j+k}{2}\right)\widehat{h}(k),
\end{equation}
\begin{equation}\label{quantiWeyl}
\opbw(a(x,\xi))h:=\sum_{j\in \mathbb{Z}}e^{\ii j x}
\sum_{k\in\mathbb{Z}}
\chi_{\epsilon}\left(\frac{|j-k|}{\langle j+k\rangle}\right)
\widehat{a}\left(j-k,\frac{j+k}{2}\right)\widehat{h}(k).
\end{equation}
\end{definition}
In the following we list a series of technical lemmata that we repeatedly use along the paper.

The following is Theorem $2.4$ in \cite{Berti-Maspero-Murgante:EK}  and 
concerns the action of paradifferential operators on Sobolev spaces.
\begin{theorem}{\bf (Action on Sobolev spaces).}\label{azione}
Let $s_0>1/2$, $m\in\R$ and $a\in\Gamma^m_{s_0}$. Then $\opbw(a)$ 
extends as a bounded operator from ${H}^{s}(\T;\C)$ to ${H}^{s-m}(\T;\C)$, 
for all $s\in\R$, with the following estimate  
\begin{equation}\label{ac1}
\|\opbw(a)u\|_{H^{s-m}}\lesssim |a|_{m,s_0,4}\|u\|_{H^s} \qquad \forall u\in{H}^s\,.
\end{equation}
Moreover for any $\rho\geq 0$ we have 
\begin{equation}\label{ac2}
\|\opbw(a)u\|_{H^{s-m-\rho}}\lesssim |a|_{m,s_0-\rho,4}\|u\|_{H^s} \qquad {u}\in {H}^s\,.
\end{equation}
\end{theorem}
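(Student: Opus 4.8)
The plan is to prove the estimate directly on the Fourier side and then reduce it to a Schur test, the Bony cutoff doing all the work of spectral localization. From \eqref{quantiWeyl}, for $u\in H^s$ the Fourier coefficients of $\opbw(a)u$ are
\[
\wh{\opbw(a)u}(j)=\sum_{k\in\Z}\chi_{\epsilon}\Big(\tfrac{|j-k|}{\jap{j+k}}\Big)\,\wh{a}\Big(j-k,\tfrac{j+k}{2}\Big)\,\wh{u}(k)\,,\qquad j\in\Z\,.
\]
The cutoff restricts the sum to $|j-k|\le 1.9\,\epsilon\,\jap{j+k}$, and for $\epsilon$ small enough (as we may assume) this forces $\jap j\simeq\jap k\simeq\jap{j+k}\simeq\jap{(j+k)/2}=:\lambda$ and also $\jap{j-k}\lesssim\lambda$; hence, for any $\rho\ge0$, on the support of the cutoff $\jap j^{\,s-m-\rho}\jap k^{-s}\lesssim\lambda^{-m}\jap{j-k}^{-\rho}$. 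Setting $c_k:=\jap k^{s}|\wh u(k)|$ (so $\|c\|_{\ell^2}=\|u\|_{H^s}$) and
\[
G(j,k):=\mathbf{1}_{\{|j-k|\le 1.9\epsilon\jap{j+k}\}}\,\jap{j-k}^{-\rho}\,\lambda^{-m}\,\Big|\wh a\big(j-k,\tfrac{j+k}{2}\big)\Big|\,,
\]
we get $\jap j^{\,s-m-\rho}\,|\wh{\opbw(a)u}(j)|\lesssim\sum_k G(j,k)\,c_k$, so $\|\opbw(a)u\|_{H^{s-m-\rho}}\lesssim\|G\|_{\mathcal L(\ell^2)}\,\|u\|_{H^s}$. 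It therefore suffices to show $\|G\|_{\mathcal L(\ell^2)}\lesssim|a|_{m,s_0-\rho,4}$, which for $\rho=0$ is \eqref{ac1} and for general $\rho\ge0$ is \eqref{ac2}.

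For the operator norm of $G$ I would use the Schur test: it is enough to bound $\sum_k G(j,k)$ uniformly in $j$ and $\sum_j G(j,k)$ uniformly in $k$. Treat the first (the second is identical with the roles of $j$ and $k$ exchanged). Changing the summation index to $n:=j-k$, so the second argument of $\wh a$ becomes $\xi_n:=j-\tfrac n2$ and on the support $|n|\lesssim\jap j$ with $\jap{\xi_n}\simeq\jap j$, one has
\[
\sum_k G(j,k)\ \lesssim\ \jap j^{-m}\sum_{|n|\lesssim\jap j}\jap n^{-\rho}\,\big|\wh a(n,\xi_n)\big|\,.
\]
Writing $\jap n^{-\rho}=\jap n^{-s_0}\jap n^{\,s_0-\rho}$ and applying Cauchy--Schwarz in $n$, the right-hand side is $\lesssim\jap j^{-m}\big(\sum_n\jap n^{-2s_0}\big)^{1/2}\big(\sum_n\jap n^{\,2(s_0-\rho)}|\wh a(n,\xi_n)|^2\big)^{1/2}$, and the first factor is finite precisely because $s_0>1/2$.

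It remains to estimate $\sum_n\jap n^{\,2(s_0-\rho)}|\wh a(n,\xi_n)|^2$, and this is the one point requiring care: \eqref{stima-simbolo} controls $\sum_n\jap n^{\,2s}|\wh a(n,\xi)|^2=\|a(\cdot,\xi)\|_{H^{s}}^2\lesssim|a|_{m,s,0}^2\jap\xi^{2m}$ only for fixed $\xi$, whereas here the base frequency $\xi_n$ slides with the summation index $n$. However, all the $\xi_n$ lie in an interval $J_j$ of length $|J_j|\simeq\jap j$ centered at $j$, on which $\jap\xi\simeq\jap j$. By the one-dimensional Sobolev inequality on $J_j$, for each $n$
\[
\big|\wh a(n,\xi_n)\big|^2\le\sup_{\xi\in J_j}\big|\wh a(n,\xi)\big|^2\lesssim|J_j|^{-1}\!\int_{J_j}\!\big|\wh a(n,\xi)\big|^2 d\xi+|J_j|\!\int_{J_j}\!\big|\partial_\xi\wh a(n,\xi)\big|^2 d\xi\,;
\]
summing against $\jap n^{\,2(s_0-\rho)}$ and using \eqref{stima-simbolo} in the form $\|\partial_\xi^\beta a(\cdot,\xi)\|_{H^{s_0-\rho}}\le|a|_{m,s_0-\rho,\beta}\jap\xi^{m-\beta}$ together with $\jap\xi\simeq\jap j$ on $J_j$,
\[
\begin{aligned}
\sum_n\jap n^{\,2(s_0-\rho)}\big|\wh a(n,\xi_n)\big|^2
&\lesssim\ \jap j^{-1}\!\int_{J_j}\!\jap\xi^{2m}\,d\xi\;|a|_{m,s_0-\rho,0}^2+\jap j\!\int_{J_j}\!\jap\xi^{\,2m-2}\,d\xi\;|a|_{m,s_0-\rho,1}^2\\
&\lesssim\ \jap j^{\,2m}\,|a|_{m,s_0-\rho,1}^2\,.
\end{aligned}
\]
Plugging this back gives $\sum_k G(j,k)\lesssim|a|_{m,s_0-\rho,1}\le|a|_{m,s_0-\rho,4}$ uniformly in $j$, the symmetric computation gives the same for $\sum_j G(j,k)$, and Schur's test yields $\|G\|_{\mathcal L(\ell^2)}\lesssim|a|_{m,s_0-\rho,4}$, proving \eqref{ac1} and \eqref{ac2}.

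The genuinely delicate step is the sliding-base-frequency bound of the last display: the cutoff $\chi_\epsilon$ makes the localization $\jap j\simeq\jap k\simeq\jap{j+k}$ automatic and turns the rest into a one-line Schur estimate, but one must exploit regularity of the symbol in $\xi$ — here one $\xi$-derivative already suffices, so the seminorm $|a|_{m,s_0-\rho,4}$ appearing in \eqref{stima-simbolo} is comfortably enough — in order to re-center $\xi_n$ onto a fixed scale; everything else is routine bookkeeping.
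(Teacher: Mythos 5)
Your argument is correct, but note that it cannot be ``the same as the paper's'': the paper does not prove Theorem \ref{azione} at all, it simply quotes Theorem 2.4 of \cite{Berti-Maspero-Murgante:EK}. What you have supplied is a self-contained, more elementary substitute. The route in the literature (M\'etivier, and the proof behind the cited theorem) goes through dyadic/spectral localization: the cutoff $\chi_\epsilon$ guarantees that each frequency block of the output sees a symbol whose $x$-spectrum is much smaller than the block, and the action estimate is then reduced to operators with spectrally localized symbols. You instead work directly with the discrete kernel from \eqref{quantiWeyl} and run a Schur test, which in the periodic setting is perfectly adequate; your reduction $\jap j^{s-m-\rho}\jap k^{-s}\lesssim \jap{(j+k)/2}^{-m}\jap{j-k}^{-\rho}$ on the support of the cutoff, the splitting $\jap n^{-\rho}=\jap n^{-s_0}\jap n^{s_0-\rho}$ (which is exactly where $s_0>1/2$ enters and which yields \eqref{ac1} and \eqref{ac2} simultaneously), and the treatment of the sliding base point $\xi_n=j-n/2$ by the one-dimensional Sobolev inequality on an interval $J_j$ of length $\simeq\jap j$ centered at $j$ are all sound; the final bound by $|a|_{m,s_0-\rho,1}\le|a|_{m,s_0-\rho,4}$ is consistent with \eqref{seminorme} and \eqref{stima-simbolo}, and shows in passing that only one $\xi$-derivative is needed for this particular statement. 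One point deserves to be made explicit rather than parenthetical: your ``$\epsilon$ small enough (as we may assume)'' is genuinely needed, not cosmetic. With the thresholds $1.1/1.9$ in the definition of $\chi$, the support condition $|j-k|\le 1.9\,\epsilon\,\jap{j+k}$ forces $\jap j\simeq\jap k\simeq\jap{j+k}$, and keeps all the $\xi_n$ in an interval around $j$ on which $\jap\xi\simeq\jap j$, only when $1.9\,\epsilon$ is safely below $1$ (say $\epsilon\le 1/4$); for $\epsilon$ close to $1$ the quantization as written would allow, e.g., the zero mode of $u$ to feed arbitrarily high output frequencies and the statement would degenerate. Assuming such a smallness of $\epsilon$ is the standard convention implicit in \cite{Berti-Maspero-Murgante:EK}, so this is a caveat to state, not a gap.
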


\begin{remark}\label{rmk:bonynonbony}
Given $a\in \Gamma_{s}^{m}$, 
we remark that the operator
\[
R:=\opw(a(x,\x))-\opbw(a(x,\x))
\]
is bounded from $H^{s}\to H^{s+\rho}$ for any $\rho>0$.
Moreover for any $s\in\R$ we have
\[
\|R\|_{\mathcal{L}(H^s;H^{s+\rho})}\lesssim_{s,\rho}|a|_{m,s+\rho,4+\rho}\,.
\]

\end{remark}

\begin{definition}
Let $\rho\in (0, 2]$. Given two symbols $a\in \Gamma^m_{s_0+\rho}$ and $b\in \Gamma^{ m'}_{s_0+\rho}$, we define
\begin{equation}\label{cancelletto}
a\#_{\rho} b= \begin{cases}
ab \quad \qquad \quad \quad\,\, \rho\in(0,1]\\
ab+\frac{1}{2\ii}\{a,b\}\quad \rho\in (1,2],\\
\end{cases}
\end{equation}
where we denoted by $\{a,b\}:=\partial_{\xi}a\partial_xb-\partial_xa\partial_{\xi}b$ the Poisson's bracket between symbols.
\end{definition}
\begin{remark}\label{simmetrie}
We observe that 
\[
ab\in\Gamma^{m+m'}_{s_0+\rho}, \quad \{a,b\}\in\Gamma^{m+m'-1}_{s_0+\rho-1}.
\] 
Moreover $\{a,b\}=-\{b,a\}$. 
\end{remark}

The following is Theorem $2.5$ in \cite{Berti-Maspero-Murgante:EK}. This  result concerns the symbolic calculus for the composition of Bony-Weyl paradifferential operators. 
\begin{theorem}{\bf (Composition).}\label{compo}
Let $s_0>1/2$, $m,m'\in\mathbb{R}$, $\rho\in(0,2]$ and 
$a\in\Gamma^m_{s_0+\rho}$, $b\in\Gamma^{m'}_{s_0+\rho}$. We have 
\[
\opbw(a)\circ\opbw(b)=\opbw(a\#_{\rho}b)+R^{c}_{\rho}(a,b)\,,
\]
 where the linear operator $R^{-\rho}$ maps ${H}^s(\T)$ into ${H}^{s+\rho-m-m'}$, 
 for any $s\in\R$.
 
 Moreover it satisfies the following estimate, for all $u\in {H}^s$,
\begin{equation}\label{resto}
\|R^{c}_{\rho}(a,b)u\|_{{H}^{s-(m+m')+\rho}}
\lesssim_s 
(|a|_{m,s_0+\rho,N}|b|_{m',s_0,N}+|a|_{m,s_0,N}|b|_{m',s_0+\rho,N})\|u\|_{{H}^s},
\end{equation}
where $N\geq 7$.\end{theorem}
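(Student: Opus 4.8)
\textbf{Proof plan for Theorem \ref{compo} (Composition).}

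The plan is to reduce the composition theorem to the action theorem (Theorem \ref{azione}) together with an explicit symbol expansion at the level of Fourier coefficients. First I would write both operators in their full Weyl quantization using \eqref{quantiWeyl}, so that $\opbw(a)\circ\opbw(b)$ becomes a double sum over intermediate frequencies $k$: for $u$ with Fourier coefficients $\widehat u(\ell)$ one gets, for each output mode $j$,
\[
\widehat{(\opbw(a)\opbw(b)u)}(j)=\sum_{k,\ell}\chi_\epsilon\Big(\tfrac{|j-k|}{\langle j+k\rangle}\Big)\chi_\epsilon\Big(\tfrac{|k-\ell|}{\langle k+\ell\rangle}\Big)\widehat a\Big(j-k,\tfrac{j+k}{2}\Big)\widehat b\Big(k-\ell,\tfrac{k+\ell}{2}\Big)\widehat u(\ell).
\]
The idea is to compare this with the symbol $a\#_\rho b$ quantized the same way. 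Setting the total frequency increment $n:=j-\ell$, one expands the symbol $b(x,(k+\ell)/2)$ around the "centered" point $(j+\ell)/2$ using Taylor's formula in $\xi$; since $k$ ranges near the midpoint, the displacement is of size $|j-k|$, which is controlled by the cutoffs. The zeroth order term reproduces $ab$ quantized at the right point, the first order term produces $\tfrac{1}{2\ii}\{a,b\}$ (the factor $\tfrac{1}{2}$ coming from the Weyl midpoint convention, the Poisson bracket structure from differentiating $b$ in $\xi$ and picking up $j-k$ which becomes an $x$-derivative on $b$ after inverse Fourier transform), and the remainder is a Taylor integral term of one order higher.

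Concretely the key steps, in order, are: (i) rewrite the composition via \eqref{quantiWeyl} as a triple sum and localize, using that the product of the two cutoffs forces $|j-k|\lesssim\langle j+\ell\rangle$ and $|k-\ell|\lesssim\langle j+\ell\rangle$, hence $|j-k|,|k-\ell|\lesssim\langle j-\ell\rangle+\langle 2\ell\rangle$ is comparable to the "spatial frequency" of the composed symbol; (ii) Taylor-expand $\widehat b(k-\ell,(k+\ell)/2)$ in the second slot around $(j+\ell)/2$ to order $\lceil\rho\rceil$, so to order $0$ when $\rho\in(0,1]$ and to order $1$ when $\rho\in(1,2]$; (iii) recognize the leading term(s) as the Bony-Weyl quantization of $a\#_\rho b$ plus a smoothing operator coming from replacing the two product cutoffs by a single cutoff adapted to frequency $j-\ell$ (this replacement error is smoothing of any order by a standard argument, since it is supported where $|j-\ell|\gtrsim\langle j+\ell\rangle$); (iv) estimate the Taylor remainder: it has the form of a paradifferential operator whose symbol is $\partial_\xi^{\lceil\rho\rceil}a$ against $\partial_x^{\lceil\rho\rceil}b$ integrated over the Taylor parameter, carrying $\lceil\rho\rceil$ extra powers of $\langle\xi\rangle^{-1}$ relative to the naive order $m+m'$, but with a loss of $\lceil\rho\rceil-\rho$ regularity in $x$ on one of the two factors — which is exactly why the final estimate \eqref{resto} distributes the $s_0+\rho$ regularity asymmetrically between $a$ and $b$. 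Then I would apply Theorem \ref{azione} (action on Sobolev spaces) to this remainder symbol to obtain the bound \eqref{resto} with the seminorm index $N$ large enough (taking $N\geq 7$ accommodates the four $\xi$-derivatives required by \eqref{ac1} plus the up-to-$2$ derivatives consumed by the Taylor expansion plus a margin).

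The main obstacle will be step (iv): controlling the Taylor remainder with the sharp regularity bookkeeping. The delicate points are that differentiating $\widehat b$ twice in $\xi$ must be matched against only $\rho$ (not $2$) derivatives' worth of regularity on $b$, so one has to interpolate or, more precisely, organize the remainder so that the "extra" $2-\rho$ derivatives fall on whichever of $a,b$ one is willing to pay for — this is the source of the two symmetrized terms in \eqref{resto}. A secondary technical nuisance is justifying the smoothing nature of the cutoff-replacement error in step (iii): one shows that on the region where the single composed cutoff and the product of the two original cutoffs disagree, the output frequency $j$ is comparable to the symbol frequency $j-\ell$, so that any polynomial weight in $\langle j\rangle$ can be absorbed by the rapid decay of $\widehat a,\widehat b$ in their first argument (which follows from $a,b\in\Gamma^\bullet_{s_0+\rho}$ and summing the geometric-type series), yielding a $\rho$-smoothing (indeed arbitrarily smoothing) contribution that can be folded into $R^c_\rho(a,b)$. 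Everything else — the convergence of the sums, the passage between Weyl and Bony-Weyl via Remark \ref{rmk:bonynonbony}, and the final Sobolev estimate — is routine once these two points are handled.
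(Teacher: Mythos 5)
The paper does not actually prove this statement: Theorem \ref{compo} is quoted as Theorem $2.5$ of \cite{Berti-Maspero-Murgante:EK}, so there is no internal proof to compare against. Your plan follows the standard route used in that reference: exploit the spectral localization enforced by the cutoffs in \eqref{quantiWeyl}, Taylor-expand in $\xi$ to order $0$ or $1$ according to $\rho$, identify the leading terms with $\opbw(a\#_{\rho}b)$, treat the cutoff-mismatch as an arbitrarily smoothing contribution, and estimate the remainder through the action Theorem \ref{azione}, paying the fractional gain $\rho$ with the extra $x$-regularity $s_0+\rho$ of one of the two symbols. As a strategy this is the right one.

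One step of your outline is wrong as stated, and it is precisely the central algebraic identification. In the composed triple sum the symbols appear as $\widehat a\big(j-k,\frac{j+k}{2}\big)$ and $\widehat b\big(k-\ell,\frac{k+\ell}{2}\big)$; expanding only the second argument of $b$ around $\frac{j+\ell}{2}$ does not reproduce $\opbw(ab)$ at zeroth order, because $a$ is still evaluated at $\frac{j+k}{2}=\frac{j+\ell}{2}+\frac{k-\ell}{2}$, and it does not produce the full Poisson bracket at first order. You must expand \emph{both} symbols around the common midpoint $\frac{j+\ell}{2}$: the correction coming from $b$ carries the factor $-\frac{j-k}{2}\,\partial_\xi\widehat b$, i.e. a term of the form $\partial_x a\,\partial_\xi b$ after inverse Fourier transform (note that $j-k$ is the spatial frequency of $a$, not of $b$, contrary to what you wrote), while the correction coming from $a$ carries $\frac{k-\ell}{2}\,\partial_\xi\widehat a$, i.e. $\partial_\xi a\,\partial_x b$; only the antisymmetric combination of the two yields $\frac{1}{2\ii}\{a,b\}$ as required by \eqref{cancelletto}. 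Relatedly, the localization claim in your step (i) should be stated as: the two cutoffs force $|j-k|\ll\langle j+k\rangle$ and $|k-\ell|\ll\langle k+\ell\rangle$, hence $j,k,\ell$ are pairwise comparable and $|j-\ell|\ll\langle j+\ell\rangle$, which is what allows you to move $\langle\xi\rangle$-weights between the factors and to re-adapt the cutoff up to a smoothing error (cf. Remark \ref{rmk:bonynonbony}). With these repairs, the rest of your bookkeeping — in particular trading the integer Taylor gain against the fractional $x$-regularity $s_0+\rho$, distributed asymmetrically between $a$ and $b$ — is exactly the mechanism behind \eqref{resto}.
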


The next result is Lemma $2.7$ in \cite{Berti-Maspero-Murgante:EK}.
\begin{lemma}{\bf (Paraproduct).} \label{ParaMax} 
Fix $s_0>1/2$ and let $f\in H^s$ and $g\in H^r$ with $s + r \ge 0$. Then
\begin{equation}
\label{eq: paraproductMax}
fg=\opbw(f)g+\opbw({g})f+{R}^p(f,g)\,,
\end{equation}
where the bilinear operator $R: H^s \times H^r\to H^{s + r - s_0}$ satisfies the estimate
\begin{equation}
\label{eq: paraproductMax2}
\|{R}^p(f,g)\|_{H^{s+r - s_0}} \lesssim_s \|f\|_{H^s} \|g\|_{H^r}\,.
\end{equation}
\end{lemma}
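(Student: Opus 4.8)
The plan is to verify the identity \eqref{eq: paraproductMax} directly on the Fourier side and then estimate the remainder by an elementary summation that exploits the hypothesis $s_0>1/2$. Since $f$ and $g$ are $\xi$-independent symbols of order $0$, the definition \eqref{quantiWeyl} gives
\[
\widehat{\opbw(f)g}(j)=\sum_{k_1+k_2=j}\chi_\epsilon\!\Big(\tfrac{|k_1|}{\langle k_1+2k_2\rangle}\Big)\hat f(k_1)\hat g(k_2)\,,\qquad
\widehat{\opbw(g)f}(j)=\sum_{k_1+k_2=j}\chi_\epsilon\!\Big(\tfrac{|k_2|}{\langle 2k_1+k_2\rangle}\Big)\hat f(k_1)\hat g(k_2)\,.
\]
Subtracting both from $\widehat{fg}(j)=\sum_{k_1+k_2=j}\hat f(k_1)\hat g(k_2)$ gives \eqref{eq: paraproductMax} with $\widehat{R^p(f,g)}(j)=\sum_{k_1+k_2=j}m(k_1,k_2,j)\,\hat f(k_1)\hat g(k_2)$, where $m(k_1,k_2,j)$ is obtained by subtracting the two Bony cutoffs above from $1$. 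It remains to bound $R^p$.

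Next I would describe the support of $m$. Since each $\chi_\epsilon$ equals $1$ on $\{|\cdot|\le1.1\epsilon\}$ and $0$ on $\{|\cdot|\ge1.9\epsilon\}$, a short (if slightly tedious) case analysis on the values of the two cutoffs, assuming $\epsilon$ small enough, shows that $m(k_1,k_2,j)\neq0$ forces, up to $\epsilon$-dependent constants, one of two alternatives: (i) all of $|j|,|k_1|,|k_2|$ are bounded, or (ii) $\langle k_1\rangle\sim\langle k_2\rangle$ and both are $\gtrsim\langle j\rangle$ — the genuine high–high regime. In case (i) $R^p$ is a bilinear map factoring through finite-dimensional spaces, hence it maps $H^s\times H^r$ boundedly into every $H^N$, in particular into $H^{s+r-s_0}$, with the desired estimate. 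So only the high–high piece requires work.

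For the high–high piece, set $a_k:=\langle k\rangle^s|\hat f(k)|$, $b_k:=\langle k\rangle^r|\hat g(k)|$, so $\|a\|_{\ell^2}=\|f\|_{H^s}$ and $\|b\|_{\ell^2}=\|g\|_{H^r}$. On region (ii), using $\langle k_1\rangle^{-s}\langle k_2\rangle^{-r}\sim_s\langle k_1\rangle^{-(s+r)}$ (since $\langle k_1\rangle\sim\langle k_2\rangle$) and then $\langle k_1\rangle^{-(s+r)}\lesssim_s\langle j\rangle^{-(s+r)}$ (here one uses $s+r\ge0$ together with $\langle k_1\rangle\gtrsim\langle j\rangle$), one obtains
\[
\langle j\rangle^{s+r-s_0}\,|\hat f(k_1)\hat g(k_2)|
=\langle j\rangle^{s+r-s_0}\langle k_1\rangle^{-s}\langle k_2\rangle^{-r}\,a_{k_1}b_{k_2}
\;\lesssim_s\;\langle j\rangle^{-s_0}\,a_{k_1}b_{k_2}\,.
\]
Summing over $k_1$ (with $k_2=j-k_1$) and using $\|a\ast b\|_{\ell^\infty}\le\|a\|_{\ell^2}\|b\|_{\ell^2}$ gives $\langle j\rangle^{s+r-s_0}|\widehat{R^p(f,g)}(j)|\lesssim_s\langle j\rangle^{-s_0}\|f\|_{H^s}\|g\|_{H^r}$; squaring and summing over $j\in\Z$ yields \eqref{eq: paraproductMax2}, because $\sum_{j\in\Z}\langle j\rangle^{-2s_0}<\infty$ precisely when $s_0>1/2$.

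The one genuinely delicate point is extracting the full factor $\langle j\rangle^{-s_0}$ in the displayed estimate even when $s+r$ is close to $0$: this is exactly where the hypothesis $s+r\ge0$ enters, so that $\langle k_1\rangle^{-(s+r)}\lesssim\langle j\rangle^{-(s+r)}$ goes in the right direction on the high–high region, and it is this $\langle j\rangle^{-s_0}$, together with $s_0>1/2$, that closes the final $j$-summation. A minor bookkeeping issue is that $\chi_\epsilon(|j-k|/\langle j+k\rangle)$ is not a sharp Littlewood–Paley projection, but on its support $\langle j+k\rangle\sim_\epsilon\langle j\rangle$, so the low–high/high–low/high–high trichotomy, hence the whole argument, is unaffected up to $\epsilon$-dependent constants.
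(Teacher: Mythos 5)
Your proposal is correct, and it is genuinely self-contained, whereas the paper does not prove Lemma \ref{ParaMax} at all: it is quoted verbatim as Lemma 2.7 of \cite{Berti-Maspero-Murgante:EK}, so the ``paper's proof'' is a citation. Your direct Fourier-side argument is the standard one behind that reference and checks out: the identification of the two Bony cutoffs $\chi_\epsilon\bigl(|k_1|/\langle k_1+2k_2\rangle\bigr)$, $\chi_\epsilon\bigl(|k_2|/\langle 2k_1+k_2\rangle\bigr)$ is the right translation of \eqref{quantiWeyl} for $\xi$-independent symbols; the support dichotomy is accurate (if one cutoff equals $1$ and the other is nonzero, summing the two inequalities forces all frequencies $\lesssim_\epsilon 1$; in every remaining case each cutoff fails to be $1$, and $|k_1|\gtrsim\epsilon\langle k_1+2k_2\rangle$, $|k_2|\gtrsim\epsilon\langle 2k_1+k_2\rangle$ give the two-sided comparison $\langle k_1\rangle\sim_\epsilon\langle k_2\rangle\gtrsim_\epsilon\langle j\rangle$); and the weight manipulation using $s+r\ge 0$ plus Cauchy--Schwarz in $k_1$ and $\sum_j\langle j\rangle^{-2s_0}<\infty$ closes the estimate exactly as you say. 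Two small remarks, neither a gap: your caveat ``$\epsilon$ small enough'' is genuinely needed and worth keeping explicit, since for $\epsilon$ close to $1$ the second cutoff can equal $1$ even when $k_1=0$ and $k_2$ is large, so $R^p$ would contain true low--high terms like $\hat f(0)g$ and \eqref{eq: paraproductMax2} would fail for $s>s_0$ (the paper's ``$\epsilon\in(0,1)$'' is to be read with $\epsilon$ suitably small, as in \cite{Berti-Maspero-Murgante:EK}); and your implicit constants also depend on $r$, $s_0$ and $\epsilon$, which is harmless but slightly more than the ``$\lesssim_s$'' in the statement.
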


\subsection{Pseudo-differential structure of the linearizad equations}
Consider 
the operators $\mathcal{B},\mathcal{W}$ in \eqref{opastratti}.
In this subsection we shall give a more explicit expression of the 
highest order symbols of the operators $\mathcal{B},\mathcal{W}$.
More precisely we have the following.
\begin{lemma}\label{lem:formexplicBW}
Let $s\geq 0$.
There exists two operators 
$\widetilde{\mathtt{R}}_i\in \mathcal{L}(H^{s}; H^{s-\frac{2}{i}})$,
$i=1,2$, such that 
\begin{equation}\label{opside1}
\begin{aligned}
\mathcal{B}&=\opbw(-\x^4b(x)-2\ii\x^3 b_{x}(x))+\widetilde{\mathtt{R}}_1\,,
\\
\mathcal{W}&=\opbw(-\x^2c(x))+\widetilde{\mathtt{R}}_2\,.
\end{aligned}
\end{equation}
Moreover they satisfy the estimates 
\begin{equation}\label{latte2}
\|\langle D\rangle^{-\frac{1}{\ii}}\widetilde{\mathtt{R}}_i
\langle D\rangle^{-\frac{1}{i}}\|_{\mathcal{L}(H^{s};H^{s})}
\lesssim_s 1\,,\quad i=1,2\,.
\end{equation}
\end{lemma}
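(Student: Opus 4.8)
The plan is to paralinearize the variable-coefficient operators $\mathcal B = -b(x)\circ\partial_x^4 + B$ and $\mathcal W = c(x)\circ\partial_x^2 + C$ by writing the multiplication operators $h\mapsto b(x)h$ and $h\mapsto c(x)h$ via the Bony paraproduct decomposition \eqref{eq: paraproductMax}, and then folding the lower order contributions, together with $B$ and $C$, into the remainders $\widetilde{\mathtt R}_i$. Concretely, for a smooth coefficient $b(x)$ one has $b(x)h = \opbw(b(x))h + \opbw(h)b(x) + R^p(b,h)$; since $b\in C^\infty(\T)\subset H^s$ for every $s$, the term $\opbw(h)b(x)$ is (by Theorem \ref{azione} applied with the roles of symbol and function suitably read off, or directly by Lemma \ref{ParaMax}) a smoothing operator in $h$, and likewise for $R^p(b,h)$; so $b(x)\circ\partial_x^4 h = \opbw(b(x))\partial_x^4 h + (\text{smoothing})$. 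Then I would use the composition Theorem \ref{compo} with $\rho = 2$ to write $\opbw(b(x))\circ \opbw(-\xi^4) = \opbw\big(b(x)\#_2(-\xi^4)\big) + R^c_2$, and compute $b(x)\#_2(-\xi^4) = -\xi^4 b(x) + \frac{1}{2\ii}\{b(x),-\xi^4\} = -\xi^4 b(x) - \frac{1}{2\ii}(4\xi^3)(-b_x(x))\cdot(-1)$; after sorting the signs in the Poisson bracket $\{a,b\} = \partial_\xi a\,\partial_x b - \partial_x a\,\partial_\xi b$ this produces exactly $-\xi^4 b(x) - 2\ii\xi^3 b_x(x)$ as the principal-plus-subprincipal symbol, with the genuine second-order-lower error absorbed (together with $B$) into $\widetilde{\mathtt R}_1$. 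The computation for $\mathcal W$ is the analogous but simpler one: $c(x)\circ\partial_x^2$ has principal symbol $-\xi^2 c(x)$, and since we only claim the \emph{top} symbol for $\mathcal W$ (no subprincipal term), we may content ourselves with $\rho=1$ in \eqref{cancelletto}, putting the entire order-$1$ and lower part into $\widetilde{\mathtt R}_2$, which is asserted to gain two derivatives (consistent with $i=2$, i.e. a gain of $2/i = 1$ after the two conjugations, see below).

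Next I would verify the mapping properties \eqref{latte2}. For $i=1$: each smoothing piece produced above (the paraproduct leftovers $\opbw(h)b$, $R^p(b,h)$, and the composition remainder $R^c_2(b,-\xi^4)$, as well as whatever regularity $B$ already has by hypothesis — it maps $H^{s+2}\to H^s$, i.e. is order $2$, \emph{not} order $3$) is a priori only order $\le 3$, not order $2$. This is precisely why the statement conjugates with $\langle D\rangle^{-1/\ii}=\langle D\rangle^{-1}$ on the left (note $1/\ii$ must be read as the exponent $1$ here) and $\langle D\rangle^{-1}$ on the right: the claim $\|\langle D\rangle^{-1}\widetilde{\mathtt R}_1\langle D\rangle^{-1}\|_{\mathcal L(H^s;H^s)}\lesssim_s 1$ says that $\widetilde{\mathtt R}_1$ has order at most $2$ after this symmetric splitting — one derivative absorbed on each side — which is exactly what paradifferential calculus delivers: $\opbw$ of an order-$4$ symbol composed against $\partial_x^4$ and then corrected to order $-\xi^4 b - 2\ii\xi^3 b_x$ leaves a remainder of order $\le 2$. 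For $i=2$ the conjugation is $\langle D\rangle^{-1/\ii}=\langle D\rangle^{-1}$ on the left and $\langle D\rangle^{-1/2}$ on the right, so the claim is that $\widetilde{\mathtt R}_2$ has order $\le 3/2$; since the true remainder in paralinearizing $c(x)\partial_x^2$ at order $\rho=1$ is order $1\le 3/2$, and $C$ is order $2$ — wait, $C$ is order $2$, which would need checking against $3/2$. I would handle this by taking $\rho=2$ (not $\rho=1$) in the $\mathcal W$ paralinearization so that the computed symbol is $-\xi^2 c(x) + \frac1{2\ii}\{c,-\xi^2\}$, absorb the first-order piece $\frac1{2\ii}\{c,-\xi^2\} = \ii \xi c_x$ itself \emph{back} into the remainder (it is order $1$, hence order $\le 3/2$), and note that $B$, $C$ being order $2$ is fine for $i=1$ ($2\le 2$) but for $i=2$ we instead only need that part of $\mathcal W$ that is not its top symbol to be order $\le 3/2$ — so the hypothesis ``$C\colon H^{s+2}\to H^s$'' must be read as the sharp statement that $\widetilde{\mathtt R}_2$ from $C$ alone is order $2$, and then the estimate \eqref{latte2} for $i=2$ with the exponents $(1,1/2)$ summing to $3/2<2$ is \emph{weaker} than what we can prove, hence certainly true. (In fact I suspect the cleanest reading is that for $i=2$ the conjugation exponents are $(-1,-1)$ as well by symmetry of notation $\langle D\rangle^{-1/\ii}\widetilde{\mathtt R}_2\langle D\rangle^{-1/i}$ with $i=2$ giving $-1/2$; either way the arithmetic $1+1/2\ge 2-$something works since all error operators are genuinely of order $\le 2$.)

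The routine ingredients — that $\opbw(a)$ for $a$ of order $m$ is $\mathcal L(H^s;H^{s-m})$ (Theorem \ref{azione}), that the Bony-vs-Weyl discrepancy is smoothing (Remark \ref{rmk:bonynonbony}), that composition remainders $R^c_\rho$ gain $\rho$ derivatives (Theorem \ref{compo}), and that multiplication by a $C^\infty$ function minus its paraproduct is smoothing (Lemma \ref{ParaMax}) — are all quoted from the excerpt, so the proof is essentially an exercise in (i) correctly computing the two Poisson brackets with the right signs and (ii) bookkeeping the orders of each leftover term so that the symmetric $\langle D\rangle$-conjugations in \eqref{latte2} exactly capture them. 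The only genuine subtlety — and the step I expect to demand the most care — is matching the asymmetric conjugation $\langle D\rangle^{-1/\ii}\,\cdot\,\langle D\rangle^{-1/i}$ to the actual order structure: the left exponent is always $1$ (the notation $1/\ii$ is a typographical device, not the imaginary unit in a denominator), while the right exponent $1/i$ differentiates the beam case ($i=1$, two full derivatives to absorb because the top symbol carries $\xi^4$ but we only extract down to $\xi^3$) from the wave case ($i=2$, where the top symbol is $\xi^2$ and we extract the whole principal part, leaving an order-$1$ tail that the split $1+1/2$ comfortably dominates). Once that bookkeeping is pinned down, \eqref{opside1} and \eqref{latte2} follow by assembling the pieces.
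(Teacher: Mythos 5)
Your symbolic core is the same as the paper's: compose with Theorem \ref{compo} (with $\rho=2$ for the beam) and read off $-\x^4b(x)-2\ii\x^3b_x(x)$, resp. $-\x^2c(x)$, putting the composition remainders and $B$, $C$ into $\widetilde{\mathtt{R}}_i$. Your entry point, however, differs from the paper's and has a hole: you paralinearize the product via Lemma \ref{ParaMax}, writing $b\,h=\opbw(b)h+\opbw(h)b+R^p(b,h)$ with $h=\partial_x^4(\cdot)$, and claim the middle term is smoothing "by Theorem \ref{azione} with the roles of symbol and function read off". For the regularities allowed here ($s\ge 0$, with up to four derivatives landing on the argument) the "symbol" $h$ need not belong to $H^{s_0}$, $s_0>1/2$, so neither Theorem \ref{azione} nor Lemma \ref{ParaMax} as quoted covers $\opbw(h)b$; you would need a paraproduct bound with low/negative-regularity symbol, or simply the paper's route, which notes that multiplication by $b$ \emph{is} $\opw(b)$ and invokes Remark \ref{rmk:bonynonbony} to replace $\opw(b)$ by $\opbw(b)$ up to an arbitrarily smoothing operator whose norm involves only Sobolev norms of $b$. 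This is fixable, but it is not the argument you gave.

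The genuine error is in your bookkeeping for \eqref{latte2}. The stated mapping property $\widetilde{\mathtt{R}}_i\in\mathcal{L}(H^s;H^{s-2/i})$, and the way the lemma is used in Proposition \ref{paraparaProp} (where $\mathcal{W}$ is conjugated by $\langle D\rangle^{-1/2}$ on \emph{both} sides), force the reading that both exponents in \eqref{latte2} equal $1/i$; hence the wave remainder must have order at most $1$, not $3/2$. Your resolution --- that since every error term has order $\le 2$, the estimate for $i=2$ is "weaker than what we can prove, hence certainly true" --- runs the inequality in the wrong direction: conjugating an order-$2$ operator by a total smoothing of $3/2$ (or $1$) leaves an operator of strictly positive order, not a bounded one. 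The $c(x)\partial_x^2$ part is genuinely of order $1$ after extracting $-\x^2c(x)$ (composition remainder with $\rho=1$, or the order-one Poisson-bracket term if you take $\rho=2$), so that piece is fine; but the absorption of $C$ cannot be deduced from the hypothesis that $C$ maps $H^{s+2}$ to $H^s$ by your argument --- it needs $C$ of order at most $1$ (as it is in the concrete bridge model, where only the first-order leftover of $\partial_x(2H_0\xi^{-2}\partial_x\cdot)$ remains after extracting $c(x)\partial_x^2$), a point the paper itself glosses over but which your "weaker estimate" reasoning does not close.
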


\begin{proof}
First of all, in view of Remark
\ref{rmk:bonynonbony} and recalling \eqref{quantiWeylcl}-\eqref{quantiWeyl},
we rewrite the multiplication operators by $b(x)=\opw(b(x))$ and $c(x)=\opw(c(x))$ as
\begin{equation}\label{latte}
\begin{aligned}
\opw(b(x))&=\opbw(b(x))+\mathtt{Q}_1\,,
\\
\opw(c(x))&=\opbw(c(x))+\mathtt{Q}_2\,, 
\end{aligned}
\end{equation}
for some remainders
satisfying
\[
\begin{aligned}
\|{\mathtt{Q}_{1}}\|_{\mathcal{L}(H^{s};H^{s+\rho})}&\lesssim_{s,\rho}
\|b\|_{H^{s+\rho}}\,,
\qquad
\|\mathtt{Q}_{2}\|_{\mathcal{L}(H^{s};H^{s+\rho})}\lesssim_{s,\rho}
\|c\|_{s+\rho}, \qquad \forall \rho\geq 0,\,\,\forall s\in \R\,.
\end{aligned}
\]
Recalling that $\pa_{x}=\opbw(\ii\x)$ and using the composition Theorem \ref{compo} 
we deduce
that
\[
\begin{aligned}
- \opbw(b(x))  \pa_{x}^4 &=
- \opbw(b(x))  \circ\opbw(\x^4)
\\&
-\opbw\Big(
\x^{4}b(x)+\frac{1}{2\ii}\{b(x),\x^4\}\Big)+Q_1
\\&
=-\opbw(\x^4b(x)+2\ii b_x(x)\x^3)+Q_1
\\
\opbw(c(x))\pa_{x}^2&=\opbw(-c(x)|\x|^2)+Q_2
\end{aligned}
\]
for some remainders $Q_{1},Q_2$ satisfying \eqref{latte2}.
Recalling \eqref{latte} we obtained  \eqref{opside1}.
\end{proof}

\section{Paralinearization of the system}\label{sec:parasez}

In this section we transform the system \eqref{sistemaponte} 
using the paraproduct formula. 
Recalling the Fourier multiplier in \eqref{Foulangle} and
according to \eqref{quantiWeyl}, we shall write
$\langle D\rangle:=\opbw(\langle\x\rangle)$.
We set
\begin{equation}\label{inverse}
\begin{aligned}
y={\sqrt{2}}\langle D\rangle^{-1}\Re(z)\,,
\quad 
\theta={\sqrt{2}}\langle D\rangle^{-1/2}\Re(w)\,,\quad
V:=\left(\begin{matrix}Z \\ W\end{matrix}\right)\,,\quad Z=\vect{z}{\bar{z}}\,,\;\;
W=\vect{w}{\bar{w}}
\end{aligned}
\end{equation}
and we  define the functions
\begin{equation}\label{simbog}
\begin{aligned}
a(x)&:=\tfrac{1}{2}(b(x)-1)\,,
\qquad 
d(x):=\tfrac{1}{2}(c(x)-1)\,,
\\
\mathtt{g}_{1,w}(V;x)
&:=\tfrac{1}{2}(\pa_{\theta_{xx}}F_2)(x,y,y_x,y_{xx},\theta,\theta_x,\theta_{xx})\,,
\\
\mathtt{g}_{\frac{1}{2},b}(V;x)
&:=\tfrac{1}{2}(\pa_{\theta_{xx}}F_1)(x,y,y_x,y_{xx},\theta,\theta_x,\theta_{xx})\,,
\\
\mathtt{g}_{\frac{1}{2},w}(V;x)
&:=\tfrac{1}{2}(\pa_{y_{xx}}F_2)(x,y,y_x,y_{xx},\theta,\theta_x,\theta_{xx})\,,
\end{aligned}
\end{equation}
and  the matrices
\[
E:=\sm{1}{0}{0}{-1}\,,\qquad {\bf U} :=\sm{1}{1}{1}{1}\,,
\qquad \Id_{\C^2}:=\sm{1}{0}{0}{1}\,.
\]
It is also convenient to set the following notations for 
symbols:
 \begin{align}
 A_b(x,\xi)&:=\Big((\Id_{\C^2}+{\bf U} a(x))\,\xi^2+2\ii {\bf U} a_x(x)\xi\Big)\label{matriceregobeam}
 \\
 A_w(x,\xi)&:= A_w(V;x,\xi)
:=\Big(\Id_{\C^2}+{\bf U} a_{w}(V;x)\Big)|\x|\label{matriceregowave}\,,
 \\
 a_w(x)&:=a_w(V;x):=d(x) +\mathtt{g}_{1,w}(V;x)\,,
 \label{simbolowave}
 \\
  B_b(x,\xi)&:=B_b(V;x,\xi):={\bf U} \mathtt{g}_{\frac12,b}(V;x)\langle\xi\rangle^{-\frac{3}{2}}\xi^2
  \label{simbolobeamoff}
  \\
    B_w(x,\xi)&:=B_w(V;x,\xi):={\bf U} \mathtt{g}_{\frac12,w}(V;x)\langle\xi\rangle^{-\frac32}\xi^2\,.
  \label{simbolowaveoff}
 \end{align}

 \textbf{Notation.} From now on when we write $\lesssim_s$ we mean that there are constants that could depend on the $H^s$ Sobolev norm of the coefficients $b(x)$ and $c(x)$, and so also of the functions $a(x)$ and $d(x)$.
 
 \medskip

We fix the parameters
 \begin{equation}\label{parametrifissati}
 s_0>\frac{1}{2}\,,\quad s_1:= s_0+\frac{3}{2}\,,\qquad s_2:= s_1+2\,,\quad \mathfrak{s}:= s_2+1\,.
 \end{equation}
 \begin{itemize}
 \item[$\bullet$] The parameter $s_0$ denotes the regularity threshold for which $H^{s_0}$
 is an algebra.
 
 \item[$\bullet$] The parameter $s_1$ denotes the regularity of the coefficients of the paralinearized system and the remainder of the paralinearization procedure will be bounded on $H^s$ spaces with $s\geq s_1+1$. The index $s_1$ depends on the order of derivatives appearing in the nonlinear terms $F_1, F_2$ in \eqref{sistemaponte}. 

 \item[$\bullet$] The parameter $s_2$ denotes the minimal regularity
 in which we are able to provide energy estimates for the system.
 \item[$\bullet$] The parameter $\mathfrak{s}$ denotes the minimal regularity in which we 
prove the existence results in the \emph{real} coordinates $(y, \theta)$.

 \end{itemize}

We prove the following.
\begin{proposition}\label{paraparaProp}
Let $s\geq s_1+1$.
Under the Hypothesis \ref{hyp1} one has that 
system \eqref{sistemaponte}
is equivalent to
\begin{equation}\label{ponteparato}
\pa_{t}V=\mathfrak{A}(V)V+\mathfrak{B}(V)V+\mathtt{R}V+\mathcal{R}(V)+G(t)\,,
\end{equation}
where $V$ is as in \eqref{inverse} and

\noindent
$(i)$ 
$\mathcal{R} :\mathcal{H}^{s}\times \mathcal{H}^{s}\to \mathcal{H}^s\times\mathcal{H}^s$
is bilinear and
\begin{equation}\label{stimaRRR}
\|\mathcal{R}(V)\|_{H^{s}}\lesssim_{s}\|V\|_{H^s}\|V\|_{H^{{s}_1+1}}\,,
\quad \forall\,V\in \mathcal{H}^{s}\times\mathcal{H}^{s}\,.
\end{equation}

\noindent
$(ii)$ The linear operator 
$\mathtt{R}\in \mathcal{L}(\mathcal{H}^{s}\times\mathcal{H}^{s};
\mathcal{H}^{s}\times\mathcal{H}^{s})$
satisfies the estimate 
\begin{align}
\|\mathtt{R} V\|_{H^s}\lesssim_s& \|V\|_{H^s}\,,\quad \forall\, V\in \mathcal{H}^{s}\times\mathcal{H}^{s}\,.
\label{restilineari}
\end{align}
Moreover it has the form
\[
\mathtt{R}=\left(\begin{matrix}\mathtt{R}_1 & 0 \\0& \mathtt{R_2}\end{matrix}\right)\,,
\qquad \mathtt{R}_i\in \mathcal{L}(\mathcal{H}^{s};\mathcal{H}^s)\,,\; i=1,2\,.
\]

\noindent
$(iii)$ We have that
\[
\begin{aligned} 
&\mathcal{H}^{{s}_1}\times\mathcal{H}^{{s}_1}\ni V
\mapsto
\mathfrak{A}(V)[\cdot]\in 
\mathcal{L}(\mathcal{H}^{s}\times\mathcal{H}^{s};\mathcal{H}^{s-2}\times\mathcal{H}^{s-1})
\\
&\mathcal{H}^{{s}_1}\times\mathcal{H}^{{s}_1}\ni V
\mapsto
\mathfrak{B}(V)[\cdot]\in 
\mathcal{L}(\mathcal{H}^{s}\times\mathcal{H}^{s};\mathcal{H}^{s-\frac{1}{2}}\times\mathcal{H}^{s-\frac{1}{2}})
\end{aligned}
\]
and the operators $\mathfrak{A}(V)[\cdot], \mathfrak{B}(V)[\cdot ]$ have the form 
\begin{equation}\label{matricitotali}
\begin{aligned}
\mathfrak{A}(V)[\cdot]&=\left(
\begin{matrix}
-\ii E\opbw(A_b(x,\x)) & 0 \\ 0 &-\ii E\opbw(A_{w}(V;x,\x))
\end{matrix}
\right)\,,
\\
\mathfrak{B}(V)[\cdot]&=\left(
\begin{matrix}
0  & -\ii E\opbw(B_b(V;x,\x))  \\ -\ii E\opbw(B_{w}(V;x,\x))& 0
\end{matrix}
\right)\,.
\end{aligned}
\end{equation}

\noindent
$(iv)$ The symbol in \eqref{matriceregobeam} satisfies
(recall \eqref{seminorme})
\begin{equation}\label{opside2}
|A_{b}|_{2,s,\alpha}\lesssim_{s} 1\,,
\quad 
\forall\,s\in\R\,,\alpha\in\N\,,
\end{equation}
while the symbols in \eqref{matriceregowave}-\eqref{simbolobeamoff}-\eqref{simbolowaveoff} satisfy 
{for all $\alpha\in\mathbb{N}$, for all $V, V_1, V_2\in \mathcal{H}^s\times \mathcal{H}^s$,}
\begin{align}
&|B_{b}(V;x,\xi)|_{\frac12,p,\alpha}+|B_{w}(V;x,\xi)|_{\frac12,p,\alpha}\lesssim_s 
\|V\|_{p+\frac32}\,, \qquad  s_0\leq p\leq s-3/2\,,
\label{figaro3}
\\
&|A_{w}(V;x,\xi)|_{1,p,\alpha}\lesssim \|V\|_{p+\frac32}\,,
\qquad  \qquad \qquad \qquad \qquad s_0\leq p\leq s-3/2\,,\label{figaro4}\\
&|A_{w}(V_1;x,\xi)-A_{w}(V_2;x,\xi)|_{1,p,\alpha}\lesssim \|V_1-V_2\|_{p+\frac32}\,,\quad s_0\leq p\leq s-3/2,\label{figaro55}\\
&|B_{*}(V_1;x,\xi)-B_{*}(V_2;x,\xi)|_{\frac12,p,\alpha}\lesssim \|V_1-V_2\|_{p+\frac32},\quad  *\in\{b,w\},\,s_0\leq p\leq s-3/2\label{figaro66}\,.
\end{align}

\noindent
$(v)$ The forcing term $G(t)$ has the form
\[
G(t):=\vect{G_{b}(t)}{G_{w}(t)}\,,
\qquad G_{b}(t):=\frac{\ii\gamma}{\sqrt{2}}\vect{\langle D\rangle^{-1}\mathtt{f}_{b}(t)}{\langle D\rangle^{-1}\mathtt{f}_{b}(t)}\,,
\quad
G_{w}(t):=\frac{\ii\delta}{\sqrt{2}}\vect{\langle D\rangle^{-1/2}\mathtt{f}_{w}(t)}{\langle D\rangle^{-1/2}\mathtt{f}_{w}(t)}\,.
\]
\end{proposition}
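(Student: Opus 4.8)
The plan is to transform system \eqref{sistemaponte} step by step, tracking the order of each operator in the $(z,w)$ variables. First I would substitute the definitions \eqref{inverse}, that is $y=\sqrt 2\langle D\rangle^{-1}\Re(z)$ and $\theta=\sqrt 2\langle D\rangle^{-1/2}\Re(w)$, into the two second-order-in-time equations and reduce them to a first-order system in time by introducing $z$ so that $\partial_t y$ is essentially $\langle D\rangle^{-1}$ times (the real part of) something of order $1$ applied to $z$, and similarly for $w$ with the half-order weight. The natural choice is to set $\partial_t(\langle D\rangle^{-1}\Re z)\sim \langle D\rangle^{-1}\Re(\ii\langle D\rangle z)$ and $\partial_t(\langle D\rangle^{-1/2}\Re w)\sim\langle D\rangle^{-1/2}\Re(\ii\langle D\rangle^{1/2} w)$, which exactly produces the diagonal transport pieces $-\ii E\opbw(\langle\xi\rangle^2)$ and $-\ii E\opbw(\langle\xi\rangle)$ that get absorbed into $\mathfrak A(V)$ together with the identity parts of $A_b,A_w$; the precise bookkeeping is a linear algebra computation that I would not write out in full.

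Next I would paralinearize the spatial linear operators and the nonlinearities. For the linear part, Lemma \ref{lem:formexplicBW} already gives $\mathcal B=\opbw(-\xi^4 b(x)-2\ii\xi^3 b_x(x))+\widetilde{\mathtt R}_1$ and $\mathcal W=\opbw(-\xi^2 c(x))+\widetilde{\mathtt R}_2$; conjugating by the Fourier weights $\langle D\rangle^{-1},\langle D\rangle^{-1/2}$ via the composition Theorem \ref{compo} turns $-\xi^4 b(x)$ into (the diagonal entry of) $A_b$ at order $2$ and $-\xi^2 c(x)$ into $A_w$ at order $1$, with the commutator corrections and the $\widetilde{\mathtt R}_i$ landing in $\mathtt R$ after using \eqref{latte2} and the conjugation bounds in Theorem \ref{azione}. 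For the quadratic nonlinearities $F_i$, I would apply the Bony paraproduct Lemma \ref{ParaMax} repeatedly: each monomial $h_j h_k$ with $h_j,h_k\in\{y,y_x,y_{xx},\theta,\theta_x,\theta_{xx}\}$ is written as $\opbw(h_j)h_k+\opbw(h_k)h_j+R^p(h_j,h_k)$; the genuinely bilinear remainders $R^p$ are collected into $\mathcal R(V)$ and estimated by \eqref{eq: paraproductMax2} together with \eqref{intorpolostima}/\eqref{tame} to give \eqref{stimaRRR}, while the paraproduct pieces produce symbols. The dangerous terms are those where the function sitting \emph{outside} the paraproduct carries two derivatives, i.e.\ $\opbw(\cdot)\,\partial_{xx}(\text{something})$: these are exactly $\partial_{\theta_{xx}}F_2\cdot\theta_{xx}$ (which after the $\langle D\rangle^{-1/2}$-conjugation contributes to the diagonal $A_w$ through $a_w=d+\mathtt g_{1,w}$, order $1$), $\partial_{\theta_{xx}}F_1\cdot\theta_{xx}$ and $\partial_{y_{xx}}F_2\cdot y_{xx}$, which couple the two components; after pulling $\theta=\sqrt2\langle D\rangle^{-1/2}\Re w$ (resp.\ $y=\sqrt2\langle D\rangle^{-1}\Re z$) out of the second derivative one gets symbols $\langle\xi\rangle^{-3/2}\xi^2$ of order $1/2$, which is the origin of $B_b,B_w$ in \eqref{simbolobeamoff}-\eqref{simbolowaveoff} and of the claim in $(iii)$ that the coupling $\mathfrak B$ has order $1/2$. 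The real-structure bookkeeping (the matrices $E,{\bf U},\Id_{\C^2}$) comes from writing $\Re z=\tfrac12(z+\bar z)$, so that multiplication by a real function acts as ${\bf U}$ times that function on the vector $\vect{z}{\bar z}$, and the transport term keeps its sign through $E$.

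Then I would prove the symbol estimates $(iv)$. Estimate \eqref{opside2} for $A_b$ is immediate since its coefficients are built from $b,b_x$ which are fixed smooth functions, so $|A_b|_{2,s,\alpha}\lesssim_s 1$ with constants depending on $\|b\|_{H^{s+1}}$ as announced in the Notation. For \eqref{figaro3}-\eqref{figaro4} the symbols $B_b,B_w,A_w$ (minus its constant part) are of the form $\langle\xi\rangle^{-3/2}\xi^2$ or $|\xi|$ times $\mathtt g_{\ast}(V;x)=\tfrac12\partial_{h_k}F_i(x,y,y_x,y_{xx},\theta,\theta_x,\theta_{xx})$; since $F_i$ is a quadratic polynomial, $\partial_{h_k}F_i$ is affine in its arguments, so $\|\mathtt g_\ast(V;x)\|_{H^p}\lesssim_s \|(y,y_x,y_{xx},\theta,\theta_x,\theta_{xx})\|_{H^p}\lesssim \|V\|_{H^{p+3/2}}$ — here the $3/2$ shift is exactly the loss in passing from $V=(z,w)$ back to $(y,y_x,y_{xx})$ and $(\theta,\theta_x,\theta_{xx})$ through the weights $\langle D\rangle^{-1},\langle D\rangle^{-1/2}$, using $\|y_{xx}\|_{H^p}\sim\|z\|_{H^{p+1}}$ and $\|\theta_{xx}\|_{H^p}\sim\|w\|_{H^{p+3/2}}$. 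The Lipschitz estimates \eqref{figaro55}-\eqref{figaro66} follow in the same way because $V\mapsto\mathtt g_\ast(V;x)$ is affine plus a multilinear remainder, hence Lipschitz on bounded sets with the stated gain; the restriction $p\le s-3/2$ is just so that $V\in\mathcal H^s\times\mathcal H^s$ guarantees the relevant norms are finite. Finally the forcing term $(v)$ is a direct consequence of substituting \eqref{inverse} into $\gamma\mathtt f_b(t)$ and $\delta\mathtt f_w(t)$ and splitting the real parts, giving the displayed $G_b,G_w$ with the $\langle D\rangle^{-1},\langle D\rangle^{-1/2}$ weights.

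The main obstacle, and the step deserving the most care, is the combinatorial bookkeeping in the paralinearization of the $F_i$: making sure that (a) every paraproduct remainder that ends up in $\mathcal R(V)$ really maps $\mathcal H^s\times\mathcal H^s$ into itself with only one low-norm factor (this forces the threshold $s_1+1=s_0+5/2$, since $\partial_{xx}$ of a factor costs two derivatives and the paraproduct remainder in Lemma \ref{ParaMax} gives back $s_0$ derivatives), (b) the symbols that survive are \emph{exactly} $A_b,A_w,B_b,B_w$ and everything else of strictly lower order lands in $\mathtt R$ with bound \eqref{restilineari}, and (c) the block structure of $\mathfrak A$ (diagonal) and $\mathfrak B$ (off-diagonal) is respected — in particular that the only contributions to $A_w$ are $d(x)$ and $\mathtt g_{1,w}$ and not, say, a $y_{xx}$-term, because $\partial_{y_{xx}}F_2$ multiplies $y_{xx}$ which after conjugation has the \emph{wrong} weight and therefore produces an off-diagonal order-$1/2$ symbol $B_w$ rather than a diagonal order-$1$ one. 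Once this table is set up carefully the estimates are routine applications of Theorems \ref{azione}, \ref{compo} and Lemmas \ref{ParaMax}, \ref{lem:formexplicBW} together with \eqref{tame}-\eqref{intorpolostima}.
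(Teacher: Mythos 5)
Your proposal follows essentially the same route as the paper: pass to the complex coordinates \eqref{inverse}, paralinearize the linear part via Lemma \ref{lem:formexplicBW} and Theorem \ref{compo}, paralinearize the quadratic nonlinearities via Lemma \ref{ParaMax}, and recognize that only the terms $\partial_{\theta_{xx}}F_2\,\theta_{xx}$, $\partial_{\theta_{xx}}F_1\,\theta_{xx}$, $\partial_{y_{xx}}F_2\,y_{xx}$ survive as the symbols $A_w,B_b,B_w$ because of the $\langle D\rangle^{-1}$, $\langle D\rangle^{-1/2}$ weight bookkeeping, with all remaining paraproduct and composition remainders absorbed into $\mathcal{R}$ and $\mathtt{R}$, exactly as in the paper's proof. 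You gloss a few routine points the paper spells out (the replacement of $\xi^2\langle\xi\rangle^{-1}$ by $|\xi|$ via \eqref{diff-simbo}, the damping terms $\ii\alpha\Im(z)$, $\ii\beta\Im(w)$ entering $\mathtt{R}_1,\mathtt{R}_2$, and why $\partial_{y_{xx}}F_1\,y_{xx}$ is harmless), but these are lower-order checks consistent with your scheme.
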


\begin{proof}
First of all, in the complex coordinates \eqref{inverse} 
the system \eqref{sistemaponte} reads as
\begin{align}
\pa_{t}z&= \langle D\rangle^2\Im(z)
+\ii\langle D\rangle^{-1}\mathcal{B}\langle D\rangle^{-1}\Re(z)
+\ii\alpha{\rm Im}(z)
\label{compbeamequ}
\\&
\qquad\qquad
+\tfrac{\ii }{\sqrt{2}}\langle D\rangle^{-1}
f_{1}\Big( {\langle D\rangle^{-1}}{\sqrt{2}}\Re(z),
{\langle D\rangle^{-1/2}}{\sqrt{2}}\Re(w)
\Big)
+\frac{\ii}{\sqrt{2}}\gamma\langle D\rangle^{-1} \mathtt{f}_{b}(t)
\,,\label{compbeamequ2}
\\
\pa_{t}w&= \langle D\rangle\Im(w)
+{\ii}\langle D\rangle^{-1/2}\mathcal{W}\langle D\rangle^{-1/2}\Re(w)
+\ii \beta{\rm Im}(w)
\label{compwaveequ}
\\&
\qquad
+\tfrac{\ii }{\sqrt{2}}\langle D\rangle^{-1/2}f_{2}\Big( {\langle D\rangle^{-1}}{\sqrt{2}}\Re(z),
{\langle D\rangle^{-1/2}}{\sqrt{2}}\Re(w)
\Big)+\frac{\ii}{\sqrt{2}}\delta\langle D\rangle^{-\frac{1}{2}}\mathtt{f}_{w}(t)
\,,\label{compwaveequ2}
\end{align}
where, to shorten the notation, we denoted the nonlinear terms by
\begin{equation*}
f_i(y, \theta)=F_i(y, y_x, y_{xx}, \theta, \theta_x, \theta_{xx}) \qquad i=1, 2
\end{equation*}
and we omitted the equations for the complex conjugates.

\noindent
Consider first the nonlinear term $f_1$ and recall that, by assumption,  $s-2\geq {s}_0>1/2$.
Since $f_1$ is a polynomial of degree two in the unknown 
of the equation we write 
$f_1(y,\theta)=a_1(x)\theta_{xx}^2+\theta_{xx}\check{f}_1^1+\check{f}_1^2$, 
where $\check{f}_1^i$, $i=1,2$, are polynomial independent of $\theta_{xx}$.
By using \eqref{tame}  we have
\begin{equation}\label{restaccio}
\begin{aligned}
\| \langle D\rangle^{-1}\check{f}^2_1\big(\langle D\rangle^{-1}\tfrac{z+\bar{z}}{\sqrt{2}}, 
\langle& D\rangle^{-\frac12}\tfrac{w+\bar{w}}{\sqrt{2}}\big)\|_{H^s}
\lesssim_s 
\|\check{f}^2_1\big(\langle D\rangle^{-1}\tfrac{z+\bar{z}}{\sqrt{2}}, 
\langle D\rangle^{-\frac12}\tfrac{w+\bar{w}}{\sqrt{2}}\big)\|_{H^{s-1}}\,,
\\
&\lesssim_s (\|w\|_{H^{\mathfrak{s}_0+1/2}}+\|z\|_{H^{\mathfrak{s}_0+1}})(\|z\|_{H^s}+\|w\|_{H^{s-1/2}})
\end{aligned}
\end{equation}
which shows that  it
is a contribution to $\mathcal{R}$ as in \eqref{stimaRRR}.
Consider now $a_1(x)\theta_{xx}^2$. By applying first Lemma \ref{ParaMax} 
and the composition Theorem \ref{compo} we obtain
\begin{equation*}
\begin{aligned}
a_1(x)\theta_{xx}^2&=\opbw(a_1(x))\theta_{xx}^2+\opbw(\theta_{xx}^2)\,{a_1}(x)+R^p(\theta_{xx}^2, \,{a_1}(x))
\\&
=\opbw(2 a_1(x)\theta_{xx})\theta_{xx}+\underbrace{R^c_{\frac12}(a_1(x), {2} \theta_{xx})\theta_{xx}}_{(I)}
\\&
+\underbrace{\opbw(a_1(x))R^p(\theta_{xx},\theta_{xx})}_{(II)}
+\underbrace{\opbw(\theta_{xx}^2)\,{a_1}(x)}_{(III)}
+\underbrace{R^p(\theta_{xx}^2,a_1(x))}_{(IV)}\,.
\end{aligned}
\end{equation*}
We now show that the lasts  four summands in the r.h.s of the above equation may be absorbed in the remainder $\mathcal{R}$ after passing in complex coordinates. Consider $(I)$, 
by means of \eqref{resto} we have 
\begin{align*}
\|\langle D\rangle^{-1}R^c_{\frac12}(a_1(x),&{\sqrt{2}}\langle D\rangle^{-\frac12}\Re{w_{xx}})\langle D\rangle^{-\frac12}{\sqrt{2}}\Re{w_{xx}})\|_{H^s}
\\&
\lesssim_s\|\langle D\rangle^{-\frac12}{\sqrt{2}}\Re{w_{xx}}\|_{H^{s-1-1/2}}\|a_1\|_{H^{\mathfrak{s}_0+1/2}}\|\langle D\rangle^{-1/2}{\sqrt{2}}\Re{w_{xx}}\|_{H^{{s}_0+1/2}}
\\
&\lesssim_s \|w\|_{H^s}\|w\|_{H^{{s}_0+2}}\,.
\end{align*}
Hence $(I)$ is a term that can be absorbed in $\mathcal{R}$. 

\noindent
 We study $(II)$, by using \eqref{ac1} and the fact that $a$ is a regular function,  the estimate
 \eqref{eq: paraproductMax2} with $r={s}_0+1/2$ we get
\begin{equation*}
\|\langle D\rangle^{-1}\opbw({a_1}(x))R^p
(\langle D\rangle^{-\frac12} \,\sqrt{2}\Re(w_{xx}),\langle D\rangle^{-\frac12}\,\sqrt{2}\Re(w_{xx}))\|_{H^s}
\lesssim_s \|w\|_{H^s}\|w\|_{H^{{s}_0+2}}\,,
\end{equation*}
which again may be absorbed in $\mathcal{R}$. \\The term $(IV)$ gives the contribution $\|\langle D\rangle^{-1}R^p\big(\langle D\rangle^{-1/2}\Re(w_{xx}/\sqrt{2})^2, a_1(x)\big)\|_{H^s}$. This may be estimated as done for $(II)$, by using also \eqref{tame}. Concerning $(III)$, we have 
\begin{equation*}
\|\opbw\Big(\langle D\rangle^{-\frac12}2\Re(w_{xx})^2\Big)a_1(x)\|_{H^s}\lesssim_s \|w\|_{H^{{s}_0+3/2}}\,.
\end{equation*}
We eventually obtained that the only term which is not a remainder 
coming from the paralinearization of $a_1(x)\theta_{xx}^{2}$ is
\begin{align}
\langle D\rangle^{-1}\opbw\Big(2 a_1(x)&\langle D\rangle^{-\frac12}\,\sqrt{2}\Re{(w_{xx})}\Big)\langle D\rangle^{-\frac12}\, \sqrt{2}\Re(w_{xx})
\nonumber
\\&
=-\opbw\Big(2 a_1(x)\Big(\langle D\rangle^{-\frac12} \,\sqrt{2}\Re(w_{xx})\Big)
\langle \xi\rangle^{-\frac32}\xi^2\Big)\,\sqrt{2}\Re(w)\label{figaro1}
\\&
+R^c_{\frac12}\Big(2\langle\xi\rangle^{-1}a_1(x)
\langle D\rangle^{-\frac12}\,\sqrt{2}\Re(w_{xx}),\xi^2\langle\xi\rangle^{-\frac12}\Big)\,\sqrt{2}\Re(w)\,,
\label{figaro2}
\end{align}
where we have used Theorem \ref{compo} with $\rho=1/2$. 
By \eqref{resto} 
one can show that the term in \eqref{figaro2} contributes to $\mathcal{R}$.
The summand in \eqref{figaro1} is a contribution to the paradifferential operator
with symbol $B_{b}(x,\x)$ (see \eqref{matricitotali}, \eqref{simbolobeamoff}).
Indeed one has (recall \eqref{seminorme})
\begin{equation*}
|a_1(x)\Big(\langle D\rangle^{-\frac12}\,\sqrt{2}\Re(w_{xx})\Big)
\langle \xi\rangle^{-\frac32}\xi^2|_{\frac12,p,\alpha}\lesssim_{p, \alpha} \|w\|_{p+\frac32}\,,
\quad \forall s_0\leq p\leq s-3/2\,, \quad \,\forall \alpha\in \mathbb{N},
\end{equation*}
which is the bound \eqref{figaro3}.
Reasoning analogously (applying Theorem \ref{compo} and  Lemma \ref{ParaMax}) one proves that the term 
$\theta_{xx}\check{f}_1^1$, passing to complex coordinates reads
\begin{equation*}
\opbw(\check{f}_1^1(\langle D\rangle^{-1}\sqrt{2}\Re(z),\langle D\rangle^{-\frac12}\sqrt{2}\Re(w))\langle\xi\rangle^{\frac12})\sqrt{2}\Re(w)+\widetilde{\mathcal{R}}(z,w)\,,
\end{equation*}
for some $\widetilde{\mathcal{R}}$ satisfying \eqref{stimaRRR}.
Hence we proved that 
\begin{equation*}
\begin{aligned}
\langle D\rangle^{-1}
&f_{1}\Big( \langle D\rangle^{-1}\sqrt{2}\Re(z),
\langle D\rangle^{-1/2}\sqrt{2}\Re(w)
\Big)= \\
&-\opbw\Big(\partial_{\theta_{xx}}f_1(\langle D\rangle^{-1}\sqrt{2}\Re(z),\langle D\rangle^{-\frac12}\sqrt{2}\Re(w)\Big)\langle\xi\rangle^{-\frac32}\xi^2\Big)\sqrt{2}\Re(w)
\end{aligned}\end{equation*}
up to terms contributing to the remainder $\mathcal{R}$ as in \eqref{stimaRRR}.

Let us now consider the nonlinear term $f_2$. This has the form
\begin{equation}\label{f2lifrati}
f_2(y,\theta)=a_2(x)y_{xx}^2+y_{xx}\check{f}_2^1(\check{y}_{xx})
+a_3(x)\theta_{xx}^2+\theta_{xx}\check{f}_2^2(\check{\theta}_{xx})
+\check{f}_2^3(\check{\theta}_{xx},\check{y}_{xx})\,,
\end{equation}
where $\check{\theta}_{xx}$ and $\check{y}_{xx}$ 
means that the function depends on all the variables but these ones. 
We proceed as for $f_1$. By using Lemma \ref{ParaMax} and Theorem \ref{compo} 
we infer
\begin{equation*}
\begin{aligned}
a_2(x)y_{xx}^2&=\opbw(a_2(x))y_{xx}^2+\opbw(y_{xx}^2)a_2(x)+R^{p}(y_{xx}^2,a_2(x))
\\&
=\opbw(a_2(x))(2\opbw(y_{xx})y_{xx}+R^p(y_{xx},y_{xx}))+R^{p}(y_{xx}^2,a_2(x))
\\&
=-2\underbrace{\opbw(a_2(x)y_{xx}\xi^2)y}_{(I)}-2 \underbrace{R^c_{\frac12}(a_2(x)y_{xx},\xi^2)y}_{(II)} 
+ \underbrace{\opbw(a_2(x))R^p(y_{xx},y_{xx})}_{(III)}
\\&
\,\,\,\,+\underbrace{R^{p}(y_{xx}^2,a_2(x))}_{(IV)}\,.
\end{aligned}
\end{equation*}
After passing in complex coordinates the terms $(II)$, $(III)$ and $(IV)$ 
give a contribution to $\mathcal{R}$. We prove this fact for $(II)$. One has
\begin{equation*}
\begin{aligned}
\|\langle D\rangle^{-\frac12}R^c_{\frac12}(a_2(x)&\langle D\rangle^{-1}\Re(z_{xx}),\xi^2)
\langle D\rangle^{-1} \Re(z)\|_{H^s}
\\&
\leq\|R^c_{\frac12}(a_2(x)\langle D\rangle^{-1}\Re(z_{xx}),\xi^2)
\langle D\rangle^{-1} \Re(z)\|_{H^{s-1/2}}
\\&
\lesssim_s \|\langle D\rangle^{-1}z_{xx}\|_{H^{s_0+1/2}}\|\langle D\rangle^{-1}z\|_{H^{s+2-1/2-1/2}}
\\&
\lesssim_s \|z\|_{H^{s_0+3/2}}\|z\|_{H^{s}}\,,
\end{aligned}
\end{equation*}
where we have used \eqref{resto}. 
The proofs for $(III)$ and $(IV)$ are similar to the ones performed previously for $f_1$, 
therefore they are omitted. 
The addend $(I)$ is not a remainder and gives a contribution 
to the term of order $1/2$. 
Indeed by Theorem \ref{compo} we have
\begin{equation*}
\begin{aligned}
\langle D\rangle^{-\frac12}
\opbw(a_2(x)\langle D\rangle^{-1}\Re(z_{xx})\xi^2)&\langle D\rangle^{-1}\Re(z)
\\&=\opbw(a_2(x)\langle D\rangle^{-1}\Re(z_{xx})\xi^2\langle\xi\rangle^{-\frac32})\Re(z)\,,
\end{aligned}
\end{equation*}
modulo remainders that may be absorbed in $\mathcal{R}$.
The symbol of the paradifferential operator above is a contribution to 
the symbol $B_{w}(x,\x)$ (see \eqref{matricitotali}, \eqref{simbolowaveoff})
and satisfies \eqref{figaro3}.

\smallskip

Coming back to \eqref{f2lifrati}, we restrict the discussion to the paralinearization of the term $a_3(x)\theta_{xx}^2$,  the proof for $\theta_{xx}\check{f}_2^2(\check{\theta}_{xx})$ being very similar.  Finally, the term $\check{f}_2^3$ has to be handled exactly as $\check{f}_1^2$ in \eqref{restaccio}. We have
\begin{align*}
a_3(x)\theta_{xx}^2&=-2{\opbw(a_3(x)\theta_{xx}\xi^2)\theta}-2{R^c_{1}(a_3(x)\theta_{xx},\xi^2)\theta} + {\opbw(a_3(x))R^p(\theta_{xx},\theta_{xx})}\\
&\,\,\,\,+{R^{p}(\theta_{xx}^2,a_3(x))}.
\end{align*}
We study the contribution coming from the first two summands in the r.h.s, all the other summand may be absorbed in $\mathcal{R}$ reasoning as above. The contribution, in complex coordinates, of the first summand is
\begin{equation}\label{martinez}
\begin{aligned}
-4\langle D\rangle^{-1/2}\opbw(a_3(x)\langle D\rangle^{-\frac12}
&\Re(w_{xx})\xi^2)\langle D\rangle^{-1/2}\Re(w)
\\&
=-4\opbw(a_{3}(x)\langle D\rangle^{-\frac12}\Re(w_{xx})\xi^2 \langle{\xi}\rangle^{-1})\Re(w)\,,
\end{aligned}\end{equation}
modulo a remainder that may be absorbed in  $\mathcal{R}$.
In particular we note that (recall Def. \ref{def:simbolini})
\begin{equation}\label{diff-simbo}
\frac{\x^{2}}{\langle\x\rangle}-|\x|=-\frac{|\xi|}{\xi+\langle\xi\rangle}
\frac{1}{\langle\xi\rangle}\in \Gamma^{-1}_{s}\,,\quad \forall s\in\R\,.
\end{equation}
Therefore
\begin{equation}\label{figaro5}
\eqref{martinez}=
-4\opbw(a_{3}(x)\langle D\rangle^{-\frac12}\Re(w_{xx})|\xi|){\rm Re}(w)
+\widetilde{Q}_{2}(z,w)\,,
\end{equation}
where
\[
\widetilde{Q}_{2}(z,w)=
\opbw\Big(a_{3}(x)\langle D\rangle^{-\frac12}\Re(w_{xx})
\big(\frac{\x^{2}}{\langle\x\rangle}-|\x| \big) \Big){\rm Re}(w)\,.
\]
The symbol in the paradifferential operator in \eqref{figaro5}
can be absorbed in the symbol \eqref{matricitotali} 
(see also \eqref{matriceregowave}-\eqref{simbolowave})
and satisfies  \eqref{figaro4}.
Moreover, 
using Theorem \ref{azione}, we deduce that $\widetilde{Q}_2$ 
can be absorbed in the remainder 
$\mathcal{R}$ satisfying \eqref{stimaRRR}. We explain why $-2{R^c_{1}(a_3(x)\theta_{xx},\xi^2)\theta}$ may be absorbed in the remainder $\mathcal{R}$: passing in complex coordinates we have 
\begin{align*}
\|\langle D\rangle^{-\frac12}R_1^c
(\langle D \rangle^{-\frac12}&\Re(w_{xx}),\xi^2)\langle D\rangle^{-\frac12}\Re(w)\|_{H^s}
\\&
\leq 
\| R_1^c(\langle D \rangle^{-\frac12}\Re(w_{xx}),\xi^2)\langle D\rangle^{-\frac12}\Re(w)\|_{H^{s-1/2}}
\\&
\lesssim_s 
\|\langle D\rangle^{-\frac12}w_{xx}\|_{s_0+1}\|\langle D\rangle^{-\frac12}w\|_{H^{s+2-1-1/2}}
\\&
\lesssim_s \|w\|_{H^{s_0+5/2}}\|w\|_{s}\,.
\end{align*}
This concludes the discussion on the nonlinear part.

\smallskip
We now rewrite the linear part of system \eqref{sistemaponte} 
as paradifferential operators modulo bounded remainders.
By
 \eqref{opastratti}, \eqref{opside1} in Lemma \ref{lem:formexplicBW}, 
 and using  Theorem \ref{compo} 
we infer that
\[
\langle D\rangle^{-1}\mathcal{B}\langle D\rangle^{-1}
=\opbw\big(- b(x)|\x|^2 -2\ii b_x(x)\x\big)\,+\widetilde{\mathtt{R}}_1\,,
\]
for some $\widetilde{\mathtt{R}}_1$
 satisfying \eqref{restilineari}.
Similarly, using \eqref{opastratti}, \eqref{opside1}  
and Theorem \ref{compo}, we get 
\[
\langle D\rangle^{-1/2}\mathcal{W}\langle D\rangle^{-1/2}
=\opbw(-c(x)|\x|)+\widetilde{\mathtt{R}}_2\,,
\]
for some $\widetilde{\mathtt{R}}_2$
 satisfying \eqref{restilineari}.
 It is enough (recalling \eqref{compbeamequ}, \eqref{compwaveequ})
 to define
 \[
 \mathtt{R}_1:=\widetilde{\mathtt{R}}_1+\frac{\alpha}{2}\sm{1}{-1}{-1}{1}\,,
 \qquad 
  \mathtt{R}_2:=\widetilde{\mathtt{R}}_2+\frac{\beta}{2}\sm{1}{-1}{-1}{1}
 \]
 to get item $(ii)$.
\end{proof}

\begin{remark}\label{rmk:datiiniziali}
Proposition \ref{paraparaProp} implies that
given solutions (if they exist) $V(t)=(Z(t),W(t))$ of \eqref{ponteparato}
(see also \eqref{inverse})
with initial conditions 
\begin{equation}\label{tavolo}
Z(0)=Z_0=\vect{z_0}{\bar{z}_0}\in \mathcal{H}^{s}\,,\qquad
W(0)=W_0=\vect{w_0}{\bar{w}_0}\in \mathcal{H}^{s}\,,
\end{equation}
one can recover solutions $(y(t), \theta(t))$ 
of the system \eqref{sistemaponte} in real coordinates
by  means of formula \eqref{inverse} and 
with initial conditions 
\[
\begin{aligned}
y_0:=y(0)&=\frac{1}{\sqrt{2}}\langle D\rangle^{-1}(z_0+\bar{z}_0)\,,\qquad 
y_1:=(\pa_{t}y)(0)=\frac{1}{\ii\sqrt{2}}\langle D\rangle(z_0-\bar{z}_0)\,,
\\
\theta_0:=\theta(0)&=\frac{1}{\sqrt{2}}\langle D\rangle^{-1/2}(w_0+\bar{w}_0)\,,\qquad 
\theta_1:=(\pa_{t}\theta)(0)=\frac{1}{\ii\sqrt{2}}\langle D\rangle^{1/2}(w_0-\bar{w}_0)\,.
\end{aligned}
\]
By using \eqref{tavolo} we have
\[
(y_0,y_1)\in H^{s+1}\times H^{s-1}\,,
\qquad
(\theta_0,\theta_1)\in H^{s+\frac{1}{2}}\times H^{s-\frac{1}{2}}\,.
\]
\end{remark}

\section{Modified energy}\label{sec:modene}
We consider a simplified version of equation \eqref{ponteparato}.
More precisely, recalling \eqref{parametrifissati}, we  consider a fixed function  
$\widetilde{V}=(\widetilde{Z}, {\widetilde{W}} )\in L^{\infty}([0,T];\mathcal{H}^{s_*}\times\mathcal{H}^{s_*})$ with $s_*\geq s_1+2$,
for some $T>0$,
such that 
\begin{align}
&\|\widetilde{Z}\|_{L^{\infty}H^{s_1}}\leq r\,, 
\quad 
\|\widetilde{Z}\|_{L^{\infty}H^{s_1+2}}
+\|\partial_t\widetilde{Z}\|_{L^{\infty}H^{s_1}}\leq R\,,\label{palla-beam}
\\&
\|\widetilde{W}\|_{L^{\infty}H^{s_1}}\leq r\,, 
\quad 
\|\widetilde{W}\|_{L^{\infty}H^{s_1+1}}
+\|\partial_t\widetilde{W}\|_{L^{\infty}H^{s_1}}\leq R\,,\label{palla-wave}
\end{align}
{for some $r, R>0$.}
%
{Let $s\geq 0$, } we consider the linear problem
\begin{equation}\label{linprob}
\pa_{t}V=\mathfrak{A}(\widetilde{V})V+
\mathfrak{B}(\widetilde{V})V\,,\qquad V(0)=V_0=\vect{Z_0}{W_0}\in \mathcal{H}^{s}\times \mathcal{H}^{s}\,,
\end{equation}
where the operators 
$\mathfrak{A}(\widetilde{V})$, $\mathfrak{B}(\widetilde{V})$ are the ones given 
in Proposition \ref{paraparaProp}. The aim of this section is to provide \emph{a priori} estimates on the solutions of \eqref{linprob}. To this end, we shall diagonalize, {modulo bounded remainders,} the operator $\mathfrak{A}(\widetilde{V})+\mathfrak{B}(\widetilde{V})$. In order to obtain so, we shall proceed as follows.\\
- We first construct a \emph{parametrix} $\Psi(\widetilde{V})$, i.e. an approximatively invertible map on $\mathcal{H}^s{\times\mathcal{H}^s}$, which conjugates  $\mathfrak{A}(\widetilde{V})+\mathfrak{B}(\widetilde{V})$ to a diagonal operator, modulo semilinear remainders. This is the content of Proposition \ref{prop:costruzioneMappa}. \\
- Then, by means of this parametrix, we construct a \textit{modified norm} on $\mathcal{H}^s\times\mathcal{H}^s$ which is tailored to the problem \eqref{linprob} and, in Lemma \ref{equivalenzatotale}, we  prove that this norm is {almost} equivalent to the classical norm on $\mathcal{H}^s\times\mathcal{H}^s$. This enables us to use this modified norm to prove the desired energy bound, this is the content of Proposition \ref{stima-energia-totale}. To  achieve the energy estimates, a fundamental ingredient is a Garding-type inequality, this is the content of Lemma \ref{Garding-deltone}.

\medskip

\textbf{Notation.} For convenience, in the rest of the paper, we use the notation $C_r$ and $C_R$ to denote constants that depend on $r$ and $R$ respectively, and possibly on the Sobolev-regularity indexes and the Sobolev norm of the functions $a, b, c, d$. These constants may change from line to line. We always suppose that $C_R\gg C_r$, therefore we shall write $C_rC_R=C_R$.

\subsection{Construction of the parametrix}
In the following proposition we construct the aforementioned parametrix.
\begin{proposition}\label{prop:costruzioneMappa}
{Let $s\geq 0$}.
Assume \eqref{palla-beam}-\eqref{palla-wave}, then there are maps $\Phi=\Phi(\widetilde{V})$, $\Psi=\Psi(\widetilde{V})$
in  $\mathcal{L}\big(\mathcal{H}^{s}\times\mathcal{H}^{s};\mathcal{H}^{s}\times\mathcal{H}^{s}\big)
$
satisfying the following:

\noindent
$(i)$ The equality
\begin{equation}\label{fessemamt}
\Phi\circ (\mathfrak{A}(\widetilde{V})+\mathfrak{B}(\widetilde{V}))\circ \Psi=\Lambda+\mathcal{M} 
\end{equation}
holds, where $\mathcal{M}\in 
\mathcal{L}\big(\mathcal{H}^{s}\times\mathcal{H}^{s};\mathcal{H}^{s}\times\mathcal{H}^{s}\big)$
and 
\begin{equation}\label{diagonalissima}
\Lambda=\Lambda(\widetilde{V})=\left( \begin{matrix}-\ii E\opbw(\lambda_{b}\,\x\,^{2}) & 0 \\0
& -\ii E\opbw(\lambda_{w}|\x|)\end{matrix}\right)\,,
\end{equation}
for some functions $\lambda_b(x)$, $\lambda_w(x)=\lambda_{w}(\widetilde{V};x)$ satisfying 
\begin{equation}\label{olio2}
\begin{aligned}
\|\lambda_b-1\|_{H^{p}}&\lesssim_{p} 1\,,\qquad \qquad \quad\,\,\quad {p>1/2}
\\
\|\lambda_w-1\|_{H^{p}}&\lesssim_{p}\|\widetilde{W}\|_{H^{p+3/2}}\,,\qquad {s_0\le p\leq s_*-3/2\,.}
\end{aligned}
\end{equation}
Moreover 
\begin{equation}\label{olio5}
\|\mathcal{M}V\|_{H^s}\lesssim_{s} \|V\|_{H^{s}}\|\widetilde{V}\|_{H^{s_1}}\,,\quad 
\forall\, V\in \mathcal{H}^{s}\times\mathcal{H}^{s}\,.
\end{equation}

\noindent
$(ii)$ The operator  $\Psi\circ \Phi-\Id_{\mathbb{C}^4}$
belongs to
 $\mathcal{L}\big(\mathcal{H}^{s}\times\mathcal{H}^{s};\mathcal{H}^{s+2}\times\mathcal{H}^{s+2}\big)$, and we have the estimates
\begin{equation}\label{stimephipsi}
\begin{aligned}
\|\Phi(\widetilde{V})[\cdot]\|_{\mathcal{L}(H^s;H^s)}\,, 
\, \|\Psi(\widetilde{V})[\cdot]\|_{\mathcal{L}(H^s;H^s)}&\leq C_r\,,
\\
\|\Psi\circ \Phi-\Id_{\mathbb{C}^4}\|_{{\mathcal{L}(H^s;H^{s+2})}}&\leq {C_{R}}\,.
\end{aligned}
\end{equation} 

 \noindent
 $(iii)$ The map $\Phi$ has the form
 \begin{equation}\label{expaMappa}
 \Phi=\left(\begin{matrix}\mathtt{D}_{b} & 0 \\ 0 & \mathtt{D}_{w}
 \end{matrix}\right)\circ \left( \left(\begin{matrix}\Id_{\mathbb{C}^2} & 0 \\ 0 &\Id_{\mathbb{C}^2}\end{matrix}\right)  + 
 \left(\begin{matrix}0 & \mathtt{T}_1 \\ \mathtt{T}_2 &0\end{matrix}\right) 
 \right),
 \end{equation}
 where $\mathtt{T}_i=\opbw(T_{i}(x,\x))$ for some symbols 
 satisfying 
 \begin{equation}\label{olio3}
 |T_{i}|_{-\frac{3}{2},p,\alpha}\lesssim_{p, \alpha} \| \widetilde{W} \|_{H^{p+3/2}}
\,,\quad s_0\le p\leq s_*-3/2,\,\,\forall \alpha\in\mathbb{N}\,.  
 \end{equation}
 Moreover the operators $\mathtt{D}_{b},\mathtt{D}_{w}$
 satisfy
 \begin{equation}\label{olio}
 \|\mathtt{D}_{b}\|_{\mathcal{L}(H^s;H^s)}\,, \|\mathtt{D}_{w}\|_{\mathcal{L}(H^s;H^s)}\leq C_{r} \qquad {\forall s\geq 0}\,.
\end{equation}

\noindent $(iii)$ The operator $\pa_{t}\Phi$
 belongs to 
 $\mathcal{L}\big(\mathcal{H}^{s}\times\mathcal{H}^{s};\mathcal{H}^{s}\times\mathcal{H}^{s}\big)$
 and 
 \begin{equation}\label{stimatempoaltempo}
 \|\pa_{t}\Phi\|_{\mathcal{L}(H^{s};H^{s})}\leq C_{R}\,.
 \end{equation}

 
\end{proposition}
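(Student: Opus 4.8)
The plan is to build $\Phi$ as a composition of two conjugations, matching the structure \eqref{expaMappa}: first an off-diagonal one, $\Id_{\mathbb{C}^4}+\mathtt{T}$ with $\mathtt{T}=\sm{0}{\mathtt{T}_1}{\mathtt{T}_2}{0}$, which decouples the beam and wave blocks, and then a block-diagonal one, $\mathtt{D}=\diag(\mathtt{D}_b,\mathtt{D}_w)$, which diagonalizes the residual $2\times 2$ matrix structure inside each block and brings $\mathfrak{A}+\mathfrak{B}$ to the scalar form \eqref{diagonalissima}; concretely one writes $\Phi\circ(\mathfrak{A}+\mathfrak{B})\circ\Psi=\mathtt{D}\circ(\Id_{\mathbb{C}^4}+\mathtt{T})\circ(\mathfrak{A}+\mathfrak{B})\circ(\Id_{\mathbb{C}^4}-\mathtt{T})\circ\mathtt{D}^{-1}$, with $\Psi:=(\Id_{\mathbb{C}^4}-\mathtt{T})\circ\mathtt{D}^{-1}$ and $\mathtt{D}^{-1}$ a parametrix of $\mathtt{D}$. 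The only tools needed are the symbolic calculus of Theorem \ref{compo}, the action estimates of Theorem \ref{azione} and Remark \ref{rmk:bonynonbony}, the ellipticity in Hypothesis \ref{hyp1} and the smallness condition \eqref{explicitR}. The recurring bookkeeping point is that, since $s_1=s_0+\tfrac32$ by \eqref{parametrifissati}, the $H^{s_0}$-norm of every $\widetilde{V}$-dependent symbol coefficient is controlled by the low norm $\|\widetilde{V}\|_{H^{s_1}}$ — as already recorded in \eqref{figaro3}--\eqref{figaro4} — which is precisely what will produce the semilinear bound \eqref{olio5}.

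\medskip

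\noindent\textbf{Step 1: decoupling.} Conjugating $\mathfrak{A}+\mathfrak{B}$ by $\Id_{\mathbb{C}^4}+\mathtt{T}$ gives, modulo the block-diagonal term $-\mathtt{T}(\mathfrak{A}+\mathfrak{B})\mathtt{T}$ of order $-1$, the operator $\mathfrak{A}+\mathfrak{B}+[\mathtt{T},\mathfrak{A}+\mathfrak{B}]$, whose off-diagonal part is $\mathfrak{B}+[\mathtt{T},\mathfrak{A}]$ up to order $-1$. The beam block $\mathfrak{A}_{11}=-\ii E\opbw(A_b)$ is elliptic of order $2$ — by Hypothesis \ref{hyp1} the determinant of its principal symbol equals $b(x)\x^4\ne0$ for $\x\ne0$ — and $[\mathtt{T},\mathfrak{A}]$ acts on $\mathtt{T}_1$, $\mathtt{T}_2$ at leading order by left, resp.\ right, multiplication by $\mathfrak{A}_{11}$; hence the homological equation asking $\mathfrak{B}+[\mathtt{T},\mathfrak{A}]$ to be of order $\le0$ is solved, divisor-free, by inverting $\mathfrak{A}_{11}$. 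This produces $\mathtt{T}_i=\opbw(T_i)$ with $T_i\in\Gamma^{-3/2}$, the bounds \eqref{olio3} following from \eqref{figaro3} together with the fact that $\mathfrak{A}_{11}^{-1}$ is a paradifferential operator of order $-2$ with bounded symbol seminorms (cf.\ \eqref{opside2}). After this choice the operator is block-diagonal, with $(1,1)$- and $(2,2)$-blocks $-\ii E\opbw(A_b)$ and $-\ii E\opbw(A_w)$, up to operators of order $\le0$ with seminorms $\lesssim\|\widetilde{V}\|_{H^{s_1}}$, which go into $\mathcal{M}$.

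\medskip

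\noindent\textbf{Step 2: diagonalization and parametrix.} It remains to diagonalize the $2\times2$ matrices $E(\Id_{\mathbb{C}^2}+\bU a(x))=\sm{1+a}{a}{-a}{-(1+a)}$ and $E(\Id_{\mathbb{C}^2}+\bU a_w(\widetilde{V};x))$. The first has the real simple eigenvalues $\pm\sqrt{1+2a}=\pm\sqrt{b}$, bounded away from $0$ by Hypothesis \ref{hyp1}; the second has eigenvalues $\pm\sqrt{1+2a_w}=\pm\sqrt{c+\partial_{\theta_{xx}}F_2}$, which are real and bounded away from $0$ on $B_R$ by \eqref{explicitR}. Hence there are conjugating matrices $P_b(x)$, $P_w(\widetilde{V};x)$ with $\|P_b^{\pm1}\|_{H^p}\lesssim_p1$ and $\|P_w^{\pm1}-\Id_{\mathbb{C}^2}\|_{H^p}\lesssim_p\|\widetilde{V}\|_{H^{p+3/2}}$; setting $\mathtt{D}_b=\opbw(P_b)$, $\mathtt{D}_w=\opbw(P_w)$ (plus lower-order corrections) and using Theorem \ref{compo}, the two blocks become $-\ii E\opbw(\lambda_b\x^2)$ and $-\ii E\opbw(\lambda_w|\x|)$ with $\lambda_b=\sqrt{b}$ and $\lambda_w=\sqrt{c+\partial_{\theta_{xx}}F_2}$, which satisfy \eqref{olio2}, up to a diagonal order-$1$ paradifferential operator of transport type, which I would remove by a further scalar conjugation $\opbw(e^{\phi_b(x)})$: this is solvable on $\T$ precisely because the relevant order-$1$ coefficient has zero mean, being (modulo lower order) proportional to $\partial_x\log b$. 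All remaining errors — the conjugates of the order-$\le0$ remainders from Step 1, the calculus remainders of Theorem \ref{compo} and the subprincipal terms just discarded — are of order $\le0$ with seminorms $\lesssim\|\widetilde{V}\|_{H^{s_1}}$, so that \eqref{fessemamt} and \eqref{olio5} hold, while \eqref{olio} follows from Theorem \ref{azione}. Finally $\Psi\circ\Phi-\Id_{\mathbb{C}^4}=-\mathtt{T}^2+(\text{smoothing})$ is of order $\le-2$, which gives $(ii)$: the $C_r$ bounds in \eqref{stimephipsi} follow from Theorem \ref{azione} (order-$0$ symbols with $\Gamma^0_{s_0}$-seminorms $\lesssim\|\widetilde{V}\|_{H^{s_1}}\le r$), whereas the $C_R$ bound there, and \eqref{stimatempoaltempo} — $\partial_t\Phi$ being linear in $\partial_t\widetilde{V}$ after differentiating the symbols — follow from the control of the higher norms of $\widetilde{V}$ and $\partial_t\widetilde{V}$ by $R$ in \eqref{palla-beam}--\eqref{palla-wave}.

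\medskip

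\noindent\textbf{Main obstacle.} The crux is to control orders and norms \emph{simultaneously}: one has to verify that the off-diagonal corrector $\mathtt{T}$ of order exactly $-\tfrac32$ genuinely eliminates the order-$\tfrac12$ coupling against the order-$2$ beam block — so that it is the gap between the beam ($\sim\x^2$) and wave ($\sim|\x|$) blocks that renders the homological equation divisor-free — and, at the same time, that \emph{every} leftover operator, including those arising from the non-exactness of the symbolic calculus and from the noncommutativity of $\mathtt{D}$ and $\mathtt{T}$, is of non-positive order with symbol seminorms costing only the low norm $\|\widetilde{V}\|_{H^{s_1}}$, so that $\mathcal{M}$ satisfies the \emph{semilinear} estimate \eqref{olio5} rather than a merely tame one (the larger constant $C_R$ being spent only on $\Psi\circ\Phi-\Id_{\mathbb{C}^4}$ and on $\partial_t\Phi$). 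A subsidiary technical point is the removal of the order-$1$ diagonal transport term in Step 2, which relies on the zero-mean (divergence) structure of its coefficient.
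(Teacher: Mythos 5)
Your construction is essentially the paper's: you diagonalize each $2\times2$ block with the eigenvector matrices of $E(\Id_{\C^2}+{\bf U}a)$ and $E(\Id_{\C^2}+{\bf U}a_w)$ (giving $\lambda_b=\sqrt{b}$, $\lambda_w=\sqrt{c+\partial_{\theta_{xx}}F_2}$, the latter elliptic thanks to \eqref{explicitR}), remove the diagonal order-one transport term by a scalar gauge conjugation $\opbw(e^{\phi_b})$, and kill the order-$\tfrac12$ beam--wave coupling with an off-diagonal corrector $\mathtt{T}=\opbw(T_i)$ of order $-\tfrac32$ whose homological equation is divisor-free because only the beam symbol $\lambda_b\xi^2$ needs to be inverted, the wave contribution $\mathtt{T}_i\circ\opbw(\lambda_w|\xi|)$ being already of order $\le 0$ — exactly the mechanism in the paper's proof (there the equations ${\mathtt{D}}_{b}E\opbw(B_b)\widetilde{\mathtt{D}}_w+E\opbw(\lambda_b\xi^2)\mathtt{T}_1=l.o.t.$, etc., solved by \eqref{olio10}--\eqref{olio11} with a cutoff $\psi$ at $\xi=0$). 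That you conjugate by $\Id_{\C^4}+\mathtt{T}$ before $\mathtt{D}$ rather than after, and take $\Psi=(\Id_{\C^4}-\mathtt{T})\mathtt{D}^{-1}$, is immaterial (and consistent with \eqref{expaMappa}); the estimates \eqref{olio}, \eqref{olio2}, \eqref{olio3}, \eqref{olio5}, \eqref{stimephipsi}, \eqref{stimatempoaltempo} are obtained from Theorems \ref{azione}, \ref{compo} and \eqref{palla-beam}--\eqref{palla-wave} just as in the paper.

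One step of your Step 2 is not correct as written: after conjugating the beam block by $\opbw(S_b^{\pm1})$ the leftover subprincipal symbol is $E\,2\ii{\bf U}a_x\xi$ (see \eqref{diago2beam}), and since ${\bf U}$ is the all-ones matrix this term is \emph{not} diagonal; a scalar (or diagonal) gauge conjugation cannot remove its off-diagonal part, because commutation with $E\opbw(\lambda_b\xi^2)$ produces off-diagonal entries of order (two plus the order of the conjugator). The paper inserts for this purpose the extra conjugation by $\Id_{\C^2}\pm\mathtt{M}_{-1}$ with the order $-1$ off-diagonal symbol $m=-\ii a_x/(\lambda_b\xi)\,\psi(\xi)$ (Lemma \ref{lem:diago-beam-uno}), again with divisor $\lambda_b\xi^2$, and only afterwards performs the gauge step of Lemma \ref{eliminazione}. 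Your parenthetical ``plus lower-order corrections'' could accommodate this, but the claim that only a diagonal transport term survives must be repaired, otherwise an unbounded order-one operator would pollute $\mathcal{M}$ and \eqref{olio5} would fail. A minor further imprecision: the diagonal coefficient to be gauged away is, after division by $2\lambda_b$, proportional to $\partial_x\sqrt{b}$ rather than $\partial_x\log b$; it is still an exact derivative of a periodic function, so your zero-mean solvability argument (equivalently the paper's explicit choice $k=\exp(2\lambda_b)$) goes through.
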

The proof of this proposition is divided in several steps. 
Actually, the operators $\mathtt{D}_{b},\mathtt{D}_{w}$ are paradifferential and we will provide an explicit expression for them
in the following.
First we need to diagonalize, separately,  the principal order operators in \eqref{matriceregobeam} and \eqref{matriceregowave}, this is done  in Lemmata \ref{lem:diagobeam} and \ref{lem:diagowave}. After that, in Lemma \ref{lem:diago-beam-uno}, we  diagonalize the first order matrix in the \eqref{matriceregobeam}, while in Lemma \ref{eliminazione} we eliminate the diagonal term of order one in \eqref{matriceregobeam}. All these changes of coordinates allow us to diagonalize the system \eqref{linprob} at the principal and subprincipal orders. 

We eventually eliminate, in the proof of the Proposition \ref{prop:costruzioneMappa}, the off diagonal term {of} $\mathfrak{B}(\widetilde{V})$ of order $1/2$.

\subsubsection{Diagonalization of the principal order for the beam.}
We define the following quantities related to the highest order matrix of the beam equation in  \eqref{matriceregobeam}
\begin{equation}\label{quantit-beam}
\begin{aligned}
&\lambda_b(x):=\sqrt{1+2a(x)}\,, 
\quad 
S_b:=\left(\begin{matrix}s_{{1,b}} &s_{2,b}\\
s_{2,b} &s_{1,b}\end{matrix}\right)\,, 
\quad 
S_b^{-1}=\left(\begin{matrix}s_{{1,b}} &-s_{2,b}\\
-s_{2,b} &s_{1,b}\end{matrix}\right)
\\&
s_{1,b}:=\frac{1+a(x)+\sqrt{1+2a(x)}}{\sqrt{2\lambda_b(x)(1+a(x)+\lambda_b(x))}}\,,
\quad s_{2,b}:=\frac{-a(x)}{\sqrt{2\lambda_b(x)(1+a(x)+\lambda_b(x))}}\,.
\end{aligned}
\end{equation}
{We point out that $\lambda_b(x)=\sqrt{b(x)}$ is well defined thanks to the hypothesis \ref{hyp1} ( the ellipticity condition).}

In the above quantities, the functions $\pm\lambda_b(x)$ 
are the eigenvalues of the matrix 
$E(\Id_{\C^2}+\mathbf{U}a(x))$, while {$S_{b}$} 
is the matrix of the eigenvectors associated to $E(\Id_{\C^2}+\mathbf{U}a(x))$. 
For these reasons we have 
\begin{equation}\label{diagobeam}
S_b^{-1}E(\Id_{\C^2}+\mathbf{U}a(x))S_b= E \lambda_b(x)\,.
\end{equation}
We have the following.

 \begin{lemma}\label{lem:diagobeam}
Recall \eqref{parametrifissati},  \eqref{matriceregobeam}. 
 One has that
 \begin{equation}\label{cliff}
 \|s_{1,b}-1\|_{H^{p}}+\|s_{2,b}\|_{H^{p}}+\|\lambda_{b}-1\|_{H^{p}}
 \lesssim_{p} 1 \qquad \forall p\geq s_0\,.
 \end{equation}
 Moreover
 \begin{align}
\!\!\! \opbw(S_b^{-1})E\opbw\big(A_b\big)\opbw(S_b)&= 
 E \opbw\Big(\lambda_b(x)\xi^2\Id_{\C^2}+2\ii{\bf U} a_x(x)\xi\Big)
 +R_{0,b}\,,\label{diago2beam}
 \end{align}
 where $R_{0,b}$ belongs to $\mathcal{L}(\mathcal{H}^s; \mathcal{H}^s)$ 
 for $s\geq 0$, {with operator norm bounded form above by some $C>0$
 depending on $s$ and $\|a\|_{H^{{s}_0+1}}$ }.
 \end{lemma}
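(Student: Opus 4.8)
The plan is to prove \eqref{cliff} directly from the explicit formulas \eqref{quantit-beam} and then deduce \eqref{diago2beam} via the composition theorem together with the algebraic identity \eqref{diagobeam}.

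\textbf{Step 1: Sobolev bounds on the entries.} First I would observe that by Hypothesis \ref{hyp1} the function $a(x)=\tfrac12(b(x)-1)$ satisfies $1+2a(x)=b(x)>\mathtt{c}_1>0$ on all of $\T$, so $\lambda_b(x)=\sqrt{1+2a(x)}=\sqrt{b(x)}$ is smooth and bounded away from zero. Consequently $1+a(x)+\lambda_b(x)$ is also smooth and bounded away from zero, so the denominators $\sqrt{2\lambda_b(x)(1+a(x)+\lambda_b(x))}$ appearing in $s_{1,b},s_{2,b}$ are smooth, positive and bounded below. Then $s_{1,b}-1$, $s_{2,b}$ and $\lambda_b-1$ are obtained from $a$ by composition with smooth functions that vanish appropriately at $a=0$ (indeed at $a=0$ one has $\lambda_b=1$, $s_{1,b}=1$, $s_{2,b}=0$). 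Hence the estimate \eqref{cliff} follows from the standard composition (Moser) estimate on $H^p$ for $p\geq s_0>1/2$, since $H^p$ is an algebra there; the implicit constant depends on $p$ and on $\|a\|_{H^p}$, i.e. on $\|b\|_{H^p}$, which is exactly the meaning of $\lesssim_p$ fixed in the notation after \eqref{simbolowaveoff}.

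\textbf{Step 2: symbolic calculus for the conjugation.} For \eqref{diago2beam} I would write $\opbw(S_b^{-1})E\opbw(A_b)\opbw(S_b)$ and apply the composition Theorem \ref{compo} twice, with $\rho=2$ (so that $\#_2$ keeps the Poisson bracket term). The symbol $S_b^{-1}$ is of order $0$, $EA_b$ of order $2$ (recall \eqref{opside2}), and $S_b$ of order $0$; the constant-in-$\xi$ matrices commute with $E$ so $E\opbw(A_b)=\opbw(EA_b)$ up to nothing. Expanding, the principal symbol of the product is $S_b^{-1}\,E\,(\Id_{\C^2}+\mathbf{U}a(x))\,\xi^2\,S_b$, which by the diagonalization identity \eqref{diagobeam} equals $E\lambda_b(x)\xi^2$. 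The subprincipal contributions are of two kinds: (a) the genuine subprincipal symbol $2\ii\,S_b^{-1}E\mathbf{U}a_x(x)\,\xi\,S_b$ of order $1$, plus (b) the Poisson-bracket corrections $\tfrac1{2\ii}\{S_b^{-1}, EA_b\}$, $\tfrac1{2\ii}\{S_b^{-1}\# (EA_b), S_b\}$ etc., each of order $1$. All of these order-$1$ terms are collected; the ones arising from Poisson brackets of the $\xi^2$-part with the $x$-dependent conjugators are $x$-dependent matrices times $\xi$, and here one uses that the problem only requires diagonalization \emph{at positive orders}, so it suffices to keep the diagonal order-$1$ piece $2\ii\mathbf{U}a_x(x)\xi$ — wait, more precisely: the statement asserts the right-hand side is $E\opbw(\lambda_b\xi^2\Id_{\C^2}+2\ii\mathbf{U}a_x(x)\xi)$ plus a \emph{bounded} (order $0$) remainder $R_{0,b}$; so in fact the off-diagonal order-$1$ terms produced by the conjugation must be shown to cancel or to be reabsorbed. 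I would verify this cancellation by noting that $S_b$ is chosen precisely so that $S_b^{-1}E(\Id+\mathbf{U}a)S_b$ is diagonal, and that the subprincipal symbol of the beam operator, $2\ii\mathbf{U}a_x\xi$, is conjugated to $2\ii S_b^{-1}E^{-1}E\mathbf{U}a_x S_b\,\xi$; one then checks — using $E\mathbf{U}=\sm{1}{1}{-1}{-1}$ and the explicit $S_b$ — that $S_b^{-1}E\mathbf{U}a_x S_b$ differs from $\mathbf{U}a_x$ only by terms involving derivatives of $S_b$, i.e. terms multiplied by $s_{i,b,x}$, which together with the Poisson-bracket terms are of order $\leq 0$ after accounting for the $\langle\xi\rangle$ weights; hence they all go into $R_{0,b}$. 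Finally the remainder operators $R^c_2(\cdot,\cdot)$ from Theorem \ref{compo} map $H^s\to H^{s+2-2}=H^s$ with norm controlled by the seminorms of the symbols at regularity $s_0+2\leq s_1+1$, hence by $\|a\|_{H^{s_0+1}}$ (using that one $x$-derivative of $a$ already appears), and are therefore bounded; this gives $R_{0,b}\in\mathcal{L}(\mathcal{H}^s;\mathcal{H}^s)$ for $s\geq 0$ with norm depending on $s$ and $\|a\|_{H^{s_0+1}}$, as claimed.

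\textbf{Main obstacle.} The only delicate point is bookkeeping in Step 2: one must check carefully that \emph{all} the order-$1$ corrections produced by the two compositions — the conjugated subprincipal symbol $2\ii S_b^{-1}E\mathbf{U}a_x S_b\xi$ together with the various Poisson brackets $\{S_b^{-1},\lambda_b\xi^2\}$-type terms — reorganize exactly into $E\,2\ii\mathbf{U}a_x\xi$ modulo symbols of order $0$, rather than leaving a spurious $x$-dependent order-$1$ off-diagonal piece. This is a finite computation using the explicit entries \eqref{quantit-beam}, the identity $\lambda_b^2=1+2a$, and $2\lambda_b(1+a+\lambda_b)=(1+a+\lambda_b)^2-a^2+\cdots$ type relations; I expect it to work because the conjugating matrix $S_b$ is built to be symplectic-like (the explicit $S_b^{-1}$ being the "$E$-adjoint" pattern $\sm{s_1}{-s_2}{-s_2}{s_1}$), but it requires patience to write out. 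The estimate \eqref{cliff} itself is routine once Hypothesis \ref{hyp1} is invoked to keep all denominators away from zero.
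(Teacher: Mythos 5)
Your Step 1 is fine and is exactly the paper's argument (explicit formulas \eqref{quantit-beam} plus Moser-type composition estimates). The genuine gap is in Step 2, at precisely the point you defer as ``bookkeeping''. First, conjugating the subprincipal symbol produces no derivatives of $S_b$ at all: since $E\mathbf{U}=\sm{1}{1}{-1}{-1}$ is rank one and $S_b$ has the symmetric form \eqref{quantit-beam} with $s_{1,b}^2-s_{2,b}^2=1$, one computes exactly $S_b^{-1}\,(E\mathbf{U}a_x)\,S_b=(s_{1,b}+s_{2,b})^2\,a_x\,E\mathbf{U}=\lambda_b^{-1}a_x\,E\mathbf{U}$, because $s_{1,b}+s_{2,b}=\lambda_b^{-1/2}$ (use $2\lambda_b(1+a+\lambda_b)=\lambda_b(1+\lambda_b)^2$). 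Hence the conjugated subprincipal symbol is $2\ii\lambda_b^{-1}a_x\,\xi\,E\mathbf{U}$, and its difference from the displayed $2\ii a_x\,\xi\,E\mathbf{U}$ is itself of order one; it cannot be ``reabsorbed'' into a bounded $R_{0,b}$, contrary to what you assert. Second, the two Poisson-bracket corrections produced by Theorem \ref{compo}, namely $\tfrac{1}{2\ii}S_b^{-1}\{E(\Id_{\C^2}+\mathbf{U}a)\xi^2,S_b\}$ and $\tfrac{1}{2\ii}\{S_b^{-1},E(\Id_{\C^2}+\mathbf{U}a)S_b\,\xi^2\}$, are each of order one (each carries a factor $\xi$), not of order $\le 0$ as you claim; they disappear not because of $\langle\xi\rangle$ weights but because they cancel each other exactly: using $E(\Id_{\C^2}+\mathbf{U}a)S_b=S_bE\lambda_b$ and $\partial_x(S_b^{-1})S_b=-S_b^{-1}\partial_xS_b$, their sum equals $-\ii\xi\lambda_b\big(E\,(S_b^{-1}\partial_xS_b)+(S_b^{-1}\partial_xS_b)\,E\big)$, which vanishes since $S_b^{-1}\partial_xS_b$ is antidiagonal (again by $s_{1,b}^2-s_{2,b}^2=1$) and hence anticommutes with $E$.

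So the ``finite computation'' you postpone is the actual content of \eqref{diago2beam}, and the mechanism you sketch for it is wrong; moreover, once carried out it gives $E\opbw\big(\lambda_b(x)\xi^2\Id_{\C^2}+2\ii\lambda_b^{-1}a_x(x)\mathbf{U}\xi\big)$ plus a bounded operator, i.e.\ the order-one coefficient comes out as $a_x/\lambda_b=\partial_x\lambda_b$ rather than $a_x$, so the route you describe cannot terminate in the displayed right-hand side with $R_{0,b}$ bounded on $\mathcal{H}^s$. The paper's own proof is the same two-line scheme — \eqref{cliff} by composition estimates, \eqref{diago2beam} by \eqref{diagobeam} plus Theorem \ref{compo} with $\rho=2$ — and does not record this order-one computation either; if you redo it, state explicitly the order-one symbol you obtain (a smooth function times $\mathbf{U}\xi$), since that structure is what Lemmas \ref{lem:diago-beam-uno} and \ref{eliminazione} actually use downstream. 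Your Step 1 and your observation that the composition remainders have order $2+0-2=0$ are correct.
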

 \begin{proof}
 The bound \eqref{cliff} follows by the explicit expression \eqref{quantit-beam}
 and classical composition estimates on Sobolev spaces.
Formula \eqref{diago2beam} 
follows by \eqref{diagobeam} and Theorem \ref{compo} with $\rho=2$ .
 \end{proof}
 \subsubsection{Diagonalization at subprincipal order for the beam equation.}
 Here we aim at diagonalizing the subprincipal matrix of symbols in the r.h.s. of \eqref{diago2beam}. We define the following symbol and the related operator of order $-1$
 \begin{equation}\label{generatore-diago-ord-1}
\mathtt{M}_{-1}:=\opbw\left(\begin{matrix}
 0&m(x,\xi)\\
 m(x,\xi)&0
\end{matrix}\right)\,, \quad m(x,\xi):=-\ii \frac{a_x(x)}{\lambda_b(x)\xi}\psi(\x)\,, 
 \end{equation}
 for some $\psi\in C^{\infty}(\R)$ satisfying $\psi(\x)=1$ if $|\x|\geq 1/2$ 
 and $\psi(\x)=0$ for $|\x|\leq 1/4$. {We remark that $\lambda_b(x)$ never vanishes thanks to the ellipticity condition \ref{hyp1}.}

 We have the following result.
 \begin{lemma}\label{lem:diago-beam-uno}
Recall \eqref{parametrifissati}.
 We have
 that $m=m(x,\x)$  belongs to $\Gamma^{-1}_{p}$, $p\geq {s}_0$ and
 satisfies 
 \begin{equation}\label{cliff2}
 |m|_{-1,p,\alpha}\lesssim_{p,\alpha} 1\,,\qquad \forall\alpha\in \N\,.
 \end{equation} 
 Moreover, recalling \eqref{diago2beam}, 
 \begin{equation}\label{diagodiago}
 \begin{aligned}
 (\Id_{\C^2}+\mathtt{M}_{-1})E
 \opbw\Big(\lambda_b(x)\xi^2\Id_{\C^2}+2\ii{\bf U} a_x(x)\x\Big)
 &(\Id_{\C^2}-\mathtt{M}_{-1})
 \\&=
 E\opbw\Big(\lambda_b(x)\xi^2+2\ii a_x(x)\xi\Big)+R_0,
\end{aligned}
 \end{equation}
 where $R_0$ belongs to ${\mathcal{L}(\mathcal{H}^s; \mathcal{H}^s)}$
 for $s\geq 0$,
  {with operator norm bounded form above by some $C>0$
 depending on $s$ and $\|a\|_{H^{{s}_0+2}}$.}
 \end{lemma}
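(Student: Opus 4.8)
The estimate \eqref{cliff2} is elementary; the heart of the Lemma is the conjugation identity \eqref{diagodiago}, which is a first--order normal form computation carried out through the symbolic calculus of Theorem \ref{compo}. The coefficient of $m$ in \eqref{generatore-diago-ord-1} is tuned so that a purely algebraic $2\times2$ commutator removes the off--diagonal part of the order--one symbol $2\ii{\bf U}a_x(x)\xi$ produced in \eqref{diago2beam}.

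\textbf{Step 1: the symbol estimate.} Write $m(x,\xi)=-\ii\,\frac{a_x(x)}{\lambda_b(x)}\,\frac{\psi(\xi)}{\xi}$. Since $\psi$ vanishes near $\xi=0$, the map $\xi\mapsto\psi(\xi)/\xi$ is $C^\infty(\R)$ with $|\partial_\xi^\beta(\psi(\xi)/\xi)|\lesssim_\beta\langle\xi\rangle^{-1-\beta}$; by Hypothesis \ref{hyp1} the function $\lambda_b=\sqrt{1+2a}=\sqrt b$ is smooth and bounded away from zero, so $a_x/\lambda_b\in H^p$ with $\|a_x/\lambda_b\|_{H^p}\lesssim_p1$ for every $p\ge s_0$, by the algebra property of $H^{s_0}$ and composition estimates. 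Multiplying, $m\in\Gamma^{-1}_p$ with $|m|_{-1,p,\alpha}\lesssim_{p,\alpha}1$ for all $\alpha\in\N$, which is \eqref{cliff2}. In particular $\mathtt M_{-1}=\opbw\sm{0}{m}{m}{0}$ has order $-1$, hence by Theorem \ref{azione} maps $\mathcal H^s$ into $\mathcal H^{s+1}$ for every $s$.

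\textbf{Step 2: the conjugation.} Set $P:=E\opbw(\lambda_b(x)\xi^2\Id_{\C^2}+2\ii{\bf U}a_x(x)\xi)=\opbw(p)$ (using that $E$ is constant), with $p=p_2+p_1$, $p_2:=\lambda_b\xi^2E\in\Gamma^2$ and $p_1:=2\ii a_x\xi\,E{\bf U}\in\Gamma^1$, and expand
\[
(\Id_{\C^2}+\mathtt M_{-1})\,P\,(\Id_{\C^2}-\mathtt M_{-1})=P+[\mathtt M_{-1},P]-\mathtt M_{-1}P\,\mathtt M_{-1}.
\]
The term $\mathtt M_{-1}P\,\mathtt M_{-1}$ maps $\mathcal H^s$ into $\mathcal H^{s+1}$ (by $\mathtt M_{-1}$), then into $\mathcal H^{s-1}$ (by $P$), then into $\mathcal H^s$ (by $\mathtt M_{-1}$), hence is bounded on $\mathcal H^s$ and is absorbed into $R_0$. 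For the commutator, Theorem \ref{compo} with $\rho=1$ yields $\opbw(M)\opbw(p)=\opbw(Mp)+R'$ and $\opbw(p)\opbw(M)=\opbw(pM)+R''$, where $R',R''\colon\mathcal H^s\to\mathcal H^s$ are bounded by \eqref{resto} (the composition remainder of symbols of orders $-1$ and $2$, expanded to $\rho=1$, loses no derivatives), so
\[
[\mathtt M_{-1},P]=\opbw(Mp-pM)+(\text{bounded on }\mathcal H^s),\qquad Mp-pM=[M,p_2]+[M,p_1].
\]
Here $[M,p_1]\in\Gamma^0$, so $\opbw([M,p_1])$ is bounded on $\mathcal H^s$; and $[M,p_2]=\lambda_b\xi^2[M,E]$ is, by a direct $2\times2$ computation, a matrix proportional to $m(x,\xi)\sm{0}{-1}{1}{0}$. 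By the choice of the coefficient of $m$, for $|\xi|\ge1/2$ (where $\psi\equiv1$) this matrix cancels the off--diagonal entry of $p_1=2\ii a_x\xi\,E{\bf U}$, so that $p_1+[M,p_2]$ equals the diagonal symbol $2\ii a_x\xi\,E$ up to a symbol supported in $\{|\xi|<1/2\}$, which is smoothing and hence gives a bounded operator. Collecting all the bounded pieces into $R_0$,
\[
(\Id_{\C^2}+\mathtt M_{-1})\,P\,(\Id_{\C^2}-\mathtt M_{-1})=\opbw\big(p_2+2\ii a_x\xi\,E\big)+R_0=E\opbw\big(\lambda_b(x)\xi^2+2\ii a_x(x)\xi\big)+R_0,
\]
which is \eqref{diagodiago}.

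\textbf{Step 3: size of $R_0$, and the main obstacle.} Each term entering $R_0$ — namely $-\mathtt M_{-1}P\mathtt M_{-1}$, the remainders $R',R''$, $\opbw([M,p_1])$, and the low--frequency correction — is estimated via Theorems \ref{compo} and \ref{azione} by finitely many seminorms of $m$ and of $p$; these are finite and bounded by a constant depending only on $s$ and on $\|a\|_{H^{s_0+2}}$ (the borderline requirement coming from $m$, which carries one $x$--derivative of $a$ and must be taken in $\Gamma^{-1}_{s_0+1}$). Hence $R_0\in\mathcal L(\mathcal H^s;\mathcal H^s)$ for every $s\ge0$ with the claimed bound. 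The only genuine difficulty is the bookkeeping in Step 2: one must verify that the matrix commutator $[M,p_2]$ is the \emph{unique} leftover of order one, and that everything else — the $\Gamma^0$ pieces hidden in $Mp$ and $pM$ (including the Poisson--bracket corrections one would keep with $\rho=2$), the double conjugation term $\mathtt M_{-1}P\mathtt M_{-1}$, and the cutoff mismatch on $\{|\xi|<1/2\}$ — is of nonpositive order, so as to land in $\mathcal L(\mathcal H^s;\mathcal H^s)$ uniformly for $s\ge0$.
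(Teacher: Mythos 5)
Your argument is correct and is essentially the paper's own proof with the bookkeeping written out: the paper likewise proves \eqref{cliff2} from the explicit formula and Hypothesis \ref{hyp1}, applies the composition Theorem \ref{compo} (with $\rho=2$ rather than your $\rho=1$; both suffice, and your choice is what makes the constant depend only on $\|a\|_{H^{s_0+2}}$), and identifies the order-one off-diagonal leftover as $2\lambda_b(x)\xi^2 m(x,\xi)+2\ii a_x(x)\xi=2\ii a_x(x)\xi(1-\psi(\xi))$, which is compactly supported in $\xi$ and hence absorbed into $R_0$, exactly as in your Step 2. The one caveat, inherited from the paper's own sign conventions rather than introduced by you, is that with $m=-\ii a_x\psi/(\lambda_b\xi)$ as in \eqref{generatore-diago-ord-1} and the ordering $(\Id_{\C^2}+\mathtt{M}_{-1})P(\Id_{\C^2}-\mathtt{M}_{-1})$ the $(1,2)$ entry of $[M,\lambda_b\xi^2E]$ equals $-2\lambda_b\xi^2m=+2\ii a_x\xi\psi$, which adds to, rather than cancels, the entry $2\ii a_x\xi$ of $2\ii a_x\xi E{\bf U}$; the cancellation you assert "by the choice of the coefficient of $m$" actually requires $m=+\ii a_x\psi/(\lambda_b\xi)$ (equivalently, exchanging the roles of $\pm\mathtt{M}_{-1}$), a harmless sign adjustment that changes nothing in the structure of your proof or of the paper's.
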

 
 \begin{proof}
 Recalling that the function $a(x)$ is $C^{\infty}$, $\lambda_b(x)\geq \sqrt{\mathtt{c}_1}$ for all $x$ (see hypothesis \ref{hyp1}) and 
 that ${\rm supp}(\pa_{x}^{k}\psi)\subset\{1/4\leq |\x|\leq 1/2\}$ for any $k\geq 1$,
 it follows easily that $m\in \Gamma^{-1}_p$, $p\geq s_0$,
 with bound as in \eqref{cliff2}.
 Moreover, by the composition Theorem \ref{compo}, used with $\rho=2$, we get
  formula 
 \eqref{diagodiago}  up to a term of the form
 \[
 \opbw\Big(\sm{0}{1}{1}{0}r(x,\x)\Big)\,,\qquad r(x,\x):=2
 \lambda_b(x) \,\xi^{2}m(x,\x)+2\ii a_{x}(x)\x\,.
 \]
 Using \eqref{generatore-diago-ord-1},  we note that
 \[
 r(x,\x)=2\ii a_{x}(x)\x (1-\psi(\x))\,,
 \]
 which is compactly supported in $\xi$ and therefore can be absorbed in the operator $R_0$.
 \end{proof}

\subsubsection{Cancellation of the self-adjoint term in the beam equation.} \label{magia}
In the following we cancel the self-adjoint part of the 
subprincipal symbol for the beam equation. We perform a Gauge-type transformation in the spirit of \cite{Ozawa}.
We consider the function 
\begin{equation}\label{generatore-eliminazione}
k(x)=\exp(2\lambda_b(x)).
\end{equation}
We have the following lemma.
\begin{lemma}\label{eliminazione}
We have that $k\in \Gamma^{0}_s$, $s\geq{s}_0>1/2$,  and 
  satisfies 
 \begin{equation}\label{cliff3}
 |k|_{0,s,\alpha}\lesssim_{s,\alpha}\|a\|_{H^{s+1}}\,,
 \qquad \forall\alpha\in \N\,.
 \end{equation} 
 Moreover
\begin{equation*}
\opbw(k^{-1}(x))\opbw\Big(\lambda_b(x)\,\xi^2+2\ii a_x(x)\xi\Big)\opbw(k(x))
=\lambda_b(x)\xi^2+R_0\,,
\end{equation*}
where $R_0$ belongs to $\mathcal{L}(H^{s};H^{s})$ for any $s\geq0$,
  {with operator norm bounded form above by some $C>0$
 depending on $s$ and $\|a\|_{H^{{s}_0+2}}$.}
\end{lemma}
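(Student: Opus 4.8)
\emph{Approach.} The identity is obtained by reading the left-hand side through the composition Theorem~\ref{compo} and exhibiting the gauge factor $k$ as the one engineered so that the only symbol surviving the two compositions is the top-order one, $\lambda_b(x)\x^2$. For the symbol-class statement, recall that $\lambda_b=\sqrt{1+2a}=\sqrt b$ is $C^\infty$ and bounded away from zero by the ellipticity Hypothesis~\ref{hyp1}; hence $k=\exp(2\lambda_b)$ and $k^{-1}=\exp(-2\lambda_b)$ are smooth, and since they do not depend on $\x$ they are symbols of order $0$, for which the seminorm \eqref{seminorme} collapses to $|k|_{0,s,\alpha}=\|k\|_{H^s}$. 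The bound \eqref{cliff3} then follows from a Moser-type estimate for the composition of $a$ with the fixed smooth map $t\mapsto\exp(2\sqrt{1+2t})$ on the set $\{1+2a\geq\mathtt{c}_1\}$, together with the tame product \eqref{tame}; the same reasoning controls every seminorm of $k^{-1}$.

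\emph{Symbolic expansion.} Put $p(x,\x):=\lambda_b(x)\x^2+2\ii a_x(x)\x\in\Gamma^2_p$ (for all $p$, since $a$ is smooth) and apply Theorem~\ref{compo} with $\rho=2$, first to $\opbw(p)\circ\opbw(k)$ and then to $\opbw(k^{-1})\circ\opbw(p\#_2 k)$. In both steps the orders of the two factors add up to $2$, so the gain $\rho=2$ in \eqref{resto} makes the corresponding remainder bounded from $H^s$ to $H^s$ for every $s\geq 0$, with norm controlled by finitely many seminorms of $p,k,k^{-1}$ and hence by $\|a\|_{H^{s_0+2}}$ (using also \eqref{ac2} to spend only the needed regularity); these remainders go into $R_0$. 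It remains to expand $k^{-1}\#_2(p\#_2 k)$ via the product rule \eqref{cancelletto}: since $k$ and $k^{-1}$ are $\x$-independent, the Poisson brackets reduce to $\{p,k\}=(\partial_\x p)\,k_x$ and $\{k^{-1},q\}=-(\partial_x k^{-1})(\partial_\x q)$, so the expansion is a short explicit computation. Sorting by homogeneity in $\x$, the order-$2$ part is exactly $\lambda_b(x)\x^2$ (unchanged, since $k^{-1}k=1$); the order-$1$ part is a single first-order symbol built from $a_x$, $\lambda_b$ and $\partial_x k/k$; and everything of order $\le 0$ is a symbol in $\Gamma^0_{s_0}$ whose Bony--Weyl quantization is bounded on every $H^s$ by Theorem~\ref{azione}, hence also absorbed into $R_0$.

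\emph{Cancellation and conclusion.} One checks that the choice \eqref{generatore-eliminazione} annihilates the order-$1$ symbol isolated in the previous step — this is the point of the Gauge-type transform in the spirit of \cite{Ozawa}: setting that symbol to zero gives a first-order linear ODE for $\log k$ whose coefficients are explicit functions of $a$ and $\lambda_b$, and \eqref{generatore-eliminazione} is built to solve it. Assembling the three steps leaves precisely $\opbw\bigl(\lambda_b(x)\x^2\bigr)$ modulo an operator $R_0\in\mathcal{L}(H^s;H^s)$, $s\geq0$, with norm depending on $\|a\|_{H^{s_0+2}}$, which is the assertion. The computation is routine once Theorems~\ref{compo} and~\ref{azione} are available; the delicate point is the order-$1$ bookkeeping in the expansion — one must verify that the Poisson-bracket contributions produced by the \emph{two} successive $\#_2$-compositions combine with the $2\ii a_x\x$ term already present in $p$ into a \emph{single} first-order symbol that a scalar gauge factor can remove — and one should check that $\rho=2$, the largest value allowed in Theorem~\ref{compo}, is indeed enough here, which it is since each composition has total order exactly $2$.
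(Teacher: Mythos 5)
Your strategy is the same as the paper's (whose printed proof is just ``apply Theorem~\ref{compo} with $\rho=2$''): expand the two compositions by \eqref{cancelletto}, dump all remainders into $R_0$ via \eqref{resto} and Theorem~\ref{azione}, and get \eqref{cliff3} from a Moser-type composition estimate, since $k$ is $\xi$-independent and $\lambda_b=\sqrt{b}$ is bounded away from zero by Hypothesis~\ref{hyp1}. That part is fine. The problem is that the only substantive step of the lemma --- the order-one cancellation --- is exactly the step you do not perform: you assert ``one checks that the choice \eqref{generatore-eliminazione} annihilates the order-$1$ symbol'' and that \eqref{generatore-eliminazione} ``is built to solve'' the resulting ODE, without doing the check.

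Doing the check shows the assertion fails for the $k$ of \eqref{generatore-eliminazione}. With $p=\lambda_b\xi^2+2\ii a_x\xi$ and $k=k(x)$, the two applications of \eqref{cancelletto} give
\begin{equation*}
k^{-1}\#_2\bigl(p\#_2 k\bigr)
=\lambda_b\,\xi^{2}
+2\ii\Bigl(a_x-\lambda_b\,\tfrac{k_x}{k}\Bigr)\xi
+\bigl(\text{terms of order}\le 0\bigr)\,,
\end{equation*}
because each composition contributes $\tfrac{1}{2\ii}\,2\lambda_b\tfrac{k_x}{k}\xi$ at order one. Hence the gauge must solve $\lambda_b\,\partial_x\log k=a_x$, i.e.\ $\partial_x\log k=a_x/\lambda_b=\partial_x\lambda_b$, i.e.\ $k=C e^{\lambda_b}$. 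The function in \eqref{generatore-eliminazione} is $k=\exp(2\lambda_b)$, for which $\lambda_b\,\partial_x\log k=2a_x$, so the conjugated symbol retains the first-order part $-2\ii a_x\xi$, whose Bony--Weyl quantization maps $H^{s}\to H^{s-1}$ and is not a bounded remainder $R_0$ unless $a_x\equiv0$. In other words, the statement holds only after replacing the exponent $2\lambda_b$ in \eqref{generatore-eliminazione} by $\lambda_b$ (an evident slip in the paper that a complete proof must detect and repair, or else one must track a different first-order coefficient out of Lemmata~\ref{lem:diagobeam}--\ref{lem:diago-beam-uno}). Since your argument rests entirely on this unverified --- and, as stated, false --- cancellation claim, it does not yet establish the lemma; you should exhibit the explicit residual symbol $2\ii\bigl(a_x-\lambda_b\partial_x\log k\bigr)\xi$ and verify that your choice of $k$ makes it vanish.
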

\begin{proof}
One has to apply Theorem \ref{compo} with $\rho=2$.
\end{proof}
Resuming all the lemmas of this section, we  built a \emph{parametrix} which diagonalizes the principal orders of the beam equation. More precisely
we have the following.
Recalling \eqref{matriceregobeam}, we set
 \begin{align}
 \mathtt{D}_{b}&:=\opbw(k^{-1}(x))(\Id_{\C^2}+\mathtt{M}_{-1})\opbw(S_b^{-1})\,,
 \label{parametrix}
 \\
 \widetilde{\mathtt{D}}_{b}&:=\opbw(S_b)(\Id_{\C^2}-\mathtt{M}_{-1})\opbw(k(x))\,,
 \label{inverse2}
 \\
 \mathcal{R}_{-2}&:=\widetilde{\mathtt{D}}_{b}\mathtt{D}_{b}-\Id_{\C^2}\,,
 \label{approx-inv}
 \end{align}
 and we note that, by using Theorem \ref{azione}, the bounds \eqref{cliff}, \eqref{cliff2}, \eqref{cliff3},
 and Theorem \ref{compo} with $\rho=2$,
 one has 
 \begin{equation}\label{totalMappa}
 \| \mathtt{D}_{b}\|_{\mathcal{L}(H^{s};H^s)}\,,\;
  \| \widetilde{\mathtt{D}}_{b}\|_{\mathcal{L}(H^{s};H^s)}\,,
   \|\mathcal{R}_{-2}\|_{\mathcal{L}(H^{s};H^{s+2})}\lesssim_{s}C(\|a\|_{H^{{s}_0+2}})\,,
   \quad {s\geq0} \,\,\,,
 \end{equation}
 for some constant $C(\|a\|_{H^{{s}_0+2}})$ depending on $s$ and $\|a\|_{H^{{s}_0+2}}$.
Moreover $\mathtt{D}_{b}$ 
 essentially diagonalizes the operator $\opbw(A_b(x,\xi))$ in the  sense that 

\begin{equation}\label{total-diago}
\begin{aligned}
&\mathtt{D}_{b}E\opbw(A_b(x,\xi))\widetilde{\mathtt{D}}_{b}=E\opbw\Big(\lambda_b(x)\,\xi^2\Big)+R_{0,b}\\
\end{aligned}
\end{equation}
for some remainder $R_{0,b}$ belonging to $\mathcal{L}(\mathcal{H}^{s};\mathcal{H}^{s})$,
with operator norm bounded 
from above by a constant $C>0$ depending only on $s$ and $\|a\|_{H^{{s}_0+2}}$.

\subsubsection{Diagonalization of the principal order for the wave}
Analogously to what we have done for the beam equation,  
for the wave equation 
we define
\begin{equation}\label{quantit-wave}
\begin{aligned}
&\lambda_w:=\lambda_w(\widetilde{Z},\widetilde{W};x):=\sqrt{1+2a_w}\,, 
\quad 
S_w:=\left(\begin{matrix}s_{{1,w}} &s_{2,w}\\
s_{2,w} &s_{1,w}\end{matrix}\right)\,, 
\quad 
S_w^{-1}=\left(\begin{matrix}s_{{1,w}} &-s_{2,w}\\
-s_{2,w} &s_{1,w}\end{matrix}\right),
\\&
s_{1,w}:=\frac{1+a_w+\sqrt{1+2a_w}}{\sqrt{2\lambda_w(1+a_w+\lambda_w)}}\,,
 \quad s_{2,b}:=\frac{-a_w}{\sqrt{2\lambda_w(1+a_w+\lambda_w)}}\,.
\end{aligned}
\end{equation}
{We point out that $\lambda_w(x)=\sqrt{c(x)+\partial_{\theta_{xx}} F_2}$ is well defined thanks to hypothesis \ref{hyp1}.}
The above quantities have the same meaning 
of the ones in \eqref{quantit-beam} but for the wave equation, 
for this reason we infer that
 \begin{equation}\label{diagowave}
 S_{w}^{-1}E(\Id_{\C^2}+\mathbf{U}a_w)S_w=E\lambda_w(\widetilde{Z},\widetilde{W};x)\,.
 \end{equation}
 \begin{lemma}\label{lem:diagowave}
Recall \eqref{parametrifissati}, \eqref{matriceregowave}.
  One has that, for $1/2<p\leq {s}_1$,
 \begin{equation}\label{cliffwave}
 \|s_{1,w}-1\|_{H^{p}}+\|s_{2,w}\|_{H^{p}}
 +\|\lambda_{w}-1\|_{H^{p}}\lesssim_{p}C_{r}\,.
 \end{equation}
 Moreover
 \begin{align}
  \opbw(S_w^{-1})E\opbw\big(A_w\big)\opbw(S_w)&= 
  E \opbw(\lambda_w(x)|\xi|)+R_{0,w}\,, \label{diago1wave}
 \end{align}
 where
 \[
 \|R_{0,w}\|_{\mathcal{L}(H^{\s}; H^{\s})}\lesssim C_{r}\,, \quad \s\geq0\,.
 \]
 Finally one has that 
 \begin{equation}\label{cliffwave200}
  \|\pa_{t}s_{1,w}\|_{H^{s_0}}+\|\pa_{t}s_{2,w}\|_{H^{s_0}}
 +\|\pa_{t}\lambda_{w}\|_{H^{s_0}}\lesssim C_{R}\,, 
 \end{equation}
 uniformly in $t\in [0,T]$.
  \end{lemma}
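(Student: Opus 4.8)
The plan is to mirror exactly the strategy used for the beam equation in Lemma \ref{lem:diagobeam}, the only genuine difference being that the coefficient $a_w = a_w(\widetilde V;x)$ now depends on $\widetilde V$ (through $\mathtt g_{1,w}$) and hence carries $C_r$-type bounds instead of $O(1)$ bounds, and that we additionally need to control the time derivatives of the entries of $S_w$ and of $\lambda_w$. First I would establish the algebraic identity \eqref{diagowave}: since $E(\Id_{\C^2} + \mathbf{U} a_w)$ has trace $0$ (the off-diagonal-versus-diagonal structure of $E$ and $\mathbf U$ gives eigenvalues $\pm\sqrt{1+2a_w}$, exactly as in the beam case), the matrix $S_w$ of normalized eigenvectors conjugates it to $E\lambda_w$; this is a pointwise-in-$x$ linear algebra computation identical to the one yielding \eqref{diagobeam}.

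Next I would prove the estimates \eqref{cliffwave}. Here I invoke Hypothesis \ref{hyp1} to guarantee $\lambda_w = \sqrt{c(x) + \partial_{\theta_{xx}}F_2}$ is bounded below (using also \eqref{palla-wave} so that $\|\widetilde W\|_{L^\infty H^{s_1}}\le r$ keeps $\mathtt g_{1,w}$ small, which is where the $C_r$ dependence and the restriction $1/2 < p \le s_1$ enter). Then $\lambda_w - 1$, $s_{1,w}-1$, $s_{2,w}$ are all smooth functions of $a_w(\widetilde V;x) = d(x) + \mathtt g_{1,w}(\widetilde V;x)$, which vanish when $a_w \equiv 0$; since $d$ is smooth and $\mathtt g_{1,w}(\widetilde V;\cdot)$ satisfies $\|\mathtt g_{1,w}(\widetilde V;\cdot)\|_{H^p}\lesssim_p \|\widetilde V\|_{H^{p+2}}$ by the tame product estimate \eqref{tame} applied to the quadratic polynomial $\partial_{\theta_{xx}}F_2$ composed with $\widetilde V$ (recalling \eqref{inverse}), the composition estimate for Nemytskii-type operators on $H^p$ ($p > 1/2$, so $H^p$ is an algebra) gives \eqref{cliffwave} with the constant absorbed into $C_r$.

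Then I would derive the operator identity \eqref{diago1wave}: apply the composition Theorem \ref{compo} with $\rho = 2$ to $\opbw(S_w^{-1})\, E\, \opbw(A_w)\, \opbw(S_w)$. The principal symbol of the composition is $S_w^{-1} E (\Id_{\C^2} + \mathbf U a_w) S_w \,|\xi| = E\lambda_w |\xi|$ by \eqref{diagowave}; since $A_w$ has order $1$ and is genuinely order $1$ in $\xi$ (no subprincipal term, unlike $A_b$), the first-order correction $\frac{1}{2\ii}\{\cdot,\cdot\}$ drops one order to $0$ and, together with the remainder $R^c_\rho$ from \eqref{resto}, is bounded on $\mathcal H^\s$ for all $\s\ge 0$; all the seminorms involved are controlled by \eqref{figaro4}, \eqref{cliffwave}, yielding $\|R_{0,w}\|_{\mathcal L(H^\s;H^\s)}\lesssim C_r$. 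Finally, for \eqref{cliffwave200} I would differentiate the explicit formulas \eqref{quantit-wave} in $t$: $\partial_t\lambda_w$, $\partial_t s_{i,w}$ are smooth functions of $a_w$ times $\partial_t a_w = \partial_t\mathtt g_{1,w}(\widetilde V;x)$, and $\partial_t\mathtt g_{1,w}$ is (a polynomial in $\widetilde V$) times $\partial_t\widetilde V$, so the tame estimate \eqref{tame} together with $\|\partial_t\widetilde W\|_{L^\infty H^{s_1}}\le R$ and $\|\widetilde W\|_{L^\infty H^{s_1}}\le r$ from \eqref{palla-wave} gives the bound at regularity $s_0$ (note $s_0 + 2 \le s_1$), uniformly in $t\in[0,T]$, with the constant renamed $C_R$.

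The main obstacle is bookkeeping rather than conceptual: one must check that every seminorm $|A_w(\widetilde V;\cdot)|_{1,p,\alpha}$ and every Sobolev norm of the entries of $S_w^{\pm 1}$ that appears in Theorem \ref{compo}'s remainder estimate \eqref{resto} is taken at a regularity $p \le s_* - 3/2$ covered by \eqref{figaro4}, so that the composition is legitimate and the resulting constants are genuinely of the form $C_r$ (and $C_R$ once a $\partial_t$ lands on a coefficient); the smallness built into \eqref{palla-wave} is what makes $\lambda_w$ stay away from zero and keeps these constants finite.
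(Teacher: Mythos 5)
Your strategy is the paper's own (pointwise diagonalization \eqref{diagowave}, Sobolev bounds for $s_{1,w},s_{2,w},\lambda_w$ from the explicit formulas \eqref{quantit-wave} together with Hypothesis \ref{hyp1} and \eqref{palla-beam}--\eqref{palla-wave}, symbolic calculus for \eqref{diago1wave}, and direct differentiation in $t$ for \eqref{cliffwave200}), but one step does not go through as written: the application of Theorem \ref{compo} with $\rho=2$. That choice is borrowed from the beam case, where it is harmless because $A_b$ depends only on the fixed smooth function $b(x)$; here, instead, $A_w$ and $S_w^{\pm1}$ depend on $\widetilde V$, whose regularity is limited by \eqref{palla-beam}--\eqref{palla-wave}. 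Theorem \ref{compo} with $\rho=2$ requires the symbols in $\Gamma^m_{s_0+2}$ and its remainder estimate \eqref{resto} is paid in seminorms at regularity $s_0+2$; by \eqref{figaro4} (and the analogous composition estimates for the entries of $S_w^{\pm1}$, which are functions of $a_w$) this costs $\|\widetilde V\|_{H^{s_0+2+3/2}}=\|\widetilde V\|_{H^{s_1+2}}$, and in particular $\|\widetilde W\|_{H^{s_1+2}}$, a quantity which is merely finite under $\widetilde V\in L^\infty\mathcal H^{s_*}$, $s_*\geq s_1+2$, but is \emph{not} bounded by $r$, nor even by $R$ (\eqref{palla-wave} controls $\widetilde W$ only in $H^{s_1+1}$). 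Hence the claimed bound $\|R_{0,w}\|_{\mathcal L(H^{\sigma};H^{\sigma})}\lesssim C_r$ does not follow from your argument, and the constants would not have the structure needed later (parametrix bounds, modified energy, and the iteration where $\widetilde V=V_{n-1}$). Note also that \eqref{cliffwave} itself only reaches $p\leq s_1<s_0+2$, so it cannot supply the $S_w$-seminorms your composition requires.

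The fix is exactly what the paper does: since \eqref{diago1wave} claims only a bounded (order-zero) remainder and no sub-principal correction, Theorem \ref{compo} with $\rho=1$ suffices (the composite order is $1$, so the gain $\rho=1$ already lands $R_{0,w}$ in $\mathcal L(H^{\sigma};H^{\sigma})$, and $a\#_1 b=ab$ gives directly the principal symbol $E\lambda_w|\xi|$ via \eqref{diagowave}); this consumes seminorms only at $s_0+1$, i.e.\ norms $\|\widetilde V\|_{H^{s_1+1}}$ covered by \eqref{palla-beam}--\eqref{palla-wave}. Relatedly, the sharp count for the coefficient is $\|\mathtt{g}_{1,w}(\widetilde V;\cdot)\|_{H^{p}}\lesssim\|\widetilde V\|_{H^{p+3/2}}$ (coming from $\theta_{xx}\sim\langle D\rangle^{3/2}\widetilde w$), not $\|\widetilde V\|_{H^{p+2}}$ as you wrote; the weaker count would again push you outside the range controlled by the hypotheses at the endpoint $p=s_1$. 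With these two adjustments, your proofs of \eqref{cliffwave} and \eqref{cliffwave200} (differentiating \eqref{quantit-wave} in $t$ and using $\|\partial_t\widetilde V\|_{L^\infty H^{s_1}}\leq R$) coincide with the paper's.
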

 
 \begin{proof}
 Using \eqref{quantit-wave}-\eqref{simbolowave}-\eqref{simbog} 
 and hypothesis \eqref{palla-beam}-\eqref{palla-wave} 
 to estimate the semi-norms of the symbols $A_w$ 
 and $S_w$ the thesis follows by using Theorem \ref{compo} with $\rho=1$.
 It remains to show \eqref{cliffwave200}.
 Recalling \eqref{quantit-wave}-\eqref{simbolowave}-\eqref{simbog}, 
the definition of semi-norm \eqref{seminorme}  and 
hypothesis \eqref{palla-beam}-\eqref{palla-wave} (see also \eqref{cliffwave}) 
we have for any $t\geq 0$, 
\begin{equation*}
|\partial_t\lambda_w|_{0, s_0, 4}
\lesssim 
\|\partial_t \widetilde{W}\|_{L^{\infty}H^{{s}_1}}+\|\partial_t \widetilde{Z}\|_{L^{\infty}H^{s_1}}
\lesssim R\,.
\end{equation*}
The others are similar.
 \end{proof}

\noindent
Recalling \eqref{quantit-wave}, we set 
\begin{equation}\label{parametrix-wave}
 \mathtt{D}_{w}:=\opbw(S_w^{-1})\,,
 \qquad
   \widetilde{\mathtt{D}}_{w}:=\opbw(S_w)\,,
   \qquad
    \mathcal{R}_{-2}:=\widetilde{\mathtt{D}}_{w}\mathtt{D}_{w}-\Id_{\C^2}\,,
\end{equation}
 and we note that, by using Theorem \ref{azione}, \eqref{cliffwave}, 
 and Theorem \ref{compo} with $\rho=1$,
 one has
 \begin{equation}\label{totalMappaWave}
 \| \mathtt{D}_{w}\|_{\mathcal{L}(H^{s};H^s)}\,,\;
  \| \widetilde{\mathtt{D}}_{w}\|_{\mathcal{L}(H^{s};H^s)}\,,
   \|\mathcal{R}_{-2}\|_{\mathcal{L}(H^{s};H^{s+2})}\lesssim_{s}C_{r}\,,
   \quad s\geq0\,.
 \end{equation}

\subsubsection{Construction of the parametrix.} This subsection 
is devoted to the proof of Proposition \ref{prop:costruzioneMappa}.
\begin{proof}[{\bf Proof of Proposition \ref{prop:costruzioneMappa}.}] 
We set the notation
\begin{equation}\label{matricitt}
\mathtt{T}:= \left(\begin{matrix}0 & \mathtt{T}_{1} \\ \mathtt{T}_2 &0\end{matrix}\right)\,,
\qquad 
\mathtt{D}:=\left(\begin{matrix}\mathtt{D}_{b} & 0 \\ 0 & \mathtt{D}_{w}
 \end{matrix}\right)\,,\qquad 
 \widetilde{\mathtt{D}}:=\left(\begin{matrix}\widetilde{\mathtt{D}}_{b} & 0 \\ 0 & \widetilde{\mathtt{D}}_{w}
 \end{matrix}\right)\,,
\end{equation}
where $\mathtt{D}_b, \widetilde{\mathtt{D}}_b$ are  defined in \eqref{parametrix}, \eqref{inverse2}, 
$\mathtt{D}_w, $ $\widetilde{\mathtt{D}}_w$ are defined in \eqref{parametrix-wave}
and 
$\mathtt{T}_i=\opbw(T_{i}(x,\x))$, $i=1,2$ are paradifferential operators
whose symbols have to be determined.
We define
\begin{equation*}
{\Phi}:= (\Id_{\mathbb{C}^4}+\mathtt{T})\mathtt{D}\,, 
\quad \Psi:=\widetilde{\mathtt{D}}(\Id_{\mathbb{C}^4}+\mathtt{T})\,.
\end{equation*}
By using \eqref{total-diago} and \eqref{diago1wave} we have ${\mathtt{D}}\mathfrak{A}\widetilde{\mathtt{D}}=\Lambda$ (recall \eqref{diagonalissima}) modulo bounded remainders. Thus  we infer that
\begin{align}
\Phi\big( \mathfrak{A}+\mathfrak{B}\big)\Psi&= \Lambda+{\mathtt{D}}\mathfrak{B}\widetilde{\mathtt{D}}+\mathtt{T}\Lambda+\Lambda\mathtt{T}\label{sila1}\\
&+\mathtt{T}{\mathtt{D}}(\mathfrak{A}+\mathfrak{B})\widetilde{\mathtt{D}}\mathtt{T}+\mathtt{T}{\mathtt{D}}\mathfrak{B}\widetilde{\mathtt{D}}+ {\mathtt{D}}\mathfrak{B}\widetilde{\mathtt{D}} \mathtt{T}+R_0,\label{sila2}
\end{align}
for some bounded remainder $R_0$.
All the terms in \eqref{sila2} are bounded remainders, while the terms in \eqref{sila1} have positive order, in particular we need to see a cancellation  in ${\mathtt{D}}\mathfrak{B}\widetilde{\mathtt{D}}+\mathtt{T}\Lambda+\Lambda\mathtt{T}$. In its extended form such an expression takes the form
\begin{equation}\label{sila3}
\begin{aligned}
{\mathtt{D}}\mathfrak{B}\widetilde{\mathtt{D}}+\mathtt{T}\Lambda+\Lambda\mathtt{T}=&
\left(\begin{matrix}0& {\mathtt{D}}_{b}E\opbw(B_b(x,\xi))\widetilde{\mathtt{D}}_w  \\ {\mathtt{D}}_{w}E\opbw(B_w(x,\xi))\widetilde{\mathtt{D}}_b &0
 \end{matrix}\right)
 \\&\qquad \quad+
  \left(\begin{matrix} 0 & E\opbw(\lambda_b(x)\xi^2)\mathtt{T}_1 \\ E \opbw(\lambda_w(x)|\xi|)\mathtt{T}_2&0
 \end{matrix}\right)
 \\&
 \qquad\quad+  \left(\begin{matrix} 0 & \mathtt{T}_1E\opbw(\lambda_w(x)|\xi|) \\ \mathtt{T}_2E \opbw(\lambda_b(x)\xi^2)&0
 \end{matrix}\right)\,.
\end{aligned}
\end{equation}
We look for $\mathtt{T}_1, \mathtt{T}_2$ such that
\begin{equation*}
\begin{aligned}
{\mathtt{D}}_{b}E\opbw(B_b(x,\xi))\widetilde{\mathtt{D}}_w+E\opbw(\lambda_b(x)\xi^2)\circ\mathtt{T}_1=l.o.t
\\
{\mathtt{D}}_{w}E\opbw(B_w(x,\xi))\widetilde{\mathtt{D}}_b+\mathtt{T}_2\circ E\opbw(\lambda_b(x)\xi^2)=l.o.t\,,
\end{aligned}
\end{equation*}
where $``\emph{l.o.t.}''$ denotes paradifferential operators {with order at most zero.}

Now recall \eqref{parametrix-wave}, \eqref{parametrix} and \eqref{inverse2}. Then, if $\mathtt{T}_i=\opbw(T_i(x,\xi))$, we can use symbolic calculus (namely Proposition \ref{compo}) and obtain the equation on the matrices of symbols
\begin{equation*}
\begin{aligned}
k(x)S_b E B_b(x,\xi)S_w^{-1}&=-E\lambda_b(x)\xi^22T_1(x,\xi),\\
k^{-1}(x)S_wE B_w(x,\xi)S_b^{-1}&=-T_2(x,\xi)E\lambda_b(x)\xi^2.
\end{aligned}
\end{equation*}
Therefore it is enough to choose 
\begin{equation}\label{olio10}
T_1(x,\xi)=\frac{\psi(\xi)}{\xi^{2}}\frac{k(x)}{\lambda_b(x)}ES_b E B_b(x,\xi)S_w^{-1}\,,
\end{equation}
 and 
 \begin{equation}\label{olio11}
 T_2(x,\xi)=\frac{\psi(\xi)}{\xi^{2}}\frac{1}{\lambda_b(x)k(x)}S_wE B_w(x,\xi)S_b^{-1}E\,,
 \end{equation}
where $\psi$ is a function cutting-off the zero frequency as in \eqref{generatore-diago-ord-1}. 
By estimates 
\eqref{cliff} on $S_b$ and $\lambda_b$, the bounds
\eqref{cliffwave} on $S_w$ and $\lambda_w$,
 \eqref{figaro3} on $B_b$ and $B_{w}$, one can deduce the bound 
 \eqref{olio3} on $T_i$.
 Hence,
 by Theorems \ref{azione}, \ref{compo} the terms in \eqref{sila2} and \eqref{sila3}
 can be absorbed in the remainder $\mathcal{M}$ satisfying \eqref{olio5}.
The bound \eqref{olio}
follows by \eqref{parametrix-wave}-\eqref{totalMappaWave}
and \eqref{parametrix}-\eqref{totalMappa}.
Similarly one deduces the estimates \eqref{stimephipsi}.
Finally, reasoning as in the proof of \eqref{cliffwave200},
using formul\ae\,
\eqref{olio10}-\eqref{olio11} and hypotheses
\eqref{palla-beam}-\eqref{palla-wave}
one can show that 
\[
|\pa_{t}T_{i}|_{-\frac{3}{2},s_0,\alpha}\lesssim_{s_0, \alpha}C_{R} \qquad \forall \alpha\in\mathbb{N}\,.
\]
This implies together with estimates \eqref{cliffwave200} and Theorem \ref{azione}
the bound \eqref{stimatempoaltempo}.
This concludes the proof.
\end{proof}
\subsection{Equivalence of the norms}
We introduce the operator, for $s\geq0$, 
\[
\mathfrak{L}_{2s}=\mathfrak{L}_{2s}(\widetilde{V}):=\left(\begin{matrix} 
\opbw(\Id_{\C^2}\lambda_{b}^s\,\xi^{2s}) & 0 \\ 0 &
\opbw(\Id_{\C^2}\lambda_{w}^{2s}\,\x^{2s})
\end{matrix}\right)
\]
and we set (recall \eqref{scalarprod4x4})
\begin{equation}\label{regnodinapoli}
|V|^{2}_{\widetilde{V},s}:=\langle \mathfrak{L}_{2s}(\widetilde{V})
\Phi(\widetilde{V})V,\Phi(\widetilde{V})V\rangle, \qquad V:=(Z, W) \,.
\end{equation}

The next proposition is fundamental for the proof of the main result. 
We establish the \emph{almost} equivalence 
between the aforementioned norms and the usual ones.

\begin{proposition}\label{equivalenzatotale}
Recall \eqref{palla-beam}-\eqref{palla-wave}, we have for all $\s\geq 0$ 
\begin{align}
C_r^{-1} (\|V\|_{H^{\s}}^2-\|V\|_{H^{-2}}^2)&\le |V|_{\widetilde{V},\s}^2\le C_r\|V\|^2_{H^{\s}}\,,
\label{normabeamequiv}
\\
C_r^{-1}(\|Z\|^{2}_{H^{\sigma}}-\|Z\|^{2}_{H^{-2}})&\le 
(\opbw(\lambda_b^{\sigma}\xi^{2\sigma})\mathtt{D}_{b}
Z,\mathtt{D}_{b}Z)_{L^2\times L^2},
\label{norminabeam}
\\
C_r^{-1}(\|W\|_{H^{\sigma}}^2-\|W\|_{H^{-1}}^2)&\leq 
(\opbw(\lambda_w^{2\s} \xi^{2\s})\mathtt{D}_wW,\mathtt{D}_wW)_{L^2\times L^2}\,.
\label{norminawave}
\end{align}
\end{proposition}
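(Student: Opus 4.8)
The plan is to derive the three inequalities from two ingredients already at hand: the \emph{approximate} invertibility of the block parametrices $\mathtt{D}_{b},\mathtt{D}_{w}$ (Equations \eqref{approx-inv}--\eqref{totalMappa} and \eqref{parametrix-wave}--\eqref{totalMappaWave}), and a Garding-type inequality (Lemma~\ref{Garding-deltone}) for the positive elliptic symbols $\lambda_{b}^{\sigma}\xi^{2\sigma}$ and $\lambda_{w}^{2\sigma}\xi^{2\sigma}$. First I would prove the two block estimates \eqref{norminabeam}--\eqref{norminawave}; then I would assemble \eqref{normabeamequiv} from them, treating the off-diagonal block $\mathtt{T}$ appearing in $\Phi$ as a smoothing perturbation.

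\emph{Step 1: the block estimates.} Consider \eqref{norminabeam}. By Hypothesis~\ref{hyp1} we have $\lambda_{b}=\sqrt{b}\ge \sqrt{\mathtt{c}_{1}}>0$, so $\lambda_{b}^{\sigma}\xi^{2\sigma}$ is a real, nonnegative symbol of order $2\sigma$, comparable to a constant times $\langle\xi\rangle^{2\sigma}$ for $|\xi|$ large, with seminorms bounded by $C_{r}$ thanks to \eqref{cliff}; it degenerates only near $\xi=0$. Lemma~\ref{Garding-deltone} then gives, for every $\delta>0$, an estimate of the shape $(\opbw(\lambda_{b}^{\sigma}\xi^{2\sigma})u,u)_{L^{2}\times L^{2}}\ge (C_{r}^{-1}-\delta)\|u\|_{H^{\sigma}}^{2}-C_{r,\delta}\|u\|_{H^{-2}}^{2}$, which I would apply with $u=\mathtt{D}_{b}Z$. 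From $\widetilde{\mathtt{D}}_{b}\mathtt{D}_{b}=\Id_{\C^{2}}+\mathcal{R}_{-2}$ with $\mathcal{R}_{-2}\in\mathcal{L}(\mathcal{H}^{\sigma};\mathcal{H}^{\sigma+2})$ bounded by $C_{r}$ one obtains $\|Z\|_{H^{\sigma}}\le C_{r}\|\mathtt{D}_{b}Z\|_{H^{\sigma}}+C_{r}\|Z\|_{H^{\sigma-2}}$, hence a reverse bound $\|\mathtt{D}_{b}Z\|_{H^{\sigma}}^{2}\gtrsim C_{r}^{-1}\|Z\|_{H^{\sigma}}^{2}-C_{r}\|Z\|_{H^{\sigma-2}}^{2}$; moreover $\|\mathtt{D}_{b}Z\|_{H^{-2}}\le C_{r}\|Z\|_{H^{-2}}$ by \eqref{olio}. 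Substituting, taking $\delta$ a small multiple of $C_{r}^{-1}$, and using the interpolation inequality \eqref{intorpolostima} together with Young's inequality (legitimate since $-2\le\sigma-2\le\sigma$ when $\sigma\ge0$) to trade $\|Z\|_{H^{\sigma-2}}^{2}$ for a small fraction of $\|Z\|_{H^{\sigma}}^{2}$ plus a multiple of $\|Z\|_{H^{-2}}^{2}$, the bad terms get absorbed and \eqref{norminabeam} follows. The estimate \eqref{norminawave} is obtained in the same way with $\lambda_{w}=\sqrt{c+\partial_{\theta_{xx}}F_{2}}$, positive by Hypothesis~\ref{hyp1} and the smallness condition \eqref{explicitR}, using the bounds \eqref{cliffwave} and the approximate inverse \eqref{parametrix-wave}--\eqref{totalMappaWave} of $\mathtt{D}_{w}$. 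The only change is that the relevant smoothing remainder, and therefore the negative-order correction, is one order less regular, so that $\|W\|_{H^{-1}}$ rather than $\|W\|_{H^{-2}}$ appears.

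\emph{Step 2: the full estimate \eqref{normabeamequiv}.} The upper bound is immediate: $\mathfrak{L}_{2s}$ is block-diagonal, self-adjoint for $\langle\cdot,\cdot\rangle$, and by \eqref{cliff}, \eqref{cliffwave} and Theorem~\ref{azione} it maps $\mathcal{H}^{s}\times\mathcal{H}^{s}$ into $\mathcal{H}^{-s}\times\mathcal{H}^{-s}$ with norm $\le C_{r}$, so $|V|_{\widetilde{V},s}^{2}=\langle\mathfrak{L}_{2s}\Phi V,\Phi V\rangle\le C_{r}\|\Phi V\|_{H^{s}}^{2}\le C_{r}\|V\|_{H^{s}}^{2}$ by \eqref{stimephipsi}. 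For the lower bound I would write $\Phi V=\mathtt{D}V+\mathtt{D}\mathtt{T}V$ (cf.\ \eqref{expaMappa}, \eqref{matricitt}) and expand, getting the diagonal term $\langle\mathfrak{L}_{2s}\mathtt{D}V,\mathtt{D}V\rangle$ plus a cross term and $\langle\mathfrak{L}_{2s}\mathtt{D}\mathtt{T}V,\mathtt{D}\mathtt{T}V\rangle$. Since $\mathfrak{L}_{2s}$ and $\mathtt{D}$ are block-diagonal, the diagonal term equals $(\opbw(\lambda_{b}^{s}\xi^{2s})\mathtt{D}_{b}Z,\mathtt{D}_{b}Z)_{L^{2}\times L^{2}}+(\opbw(\lambda_{w}^{2s}\xi^{2s})\mathtt{D}_{w}W,\mathtt{D}_{w}W)_{L^{2}\times L^{2}}$, which by Step~1 is $\ge C_{r}^{-1}\|V\|_{H^{s}}^{2}-C_{r}\|V\|_{H^{-1}}^{2}$. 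The other two terms involve $\mathtt{T}$, an operator of order $-3/2$ by \eqref{olio3}; hence $\|\mathtt{D}\mathtt{T}V\|_{H^{s}}\le C_{r}\|V\|_{H^{s-3/2}}$, and by Cauchy--Schwarz and the action of $\mathfrak{L}_{2s}$ they are bounded in modulus by $C_{r}\big(\|V\|_{H^{s}}\|V\|_{H^{s-3/2}}+\|V\|_{H^{s-3/2}}^{2}\big)$, which one more use of \eqref{intorpolostima} turns into a small fraction of $\|V\|_{H^{s}}^{2}$ plus a multiple of $\|V\|_{H^{-2}}^{2}$. Absorbing these, and bounding $\|V\|_{H^{-1}}^{2}$ by interpolation through $\|V\|_{H^{s}}^{2}$ and $\|V\|_{H^{-2}}^{2}$, gives the lower bound in \eqref{normabeamequiv}.

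\emph{Main obstacle.} Everything in Step~2 is routine bookkeeping with symbolic calculus and interpolation. The substantive point is Step~1: the Garding-type inequality of Lemma~\ref{Garding-deltone} combined with the fact that $\mathtt{D}_{b},\mathtt{D}_{w}$ are invertible only modulo smoothing operators. This is precisely what forces the weak-norm corrections $\|\cdot\|_{H^{-2}}$ (beam) and $\|\cdot\|_{H^{-1}}$ (wave) and the $r$-dependent, rather than universal, constants, and one must keep track that both the degeneracy of the symbols at $\xi=0$ and the regularity loss in the wave parametrix cost only finitely many orders, so that these corrections stay controlled by a fixed negative-index Sobolev norm.
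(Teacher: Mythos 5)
There is a genuine gap, and it sits exactly at the step you call the ``substantive point.'' Your Step~1 derives \eqref{norminabeam}--\eqref{norminawave} from a Garding-type inequality that you attribute to Lemma~\ref{Garding-deltone}. But Lemma~\ref{Garding-deltone} does not assert a lower bound of the form $(\opbw(\lambda_b^{\sigma}\xi^{2\sigma})u,u)_{L^2\times L^2}\ge (C_r^{-1}-\delta)\|u\|^2_{H^{\sigma}}-C_{r,\delta}\|u\|^2_{H^{-2}}$ for general $u$: it is a statement about the specific bilinear expression $\langle \mathfrak{L}_{2\s}(\widetilde{V})\Phi(\widetilde{V}){\bf \Delta}V,\Phi(\widetilde{V})V\rangle$. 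Worse, in the paper the logical order is the reverse of yours: the proof of Lemma~\ref{Garding-deltone} invokes precisely \eqref{norminabeam} and \eqref{norminawave} to bound its terms $(III)$ and $(IV)$ from below. So using that lemma to prove the block estimates is circular, and no abstract Garding inequality for paradifferential operators with positive symbols is available elsewhere in the paper (the toolbox is only Theorems~\ref{azione}, \ref{compo}, Lemma~\ref{ParaMax}). The positivity of the quadratic forms with weight $\lambda^{\sigma}\xi^{2\sigma}$ is exactly what has to be \emph{produced} here, not quoted.

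The paper's own route supplies that missing ingredient by a square-root factorization: one introduces the zeroth-order operator $\widetilde{\Lambda}^{\sigma}$ built from $\lambda_b^{\sigma/2}$ and $\lambda_w^{\sigma}$, and shows via Theorem~\ref{compo} (with a small loss $\delta$, using $s_0-\delta>1/2$) that $\widetilde{\Lambda}^{\sigma}\opbw(\langle\xi\rangle^{2\sigma}\Id)\widetilde{\Lambda}^{\sigma}=\mathfrak{L}_{2\sigma}(\widetilde V)+\mathcal{R}_1^{2\sigma-\delta}$ and $\widetilde{\Lambda}^{-\sigma}\Psi\widetilde{\Lambda}^{\sigma}\Phi=\Id_{\C^4}+\mathcal{R}_2^{-\delta}$. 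Then $|V|^2_{\widetilde V,\sigma}$ is, up to controllable errors, the manifestly nonnegative quantity $\|\opbw(\langle\xi\rangle^{\sigma})\widetilde{\Lambda}^{\sigma}\Phi V\|^2_{L^2}$, the parametrix identity converts this back into $\|V\|^2_{H^{\sigma}}$, and the lower-order errors $\|V\|^2_{H^{\sigma-\delta/2}}$ are absorbed by interpolation \eqref{intorpolostima} and Young's inequality, which is where the $\|V\|^2_{H^{-2}}$ correction comes from; the block estimates \eqref{norminabeam}, \eqref{norminawave} are then proved by the same argument with $\Phi\rightsquigarrow \mathtt{D}_b$, $\Psi\rightsquigarrow\widetilde{\mathtt{D}}_b$ (resp.\ $\mathtt{D}_w$, $\widetilde{\mathtt{D}}_w$). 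Your use of the approximate inverses $\widetilde{\mathtt{D}}\mathtt{D}=\Id+\mathcal{R}_{-2}$, your interpolation/absorption bookkeeping, and your Step~2 assembly of \eqref{normabeamequiv} from the block estimates (treating $\mathtt{T}$ as order $-3/2$) are all sound and closely parallel manipulations the paper does elsewhere; what must be replaced is the appeal to Lemma~\ref{Garding-deltone}, substituting an independent positivity argument such as the $\widetilde{\Lambda}^{\sigma}$ factorization above.
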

\begin{proof}
We start by proving \eqref{normabeamequiv}.
The upper bound in \eqref{normabeamequiv} follows by using \eqref{stimephipsi}, indeed 
\begin{equation*}
|V|^2_{\widetilde{V}, \s}\leq \|\mathfrak{L}_{2 \s}(\widetilde{V})\Phi(\widetilde{V})V\|_{H^{-\sigma}}\|\Phi(\widetilde{V})V\|_{H^{\sigma}}\lesssim_{\s, r}\|V\|_{H^{\sigma}}^2.
\end{equation*}
We now focus on the lower bound in \eqref{normabeamequiv}. Let $\delta>0$ such that $s_0-\delta>1/2$ . 
For any $\sigma\in\R$ set
\begin{equation*}\widetilde{\Lambda}^{\sigma}:=\left(\begin{matrix} 
\opbw(\Id_{{\C}^2}\lambda_{b}^{\sigma/2}) & 0 \\ 0 &
\opbw(\Id_{\C^2}\lambda_{w}^{\sigma})
\end{matrix}\right).
\end{equation*}
By using Theorem \ref{compo} with ${s}_0-\delta$ 
instead of $s_0$ and $\rho=\delta$, and recalling 
\eqref{cliff} and \eqref{cliffwave},
 we obtain 
\begin{equation}\label{mannaggia1}
\begin{aligned}
\widetilde{\Lambda}^{\sigma}\opbw(\xi^{2\sigma}\Id_{\C^4})\widetilde{\Lambda}^{\sigma}=\mathfrak{L}_{2\sigma}(\widetilde{V})+\mathcal{R}_1^{2\sigma-\delta}
\end{aligned}\end{equation}
where 
\begin{equation}\label{salcazzo}
\|\mathcal{R}_1^{2\sigma-\delta}f\|_{H^{\bar{\sigma}-2\sigma+\delta}}\le 
C_r \|f\|_{H^{\bar{\sigma}}} \quad \forall\bar{\sigma}\geq 0
\end{equation}
and any function $f$ in $H^{\bar\sigma}$.  Analogously we prove that
\begin{equation}\label{mannaggia2}
\widetilde{\Lambda}^{-\sigma}\Psi(\widetilde{V})\widetilde{\Lambda}^{\sigma}\Phi(\widetilde{V})=\Id_{\C^4}+\mathcal{R}_2^{-\delta}\,,
\qquad
\|\mathcal{R}_2^{-\delta}\|_{\mathcal{L}(H^{\bar{\s}-\delta};H^{\bar{\s}})}\le C_{r}\,.
\end{equation}

Owing to \eqref{mannaggia1} and \eqref{mannaggia2} we infer that
\begin{equation*}
\|V\|_{H^{\sigma}}^2\stackrel{\eqref{mannaggia2}}{\le}\|\widetilde{\Lambda}^{-\sigma}\Psi\widetilde{\Lambda}^{\sigma}\Phi V\|_{H^{\sigma}}\stackrel{(*)}{\le}C_r(\|\widetilde{\Lambda}^{\sigma}\Phi(\widetilde{V}) V\|_{H^{\sigma}}^2+\|V\|^2_{H^{\sigma-\delta}}),
\end{equation*}
where in $(*)$ we have used the boundedness of the operator $\widetilde{\Lambda}^{-\sigma}\Psi$ given by Proposition \ref{prop:costruzioneMappa}. 
We can continue the chain of inequalities and obtain (recall \eqref{regnodinapoli})
\begin{equation*}
\begin{aligned}
\|\widetilde{\Lambda}^{\sigma}\Phi(\widetilde{V}) V\|_{H^{\sigma}}^2&=\langle \opbw(\langle\xi\rangle^{\sigma})\widetilde{\Lambda}^{\sigma}\Phi(\widetilde{V})V, \opbw(\langle\xi\rangle^{\sigma})\widetilde{\Lambda}^{\sigma}\Phi(\widetilde{V}) {V}\rangle\\
&=\langle  \widetilde{\Lambda}^{\sigma}\opbw(\langle\xi\rangle^{2\sigma})\widetilde{\Lambda}^{\sigma}\Phi(\widetilde{V})V,\Phi(\widetilde{V})V
\rangle\\
&\stackrel{\eqref{mannaggia1}}{\leq} C_r (|V |_{\widetilde{V},\sigma}^2+\langle\mathcal{R}_1^{2\sigma-\delta}\Phi(\widetilde{V})V,\Phi(\widetilde{V})V\rangle)\\
&\leq C_r (|V|_{\widetilde{V},\sigma}^2+\|V\|^2_{H^{\sigma-\delta/2}}).
\end{aligned}
\end{equation*}
We explain the penultimate  inequality,  we shall use the estimates {\eqref{salcazzo}} on  
$\mathcal{R}_1^{2\sigma-\delta}$ {with $\bar\sigma=\sigma-\delta/2$} and 
the estimate  \eqref{stimephipsi} to obtain 
\begin{equation*}
\begin{aligned}
\langle\mathcal{R}_1^{2\sigma-\delta}\Phi(\widetilde{V})V,\Phi(\widetilde{V})V\rangle_{L^2}
&\leq 
\|\mathcal{R}_1^{2\sigma-\delta}\Phi(\widetilde{V})V\|_{H^{\delta/2-\sigma}} 
\|\Phi(\widetilde{V})V\|_{H^{\sigma-\delta/2}}
\\&\leq 
C_r\|\Phi(\widetilde{V})V\|_{H^{\sigma-\delta/2}}^2
\leq C_r\|V\|_{H^{\sigma-\delta/2}}^2\,.
\end{aligned}
\end{equation*}

Resuming we proved the inequality
\begin{equation*}
\|V\|_{H^{\sigma}}^2\leq C_r(|V|_{\widetilde{V},\sigma}^2+\|V\|_{H^{\sigma-\delta/2}}^2)\,.
\end{equation*}
In order to control $\|V\|_{H^{\sigma-\delta/2}}^2$ we use the interpolation inequality 
\eqref{intorpolostima}
with $s_1\rightsquigarrow-2$, $s_2\rightsquigarrow\s$, 
$s\rightsquigarrow\s-\delta/2$
and  Young inequality\footnote{$ab\leq a^p/p+b^q/q$ for any $a,b\geq0$ and $p,q>1$ 
such that $1/p+1/q=1$}
we obtain
\begin{equation*}
\begin{aligned}
\|V\|_{H^{\sigma-\delta/2}}^2&\leq 
(\|V\|^2_{H^{-2}})^{\frac{\delta}{2(\sigma+2)}}
(\|V\|_{H^{\sigma}}^2)^{\frac{2(\sigma+2)-\delta}{2(\sigma+2)}}
\\&
\leq 
\tfrac{\delta}{2(\sigma+2)}\|V\|_{H^{-2}}^2\tau^{-\frac{2(\sigma+2)}{\delta}}
+\tfrac{2(\sigma+2)-\delta}{2(\sigma+2)}
\tau^{\frac{2(\sigma+2)-\delta}{2(\sigma+2)}}
\|V\|_{H^{\sigma}}^2\,.
\end{aligned}
\end{equation*}
We then conclude by choosing $\tau$ small enough.

In order to prove \eqref{norminabeam} (resp. \eqref{norminawave})
one can follow 	almost word by word the reasoning above by substituting
$\Phi\rightsquigarrow \mathtt{D}_{b}$ and  $\Psi\rightsquigarrow \widetilde{\mathtt{D}}_{b}$ 
(resp. $\Phi\rightsquigarrow \mathtt{D}_{w}$ and  $\Psi\rightsquigarrow \widetilde{\mathtt{D}}_{w}$ ).
\end{proof}
\subsection{Garding-type inequality}
Let us define
\begin{equation}\label{deltone}
{\bf \Delta}:=\left(\begin{matrix}\Id_{\C^2}\pa_{x}^{4} & 0 \\ 0& -\Id_{\C^2}\pa_{x}^{2}\end{matrix}\right)
\end{equation}
We prove the following.
\begin{lemma}\label{Garding-deltone}
Let $\sigma\geq 0$.
The following inequalities hold true
\begin{equation}\label{supergarding}
\begin{aligned}
\langle
\mathfrak{L}_{2\s}(\widetilde{V})\Phi(\widetilde{V}){\bf \Delta}V,\Phi(\widetilde{V})V\rangle
&\geq 
(\|Z\|_{H^{\s+2}}^2+\|W\|_{H^{\s+1}}^{2}) - \,C_r \|V\|^2_{H^{\s}}\,,
\\
\langle\mathfrak{L}_{2\s}(\widetilde{V})\Phi(\widetilde{V})V,\Phi(\widetilde{V}){\bf \Delta}V\rangle
&\geq 
(\|Z\|_{H^{\s+2}}^2+\|W\|_{H^{\s+1}}^{2})- \,C_r \|V\|^2_{H^{\s}}\,\,,
\end{aligned}
\end{equation}
for some $C_r>1$ and for any $V=(Z, W)\in \mathcal{H}^{\s+2}\times\mathcal{H}^{\s+1}$.
\end{lemma}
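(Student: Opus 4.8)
The plan is to reduce the Gårding-type estimate \eqref{supergarding} to a positivity statement at the level of paradifferential symbols, exploiting that the parametrix $\Phi(\widetilde V)$ together with the weight $\mathfrak L_{2\s}(\widetilde V)$ has been constructed precisely so that conjugation of the elliptic operator $\bf\Delta$ becomes diagonal and positive modulo lower order terms. First I would observe that, by the block structure of $\Phi$ in \eqref{expaMappa}--\eqref{matricitt} and of $\bf\Delta$ in \eqref{deltone}, the quadratic form $\langle \mathfrak L_{2\s}\Phi\,{\bf\Delta}V,\Phi V\rangle$ splits, modulo terms bounded by $C_r\|V\|_{H^\s}^2$, into a beam contribution involving $\mathtt D_b$, $\pa_x^4=\opbw(\x^4)$ and the weight $\opbw(\lambda_b^\s\x^{2\s})$, plus a wave contribution involving $\mathtt D_w$, $-\pa_x^2=\opbw(\x^2)$ and $\opbw(\lambda_w^{2\s}\x^{2\s})$; the off-diagonal operators $\mathtt T_i$ are of order $-3/2$ by \eqref{olio3}, so their cross contributions are lower order and absorbed into $C_r\|V\|_{H^\s}^2$ after using Theorem \ref{azione} and the interpolation estimate \eqref{intorpolostima}.

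Next, on the beam block I would use the composition Theorem \ref{compo} with $\rho=2$ (and the bounds \eqref{cliff}, \eqref{cliff3} on $S_b$, $k$, $\lambda_b$ together with $\opbw(S_b^{-1})\opbw(S_b)=\Id+\text{smoothing}$, $\opbw(k^{-1})\opbw(k)=\Id+\text{smoothing}$) to move the $\mathtt D_b$'s through $\opbw(\x^4)$, obtaining
\[
\mathtt D_b^{*}\opbw(\lambda_b^\s\x^{2\s})\mathtt D_b\,\opbw(\x^4)=\opbw\big(\lambda_b^{\s+2}\x^{2\s+4}\big)+\text{l.o.t.}
\]
up to operators bounded on $H^\s$ with norm $\le C_r$. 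The symbol $\lambda_b^{\s+2}\x^{2\s+4}$ is real, nonnegative and elliptic of order $2\s+4$ thanks to Hypothesis \ref{hyp1} ($\lambda_b=\sqrt{b(x)}\ge\sqrt{\mathtt c_1}>0$), so the classical Gårding inequality for Bony–Weyl operators (applied to the self-adjoint operator $\opbw(\lambda_b^{\s+2}\x^{2\s+4})$, or more precisely to $\langle D\rangle^{-\s-2}\opbw(\cdots)\langle D\rangle^{-\s-2}$ which is a positive operator of order $0$ modulo smoothing) gives a lower bound $\ge c_r\|Z\|_{H^{\s+2}}^2-C_r\|Z\|_{H^{\s+3/2}}^2$, and the loss term is reabsorbed via \eqref{intorpolostima} and Young's inequality into $\tfrac12\|Z\|_{H^{\s+2}}^2+C_r\|Z\|_{H^\s}^2$. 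After renaming constants (and, if one wants the clean constant $1$ in front of $\|Z\|_{H^{\s+2}}^2$ as written, rescaling the weight $\mathfrak L_{2\s}$ by the harmless constant $c_r^{-1}$, which does not affect the equivalence \eqref{normabeamequiv}) this yields the beam half of the first inequality in \eqref{supergarding}.

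The wave block is handled identically with $\rho=1$ in Theorem \ref{compo}, the bounds \eqref{cliffwave} on $S_w$, $\lambda_w$, and $\mathtt D_w=\opbw(S_w^{-1})$, $\widetilde{\mathtt D}_w=\opbw(S_w)$: one gets $\mathtt D_w^{*}\opbw(\lambda_w^{2\s}\x^{2\s})\mathtt D_w\,\opbw(\x^2)=\opbw(\lambda_w^{2\s+2}\x^{2\s+2})+\text{l.o.t.}$, the symbol $\lambda_w^{2\s+2}\x^{2\s+2}$ is real, nonnegative and elliptic of order $2\s+2$ by Hypothesis \ref{hyp1} ($\lambda_w=\sqrt{c(x)+\pa_{\theta_{xx}}F_2}$, positive on the ball of radius $R$ by \eqref{explicitR}), and Gårding plus interpolation gives $\ge \|W\|_{H^{\s+1}}^2-C_r\|V\|_{H^\s}^2$. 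Summing the two blocks and collecting the discarded off-diagonal and remainder terms into the single loss $C_r\|V\|_{H^\s}^2$ proves the first line of \eqref{supergarding}; the second line follows verbatim by taking adjoints, since all the conjugated operators appearing are self-adjoint modulo smoothing and $\langle \mathfrak L_{2\s}\Phi V,\Phi\,{\bf\Delta}V\rangle=\overline{\langle \mathfrak L_{2\s}\Phi\,{\bf\Delta}V,\Phi V\rangle}$ up to such lower order corrections. The main obstacle I expect is bookkeeping the order count at the ``subprincipal'' level precisely enough: one must check that every commutator generated when sliding $\mathtt D_b$, $\mathtt D_w$, the gauge $k$, and the weights $\mathfrak L_{2\s}$ past $\opbw(\x^4)$ and $\opbw(\x^2)$ really drops two (resp. one) orders, so that after conjugation the error is genuinely of order $2\s+2$ (resp. $2\s$) and hence controlled by the interpolation-plus-Young trick; the off-diagonal symbols $T_i\in\Gamma^{-3/2}$ help here because they never reach top order.
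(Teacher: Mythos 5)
Your decomposition is the same as the paper's: write $\Phi=\mathtt{D}+\mathtt{D}\mathtt{T}$ via \eqref{expaMappa}, isolate the two diagonal blocks, and treat the contributions containing the order $-3/2$ operators $\mathtt{T}_i$ as errors. One caveat there: those cross terms are bounded by $C_r(\|Z\|^2_{H^{\s+7/4}}+\|W\|^2_{H^{\s+3/4}})$, not by $C_r\|V\|^2_{H^\s}$, so they cannot simply be ``absorbed into $C_r\|V\|^2_{H^\s}$''; they must be split, by \eqref{intorpolostima} and Young, between the positive terms $\|Z\|^2_{H^{\s+2}},\|W\|^2_{H^{\s+1}}$ and $C_r\|V\|^2_{H^\s}$. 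You do invoke interpolation, and this is exactly how the paper concludes, so this is a presentational slip rather than a gap.

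Where you genuinely diverge is the positivity step, and there are two issues. The paper never uses (nor states) a G\r{a}rding inequality for Bony--Weyl operators: it writes $\opbw(\x^4)=\opbw(\x^2)\opbw(\x^2)$, moves one self-adjoint factor $\opbw(\x^2)$ to the other slot of the pairing (the commutator being an operator of order $2\s+3$, handled again by interpolation), and then applies the already-proved lower bounds \eqref{norminabeam}, \eqref{norminawave} of Proposition \ref{equivalenzatotale} with $Z\rightsquigarrow\opbw(\x^2)Z$ (and analogously for $W$); positivity thus comes from the approximate invertibility of the parametrix and Theorem \ref{compo} alone. Your route --- conjugate ${\bf \Delta}$ and invoke a ``classical G\r{a}rding inequality'' for the resulting elliptic operator --- imports a tool the paper has not established (a G\r{a}rding inequality for matrix-valued Bony--Weyl operators whose symbols have only Sobolev regularity in $x$), so as written it is a black box needing a precise statement and proof or citation. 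Moreover your identity $\mathtt D_b^{*}\opbw(\lambda_b^\s\x^{2\s})\mathtt D_b\,\opbw(\x^4)=\opbw\big(\lambda_b^{\s+2}\x^{2\s+4}\big)+\mathrm{l.o.t.}$ is not correct: the principal matrix symbol is $(S_b^{-1})^{T}k^{-2}\lambda_b^{\s}S_b^{-1}\,\x^{2\s+4}$, since $S_b$ in \eqref{quantit-beam} is not orthogonal and the gauge $k=\exp(2\lambda_b)$ of \eqref{generatore-eliminazione} does not cancel against $\lambda_b$. That symbol is still symmetric, positive definite and elliptic by Hypothesis \ref{hyp1} and \eqref{cliff}, \eqref{cliff3}, so your argument survives, but it yields a constant $c_r\in(0,1)$ in front of $\|Z\|^2_{H^{\s+2}}+\|W\|^2_{H^{\s+1}}$ rather than the constant $1$ of \eqref{supergarding}; this is harmless for the only use of the lemma (the sign of \eqref{usogarding} in Proposition \ref{stima-energia-totale}), but it should be stated. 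In short: same skeleton, different engine for positivity --- the paper's engine (reusing \eqref{norminabeam}--\eqref{norminawave}) is self-contained, while yours requires correcting the conjugated symbol and quoting a paradifferential G\r{a}rding theorem, or else falling back on the norm-equivalence argument.
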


\begin{proof}
We only prove the first inequality in \eqref{supergarding} being the second one similar.
Recall the expression  \eqref{expaMappa} of $\Phi=\mathtt{D}+\mathtt{D}\mathtt{T}$ 
with  the matrices of operators $\mathtt{D}$ and $\mathtt{T}$  
in \eqref{matricitt} respectively of order $0$ 
(see \eqref{totalMappa}, \eqref{totalMappaWave}) and $-3/2$.
Then one has 
\begin{align}
\langle \mathfrak{L}_{2\s}&(\widetilde{V})\Phi(\widetilde{V}){\bf \Delta}V, \Phi(\widetilde{V})V\rangle
= \langle \mathfrak{L}_{2\s}(\widetilde{V})\mathtt{D}{\bf \Delta}V, \mathtt{D}V\rangle\label{gardingalta}
\\&
+\langle \mathfrak{L}_{2\s}(\widetilde{V}) \mathtt{D}\mathtt{T}{\bf\Delta}V,\mathtt{D}V\rangle
+\langle \mathfrak{L}_{2\s}(\widetilde{V}) \mathtt{D}{\bf\Delta}V,\mathtt{D}\mathtt{T}V\rangle
+\langle \mathfrak{L}_{2\s}(\widetilde{V}) \mathtt{D}\mathtt{T}{\bf\Delta}V,\mathtt{D}
\mathtt{T}V\rangle\label{gardingbassa}\,.
\end{align}
We claim that \eqref{gardingbassa} is bounded from above by 
$C_r(\|Z\|_{H^{\sigma+7/4}}^2+\|W\|_{H^{\sigma+3/4}}^2)$. 
Consider for instance the first addendum, it equals to
\begin{equation}\label{bullo}
\begin{aligned}
\langle \mathfrak{L}_{2\s}(\widetilde{V}) \mathtt{D}\mathtt{T}{\bf\Delta}V,\mathtt{D}V\rangle=
&
-\underbrace{(\opbw(\lambda_b^{\sigma}\xi^{2\sigma})
\mathtt{D}_b\mathtt{T}_1\partial_x^2W,\mathtt{D}_bZ)_{L^2\times L^2}}_{(I)}
\\&
+\underbrace{(\opbw(\lambda_w^{2\s}\xi^{2\s}) 
\mathtt{D}_w\mathtt{T}_2\partial_x^4Z,\mathtt{D}_wW)_{L^2\times L^2}}_{(II)}\,.
\end{aligned}
\end{equation}
We can write
\begin{equation*}
(I)=(\opbw(\langle\xi\rangle^{-\sigma-\frac32})\opbw(\lambda_b^{\sigma}\xi^{2\sigma})\mathtt{D}_b\mathtt{T}_1\partial_x^2W,\opbw(\langle\xi\rangle^{\sigma+\frac32})\mathtt{D}_bZ)_{L^2\times L^2}.
\end{equation*}
By using the Cauchy-Schwarz inequality, the action Theorem \ref{azione} and the bounds \eqref{olio3}, \eqref{olio} we may bound the above quantity by
$C_r (\|Z\|_{H^{\sigma+3/2}}^2+\|W\|_{H^{\s-1}}^2)$. Reasoning in the same way, we have
\begin{equation*}
\begin{aligned}
(II)=(\opbw(\langle\xi\rangle^{-\s-\frac34})\opbw(\lambda_w^{2\s}\xi^{2\s}) \mathtt{D}_w\mathtt{T}_2\partial_x^4Z,\opbw(\langle\xi\rangle^{\s+\frac34})\mathtt{D}_wW)_{L^2\times L^2}\leq\\
C_r ( \|Z\|^2_{H^{\s+7/4}}+\|W\|_{H^{\s+3/4}}^2).
\end{aligned}\end{equation*}
Therefore we proved the claimed estimate on \eqref{bullo}.
The other two addends in \eqref{gardingbassa} may be handled in a similar way, actually the third one has better estimates thanks to the presence of $\mathtt{T}$ on both sides of the scalar product. \\
Consider  the r.h.s. of \eqref{gardingalta}. We have
\begin{equation*}\begin{aligned}
\langle \mathfrak{L}_{2\s}(\widetilde{V})\mathtt{D}{\bf \Delta}V, \mathtt{D}V\rangle=&\underbrace{(\opbw(\lambda_b^{\s}\xi^{2\s})\mathtt{D}_b\opbw(\xi^4)Z,\mathtt{D}_bZ)_{L^2\times L^2}}_{(III)}\\
+&\underbrace{(\opbw(\lambda_b^{2\s}\xi^{2\s})\mathtt{D}_w\opbw(\xi^2)W,\mathtt{D}_wW)_{L^2\times L^2}}_{(IV)}.
\end{aligned}\end{equation*}
By using symbolic calculus, i.e. Theorem \ref{compo}, 
and the self-adjointness of $\opbw({\xi^2})$ we have 
\begin{align*}
(III)
=\underbrace{(\opbw(\lambda_b^{\sigma}\xi^{2\sigma})\mathtt{D}_{b}
\opbw({\xi^2})Z,\mathtt{D}_{b}\opbw({\xi^2})Z) _{L^2\times L^2}}_{(V)}
+( R_{2\sigma+3}Z,\mathtt{D}_{b}Z)_{L^2\times L^2},
\end{align*}
where $R_{2\sigma+3}$ is a bounded linear operator from 
$\mathcal{H}^{s+2\sigma+3}$ to 
$\mathcal{H}^{s}$ for $s\in\R$. 
By \eqref{norminabeam}, we have
\begin{equation*}
(V)
\geq 
C_r(\|Z\|^{2}_{H^{\sigma+2}}-\|Z\|^{2}_{L^2})\,.
\end{equation*}
We now estimate from above 
$( R_{2\sigma+3}Z,\mathtt{D}_{b}Z)_{L^2\times L^2}$. 
We have
\begin{equation*}
\begin{aligned}
( R_{2\sigma+3}Z,\mathtt{D}_{b}Z)_{L^2\times L^2}&=
(\opbw(\langle\xi\rangle^{-\sigma-3/2})\mathcal{R}_{2\sigma+3}Z,
\opbw(\langle\xi\rangle^{\sigma+3/2})\mathtt{D}_{b}Z )_{L^2\times L^2}
\\&
\leq C_r \|Z\|^2_{H^{\sigma+3/2}}\,.
\end{aligned}
\end{equation*}
Up to now we proved that
\begin{equation*}
(III)
\geq 
C_r(\|Z\|^2_{H^{\sigma+2}}-\|Z\|_{H^{\sigma+3/2}}^2)\,.
\end{equation*}
Analogously, using \eqref{norminawave}, one proves that 
\begin{align*}
(IV)\geq C_r(\|W\|_{H^{\sigma+1}}^2-\|W\|_{H^{\sigma+1/2}}^2)
\end{align*}
Putting together all these estimates on \eqref{gardingbassa} and on the r.h.s. of \eqref{gardingalta}, we infer that
\begin{equation*}
\begin{aligned}
\langle \mathfrak{L}_{2\s}&(\widetilde{V})\Phi(\widetilde{V}){\bf \Delta}V, \Phi(\widetilde{V})V\rangle
\\
&\geq C_{r}(\|Z\|_{H^{\s+2}}^2+\|W\|_{H^{\s+1}}^2-\|Z\|_{H^{\s+3/2}}^2-\|W\|_{H^{\s+1/2}}^2-\|Z\|_{H^{\s+7/4}}^2-\|W\|_{H^{\s+3/4}}^2)
\\
&\geq {C}_{r}(\|Z\|_{H^{\s+2}}^2+\|W\|_{H^{\s+1}}^2-2 \|Z\|_{H^{\s+7/4}}^2-2 \|W\|_{H^{\s+3/4}}^2).
\end{aligned}
\end{equation*}
We use the interpolation inequalities 
\[
\|Z\|_{H^{\sigma+7/4}}\leq
\eta_1\|Z\|_{H^{\sigma+2}}+\frac{1}{\eta_1}\|Z\|_{H^{\sigma}}\,,
\qquad 
\|W\|_{H^{\sigma+3/4}}\leq \eta_2\|W\|_{H^{\sigma+1}}+\frac{1}{\eta_2}\|W\|_{H^{\sigma}}\,,
\] 
 by choosing $\eta_1=\eta_2=\frac{C_r-1}{2 C_r}$  we obtain the \eqref{supergarding}.
 \end{proof}

\section{Linear well posedness}\label{sec:linear}

We shall study the well posedness of the problem
\eqref{linprob}, with $\widetilde{V}$ satisfying \eqref{palla-beam}-\eqref{palla-wave}.
We need to consider the associated 
regularized, homogeneous  equation.  More precisely,
following  
\cite{iandolikdv, KPV2}, 
 we consider the following parabolic regularization  
\begin{align}
\partial_tV^{\epsilon}&=\big(\mathfrak{A}(\widetilde{V})+
\mathfrak{B}(\widetilde{V})\big)V^{\epsilon}-\epsilon{\bf \Delta}V^{\epsilon}\,,
\qquad 
V^{\epsilon}:=\vect{Z^{\epsilon}}{W^{\epsilon}}\,.
\label{beam-smoo}
\end{align}
The following holds.
\begin{lemma}\label{lem:lwp1reg}
Assume \eqref{palla-beam}-\eqref{palla-wave} and let $\sigma\geq 0$. For any $\epsilon>0$ there exists a time 
$T_{\epsilon}>0$ such that the following holds true. 
For any initial datum ${V_0:=}(Z_0,W_0):= (Z(0),W(0))\in\mathcal{H}^{\sigma}\times \mathcal{H}^{\sigma}$, 
there exists a unique solution $V^{\epsilon}(t)$ of the equation \eqref{beam-smoo} 
that belongs to the space 
$C^0([0,T_{\epsilon}); \cH^{\sigma}\times\cH^{\s})\cap C^1([0,T_{\epsilon});\cH^{\s-2}\times\cH^{\s-1})$. 
\end{lemma}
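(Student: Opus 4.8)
The plan is to construct $V^\epsilon$ by a fixed point argument based on the Duhamel formula, exploiting the parabolic smoothing produced by $-\epsilon{\bf \Delta}$; this is the soft ``regularization'' step in the spirit of \cite{iandolikdv, KPV2}, so there is no genuine obstacle, the price being that $T_\epsilon$ is allowed to depend on $\epsilon$. Let $S_\epsilon(t)$ be the propagator of $\partial_t V=-\epsilon{\bf \Delta}V$: it is the diagonal Fourier multiplier acting as $e^{-\epsilon t\,\xi^{4}}$ on the beam components $Z$ and as $e^{-\epsilon t\,\xi^{2}}$ on the wave components $W$. Since these symbols are real, even in $\xi$ and bounded by $1$, $S_\epsilon(t)$ is a strongly continuous semigroup on $\cH^\sigma\times\cH^\sigma$ preserving the reality constraint in \eqref{RealSobolev}, and for every $\beta\ge 0$, $t>0$ it obeys the smoothing bounds
\[
\|S_\epsilon(t)Z\|_{H^{\sigma+\beta}}\lesssim_{\beta}\big(1+(\epsilon t)^{-\beta/4}\big)\|Z\|_{H^{\sigma}},\qquad \|S_\epsilon(t)W\|_{H^{\sigma+\beta}}\lesssim_{\beta}\big(1+(\epsilon t)^{-\beta/2}\big)\|W\|_{H^{\sigma}}.
\]
One then seeks a solution of \eqref{beam-smoo} as a fixed point, on $X_{T}:=C^{0}([0,T];\cH^{\sigma}\times\cH^{\sigma})$, of
\[
\mathcal{T}[V](t):=S_\epsilon(t)V_0+\int_0^t S_\epsilon(t-\tau)\,\big(\mathfrak A(\widetilde V(\tau))+\mathfrak B(\widetilde V(\tau))\big)V(\tau)\,d\tau .
\]

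The key point is that $\mathfrak A(\widetilde V)+\mathfrak B(\widetilde V)$ is subordinate to the parabolic scales. By Proposition \ref{paraparaProp}$(iii)$--$(iv)$ together with \eqref{palla-beam}--\eqref{palla-wave}, for a.e.\ $\tau\in[0,T]$ the operator $\mathfrak A(\widetilde V(\tau))+\mathfrak B(\widetilde V(\tau))$ maps $\cH^\sigma\times\cH^\sigma$ into $\cH^{\sigma-2}\times\cH^{\sigma-1}$ with operator norm bounded by $C_r$, uniformly in $\tau$: the loss is at most $2$ spatial derivatives on $Z$ (against the fourth order smoothing) and at most $1$ on $W$ (against the second order smoothing), while the off-diagonal coupling in $\mathfrak B$ loses only $1/2$. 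Hence, in every entry, the composition $S_\epsilon(t-\tau)\circ(\mathfrak A+\mathfrak B)$ obeys a bound of the form $C_r\big(1+(\epsilon(t-\tau))^{-\theta}\big)$ with $\theta\le 1/2<1$, integrable in $\tau$ on $[0,t]$. Integrating,
\[
\|\mathcal{T}[V]-\mathcal{T}[V']\|_{X_{T}}\le C_r\big(T+\epsilon^{-1/2}\sqrt{T}\big)\,\|V-V'\|_{X_{T}},
\]
and the same computation shows that $\mathcal{T}$ maps the ball of radius $2\|V_0\|_{H^{\sigma}}$ in $X_T$ into itself. Choosing $T_\epsilon=T_\epsilon(\epsilon,r,\sigma)>0$ so small that $C_r\big(T_\epsilon+\epsilon^{-1/2}\sqrt{T_\epsilon}\big)<\tfrac12$ — a condition independent of the datum — the contraction mapping principle yields a unique $V^\epsilon\in X_{T_\epsilon}$ solving the integral equation, and a standard connectedness argument propagates uniqueness to the whole interval $[0,T_\epsilon)$.

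It remains to identify this mild solution with a classical one and to read off the time regularity. A routine parabolic bootstrap applied to the Duhamel formula (using again the smoothing bounds above and the action/tame estimates for $\mathfrak A,\mathfrak B$) shows that $V^\epsilon(t)\in\cH^{\sigma+\beta}\times\cH^{\sigma+\beta}$ for every $\beta>0$ and every $t\in(0,T_\epsilon)$; in particular the right-hand side of \eqref{beam-smoo} is then classically defined, $V^\epsilon$ solves \eqref{beam-smoo} in the strong sense, and $\partial_tV^\epsilon\in C^0((0,T_\epsilon);\cH^{\sigma-2}\times\cH^{\sigma-1})$. Continuity of $\partial_tV^\epsilon$ up to $t=0$ with values in $\cH^{\sigma-2}\times\cH^{\sigma-1}$ is inherited from the integral equation, since $V^\epsilon\in C^0([0,T_\epsilon);\cH^\sigma\times\cH^\sigma)$ and $\mathfrak A(\widetilde V(t))+\mathfrak B(\widetilde V(t))\in\mathcal L(\cH^\sigma\times\cH^\sigma;\cH^{\sigma-2}\times\cH^{\sigma-1})$ with norm $\le C_r$.

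The only mildly delicate bookkeeping is keeping track of the two different parabolic scalings — fourth order for the beam, second order for the wave — so that each Duhamel kernel stays integrable near $\tau=t$; once this is granted everything is standard. An equivalent route is to observe that $-\epsilon{\bf \Delta}$ is sectorial on $\cH^\sigma\times\cH^\sigma$, that $\mathfrak A(\widetilde V(t))+\mathfrak B(\widetilde V(t))$ is a $t$-measurable perturbation which is bounded from the fractional domain $D((-\epsilon{\bf \Delta})^{1/2})$ into $\cH^\sigma\times\cH^\sigma$, and to invoke the theory of analytic evolution families.
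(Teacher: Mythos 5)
Your proposal is correct and follows essentially the same route as the paper: a contraction mapping argument on the Duhamel formulation in $C^0([0,T_\epsilon];\mathcal{H}^\sigma\times\mathcal{H}^\sigma)$, using the $\epsilon$-dependent parabolic smoothing of $e^{-\epsilon t{\bf \Delta}}$ to absorb the derivative losses of $\mathfrak{A}(\widetilde V)+\mathfrak{B}(\widetilde V)$ (two derivatives against the fourth-order smoothing on the beam component, one against the second-order smoothing on the wave component), with $T_\epsilon$ chosen small depending on $\epsilon$. Your additional remarks on the bootstrap to a strong solution and the $C^1$ regularity in time fill in details the paper leaves implicit, but do not change the argument.
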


\begin{proof}
Let $V:=(Z, W) $ and consider the map 
\[
\begin{aligned}
\mathfrak{F}: & V \longmapsto \mathfrak{F} V:= 
e^{-\epsilon t{\bf \Delta}}V_0+\int_0^t F^{t,t'}(\widetilde{V})V(t')dt'\,,
\\
F^{t,t'}(\widetilde{V})&:=e^{-\epsilon(t-t'){\bf \Delta}}
\big(\mathfrak{A}(\widetilde{V})+
\mathfrak{B}(\widetilde{V})\big)
\end{aligned}
\] 
where $e^{-\epsilon t{\bf \Delta}}$ is the operator defined as (recall \eqref{deltone})
\[
e^{-\epsilon t{\bf \Delta}}=\left(
\begin{matrix}
\Id_{\C^2}e^{-\epsilon t\pa_{x}^{4}} & 0 \\ 0 & \Id_{\C^2}e^{\epsilon t\pa_{x}^{2}}
\end{matrix}
\right)\,,
\qquad 
e^{-\partial^4_x} e^{i\xi x} = e^{-\xi^4} e^{i\xi x},\quad e^{\partial^2_x} e^{i\xi x} = e^{-\xi^2} e^{i\xi x}\,,\quad \forall\, \x\in\Z\,.
\]
With this notation, and recalling \eqref{matricitotali}, we shall write 
\[
F^{t,t'}(\widetilde{V})V(t')=\left(
\begin{matrix}
\Id_{\C^2}e^{-\epsilon (t-t')\pa_{x}^{4}} G_1(\widetilde{V}(t'))V(t')
\\
\Id_{\C^2}e^{\epsilon (t-t')t\pa_{x}^{2}} G_2(\widetilde{V}(t'))V(t')
\end{matrix}
\right)
\]
where 
\[
\begin{aligned}
G_1(\widetilde{V})V&=
-\ii E\opbw(A_b(x,\x))Z
-\ii E\opbw(B_b( \widetilde{V}; x,\x))W,
\\
G_2(\widetilde{V})V&=
-\ii E\opbw(A_w(\widetilde{V}; x,\x))W
-\ii E\opbw(B_w(\widetilde{V};x,\x))Z\,.
\end{aligned}
\]
First of all, by action Theorem \ref{azione} and the estimates on the seminorms of the symbols 
\eqref{opside2}, \eqref{figaro3}, \eqref{figaro4}, we deduce 
\begin{equation}\label{latte4}
\begin{aligned}
\|G_1(\widetilde{V})V\|_{L^{\infty}H^{\s-2}}
&\lesssim_{s}\|V\|_{L^{\infty}H^\s}\,,
\\
\|G_2(\widetilde{V})V\|_{L^{\infty}H^{\s-1}}
&\lesssim_{s}\|V\|_{L^{\infty}H^\s}\,.
\end{aligned}
\end{equation}
Moreover we claim that the following bounds hold true:
\begin{equation}\label{accalde}
\begin{aligned}
\|e^{{-\epsilon}t\partial_x^4}f\|_{H^{\sigma}}\,, \|e^{{\epsilon}t\partial_x^2}f\|_{H^{\sigma}} & \leq \|f\|_{H^{\sigma}}
\\
\left\|\int_0^t e^{-{\epsilon(t-t')}\partial_x^4}f(t',\cdot)dt' \right\|_{H^{\sigma}} & 
\lesssim t^{\frac12}\epsilon^{-\frac12}\|f\|_{L^{\infty}H^{\sigma-2}}\,,
\\
\left\|\int_0^t e^{{\epsilon(t-t')}\partial_x^2}f(t',\cdot)dt' \right\|_{H^{\sigma}} & 
\lesssim t^{\frac34}\epsilon^{-\frac14}\|f\|_{L^{\infty}H^{\sigma-1}}\,,
\end{aligned}
\end{equation}
for any $f\in L^{\infty}([0,T_{\epsilon}); \cH^{\sigma})$. 
We prove the second estimate in \eqref{accalde}, the first one being straightforward and the third very similar to the second.
 We have, using Minkowski inequality, the following chain of inequalities 
\begin{align*}
\|\int_0^t e^{-\epsilon(t-t')\partial_x^4}&f(t',\cdot)dt'\|_{H^{\sigma}}\leq\int_0^t 
\|e^{-\epsilon(t-t')\partial_x^4}f(t',\cdot)\|_{H^{\sigma}}dt'
\\
&\leq \int_0^t\Big(\sum_{\xi}e^{-2\epsilon(t-t')\xi^4}\xi^{2\sigma}|\hat{f}(t',\xi)|^{2}\Big)^{1/2}dt'\\
&\leq C\|f\|_{L^{\infty}H^{\sigma-2}}\epsilon^{-1/2}\int_0^t(t-t')^{-1/2}dt'\leq C\|f\|_{L^{\infty}H^{\sigma-2}}\epsilon^{-1/2}t^{1/2},
\end{align*}
where we have also used  the fact that the function 
$ye^{-y}$ is bounded for $y\geq0$.

\noindent
By means of \eqref{accalde} one proves that
that the map $\mathfrak{F}$ defines  a contraction, on a suitable ball 
of the Banach space $C^0([0,T_{\epsilon}); \mathcal{H}^{\sigma}\times \mathcal{H}^{\sigma})$
provided that $T_{\epsilon}\epsilon^{-1}$ is small enough.
Hence the thesis follows.
\end{proof}

To remove the $\epsilon$-dependence on the time 
$T_{\epsilon}$ and therefore pass to the limit for $\epsilon\rightarrow 0$, we need to prove
 \emph{a priori} estimates on the equation \eqref{beam-smoo} {uniformly in the parameter $\epsilon$}.
 To do this we shall use the \emph{modified energy}
 defined in \eqref{regnodinapoli}.
 \begin{proposition}\label{stima-energia-totale}
Assume \eqref{palla-beam}-\eqref{palla-wave}, let $\sigma\geq0$ and consider \eqref{beam-smoo} 
with initial condition $V(0)=V_0\in\cH^{\sigma}\times\cH^{\s}$. {Then the solution $V^{\epsilon}=V^{\epsilon}(t)$, provided by Lemma \ref{lem:lwp1reg}, must satisfy the following estimate}
\begin{equation}\label{energy-beam}
\|V^{\epsilon}(t)\|_{H^{\sigma}}^2\leq C_r\|V_0\|_{H^{\sigma}}^2
+C_R\int_0^t\|V^{\epsilon}(\tau)\|_{H^{\sigma}}^2\,,
\end{equation}
for some $C_{r}$ and $C_R$ depending on \eqref{palla-beam}-\eqref{palla-wave} and $\sigma$,
which in turn implies
\begin{equation}\label{buona-energy-beam}
\|V^{\epsilon}(t)\|_{H^{\sigma}} \leq C_r\exp(C_Rt)\|V_0\|_{H^{\sigma}}\, \qquad \forall \epsilon\geq 0.
\end{equation}
\end{proposition}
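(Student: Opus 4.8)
The plan is to differentiate the modified energy $|V^\epsilon(t)|_{\tV,\sigma}^2$ defined in \eqref{regnodinapoli} along the flow of \eqref{beam-smoo}, exploit the (almost) equivalence of norms from Proposition \ref{equivalenzatotale}, the Garding-type inequality Lemma \ref{Garding-deltone}, and the diagonalization Proposition \ref{prop:costruzioneMappa} to control all the resulting terms, and finally apply Gronwall's inequality. First I would compute
\[
\frac{d}{dt}|V^{\epsilon}|^{2}_{\tV,\sigma}
=\langle (\partial_t\mathfrak{L}_{2\sigma})\Phi V^{\epsilon},\Phi V^{\epsilon}\rangle
+2\,\Re\langle \mathfrak{L}_{2\sigma}(\partial_t\Phi)V^{\epsilon},\Phi V^{\epsilon}\rangle
+2\,\Re\langle \mathfrak{L}_{2\sigma}\Phi\,\partial_tV^{\epsilon},\Phi V^{\epsilon}\rangle\,,
\]
using that $\mathfrak{L}_{2\sigma}$ is (essentially) self-adjoint. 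The first two terms are bounded by $C_R\|V^\epsilon\|_{H^\sigma}^2$: for the first one, $\partial_t\mathfrak{L}_{2\sigma}$ is a paradifferential operator of order $2\sigma$ whose symbol seminorms are controlled by $\|\partial_t\lambda_w\|_{H^{s_0}}\lesssim C_R$ via \eqref{cliffwave200}, and one splits $\langle\xi\rangle^{\sigma}$ onto each side; for the second, one uses \eqref{stimatempoaltempo} together with \eqref{stimephipsi}.

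The main term is $2\Re\langle \mathfrak{L}_{2\sigma}\Phi\,\partial_tV^{\epsilon},\Phi V^{\epsilon}\rangle$, into which I substitute \eqref{beam-smoo}:
\[
\partial_tV^{\epsilon}=(\mathfrak{A}(\tV)+\mathfrak{B}(\tV))V^{\epsilon}-\epsilon{\bf\Delta}V^{\epsilon}\,.
\]
For the parabolic term, Lemma \ref{Garding-deltone} gives
\[
-2\epsilon\,\Re\langle \mathfrak{L}_{2\sigma}\Phi{\bf\Delta}V^{\epsilon},\Phi V^{\epsilon}\rangle
\le -2\epsilon\big(\|Z^\epsilon\|_{H^{\sigma+2}}^2+\|W^\epsilon\|_{H^{\sigma+1}}^2\big)+2\epsilon\,C_r\|V^\epsilon\|_{H^\sigma}^2\,,
\]
so the leading negative contribution can be dropped (it only helps) while the error $2\epsilon C_r\|V^\epsilon\|_{H^\sigma}^2$ is harmless once we take, say, $\epsilon\le 1$, being absorbed into $C_R\|V^\epsilon\|_{H^\sigma}^2$; this is exactly why the parabolic regularization was chosen compatibly with the sign of ${\bf\Delta}$. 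For the hyperbolic part I use \eqref{fessemamt}: writing $\Phi(\mathfrak{A}+\mathfrak{B})=(\Lambda+\mathcal{M})\Psi^{-1}$ is awkward since $\Psi$ is only an approximate inverse, so instead I write $\Phi(\mathfrak{A}+\mathfrak{B})V^{\epsilon}=(\Lambda+\mathcal{M})\Phi V^{\epsilon}+\Phi(\mathfrak{A}+\mathfrak{B})(\Id-\Psi\Phi)V^{\epsilon}$ — wait, more simply I conjugate: set $U^\epsilon:=\Phi V^\epsilon$ and note $\Phi(\mathfrak{A}+\mathfrak{B})V^\epsilon=\Phi(\mathfrak{A}+\mathfrak{B})\Psi\Phi V^\epsilon+\Phi(\mathfrak{A}+\mathfrak{B})(\Id-\Psi\Phi)V^\epsilon$; by \eqref{fessemamt} the first piece is $(\Lambda+\mathcal{M})U^\epsilon$, and by \eqref{stimephipsi} the operator $\Id-\Psi\Phi$ gains two derivatives so, combined with the order $\le 2$ of $\mathfrak{A}+\mathfrak{B}$ and the boundedness \eqref{stimephipsi} of $\Phi$, the second piece maps $H^\sigma\to H^\sigma$ with norm $\le C_R$.

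It then remains to bound $2\Re\langle\mathfrak{L}_{2\sigma}(\Lambda+\mathcal{M})U^\epsilon,U^\epsilon\rangle$. The term with $\mathcal{M}$ is handled by \eqref{olio5} and \eqref{stimephipsi}, giving $\lesssim\|V^\epsilon\|_{H^\sigma}^2\|\tV\|_{H^{s_1}}\le C_R\|V^\epsilon\|_{H^\sigma}^2$. The crucial cancellation is in $2\Re\langle\mathfrak{L}_{2\sigma}\Lambda U^\epsilon,U^\epsilon\rangle$: because $\Lambda$ in \eqref{diagonalissima} has the block structure $-\ii E\,\opbw(\lambda_b\xi^2)$, $-\ii E\,\opbw(\lambda_w|\xi|)$ with $\lambda_b,\lambda_w$ real, and $\mathfrak{L}_{2\sigma}$ is a (real, scalar on each $\C^2$-block) Fourier-type multiplier commuting with $\opbw(\lambda_*^{\cdot}\xi^{\cdot})$ up to lower order via Theorem \ref{compo}, the operator $\mathfrak{L}_{2\sigma}\Lambda$ is, modulo a bounded (order $\le 2\sigma$, seminorm-$C_r$) remainder, \emph{anti-self-adjoint} with respect to $\langle\cdot,\cdot\rangle$ — the factor $-\ii E$ and self-adjointness of $\opbw$ of a real symbol do this. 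Hence $2\Re\langle\mathfrak{L}_{2\sigma}\Lambda U^\epsilon,U^\epsilon\rangle=\langle [\,\mathfrak{L}_{2\sigma}\Lambda+(\mathfrak{L}_{2\sigma}\Lambda)^{*}\,]U^\epsilon,U^\epsilon\rangle$ is the pairing with an order-$2\sigma$ operator of $C_r$-controlled symbol seminorms, hence $\le C_r\|U^\epsilon\|_{H^\sigma}^2\le C_r\|V^\epsilon\|_{H^\sigma}^2$ by \eqref{stimephipsi}. (Here one must also use that $\partial_x\lambda_w$ etc. contribute seminorms bounded by $C_r$, not $C_R$, which follows from \eqref{olio2}; time derivatives, which would give $C_R$, already appeared in the $\partial_t\Phi$, $\partial_t\mathfrak{L}_{2\sigma}$ terms.) Collecting everything,
\[
\frac{d}{dt}|V^{\epsilon}(t)|^{2}_{\tV,\sigma}\le C_R\,\|V^{\epsilon}(t)\|_{H^\sigma}^2\le C_R\,|V^{\epsilon}(t)|^{2}_{\tV,\sigma}+C_R\|V^\epsilon(t)\|_{H^{-2}}^2\,,
\]
where the last step uses the lower bound in \eqref{normabeamequiv}; the $H^{-2}$ term is itself $\le|V^\epsilon|^2_{\tV,\sigma}$ (or controlled by the same by interpolation), so $\frac{d}{dt}|V^\epsilon|^2_{\tV,\sigma}\le C_R|V^\epsilon|^2_{\tV,\sigma}$. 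Integrating in time and using both inequalities in \eqref{normabeamequiv} converts this into \eqref{energy-beam}, and Gronwall's lemma yields \eqref{buona-energy-beam}, uniformly in $\epsilon\in[0,1]$ (the case $\epsilon=0$ being included since the parabolic term only improved the estimate). The main obstacle is the careful bookkeeping in the previous paragraph: ensuring that every commutator/remainder generated by Theorem \ref{compo} when moving $\mathfrak{L}_{2\sigma}$ past $\Lambda$ and past $\Phi$ is genuinely of order $\le 2\sigma$ with seminorms bounded by $C_r$ (not $C_R$), and that the order-$2\sigma$ "diagonal" part of $\mathfrak{L}_{2\sigma}\Lambda$ really is anti-self-adjoint so that no uncontrolled positive-order term survives in the real part — this is where the specific choice of the modified norm \eqref{regnodinapoli}, built from the diagonalizing parametrix $\Phi$ and the symbols $\lambda_b,\lambda_w$, is essential.
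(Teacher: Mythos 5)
Your proof follows the paper's argument essentially step by step up to the very end: differentiation of the modified energy \eqref{regnodinapoli}, the $C_R$-bounds on the $\partial_t\mathfrak{L}_{2\sigma}$ and $\partial_t\Phi$ contributions, the Garding inequality (Lemma \ref{Garding-deltone}) to discard the $\epsilon{\bf\Delta}$ term, insertion of $\Psi\Phi$ up to the $2$-smoothing error and the conjugation identity \eqref{fessemamt}, and the cancellation $\mathfrak{L}_{2\sigma}\Lambda+(\mathfrak{L}_{2\sigma}\Lambda)^{*}=[\mathfrak{L}_{2\sigma},\Lambda]$, which is of order $\leq 2\sigma$ because the Poisson bracket of a symbol with a power of itself vanishes. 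All of this is correct and coincides with the paper's proof; the only inaccuracy in that part is cosmetic: for the wave block the commutator remainder is controlled by $C_R$, not $C_r$ (the symbolic calculus with gain $\rho\geq 1$ consumes one more derivative of $\lambda_w$, hence $\|\widetilde{W}\|_{H^{s_1+1}}\leq R$ enters), which is harmless since the final constant is $C_R$ anyway.

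The genuine gap is in your closing Gronwall step. You assert that $\|V^{\epsilon}\|_{H^{-2}}^2\leq |V^{\epsilon}|^2_{\widetilde{V},\sigma}$ ``or controlled by the same by interpolation''. Neither is justified: \eqref{normabeamequiv} only gives the one-sided bound $C_r^{-1}\big(\|V\|^2_{H^{\sigma}}-\|V\|^2_{H^{-2}}\big)\leq |V|^2_{\widetilde{V},\sigma}$, and the modified energy by itself is not coercive in any norm of $V$: the map $\Phi$ is only a parametrix ($\Psi\Phi-\Id$ is merely $2$-smoothing, with norm $C_R$), and the symbol of $\mathfrak{L}_{2\sigma}$ degenerates at $\xi=0$, so $|V|_{\widetilde{V},\sigma}$ could a priori be small while $\|V\|_{H^{-2}}$ is not. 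Interpolation cannot rescue this either, since $H^{-2}$ is the weak norm and the only control of it by $\|V\|_{H^{\sigma}}$ is circular. Hence the inequality $\frac{d}{dt}|V^{\epsilon}|^2_{\widetilde{V},\sigma}\leq C_R|V^{\epsilon}|^2_{\widetilde{V},\sigma}$ does not follow, and Gronwall applied to the modified energy alone is not available. The fix (the one used in the paper) is to propagate the low norm separately: a direct computation, using that the $\epsilon{\bf\Delta}$ term has a favorable sign and $\sigma\geq 0$, gives $\frac{d}{dt}\|V^{\epsilon}\|^2_{H^{-2}}\lesssim \|V^{\epsilon}\|^2_{H^{\sigma}}$, hence $\|V^{\epsilon}(t)\|^2_{H^{-2}}\leq \|V_0\|^2_{H^{-2}}+C\int_0^t\|V^{\epsilon}(\tau)\|^2_{H^{\sigma}}d\tau$; combining this with the integrated inequality $|V^{\epsilon}(t)|^2_{\widetilde{V},\sigma}\leq C_r\|V_0\|^2_{H^{\sigma}}+C_R\int_0^t\|V^{\epsilon}(\tau)\|^2_{H^{\sigma}}d\tau$ and with both sides of \eqref{normabeamequiv} yields \eqref{energy-beam} in integral form, and only then Gronwall (applied to $\|V^{\epsilon}\|^2_{H^{\sigma}}$) gives \eqref{buona-energy-beam}. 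With this additional step, and the harmless restriction $\epsilon\leq 1$ you already impose, your argument is complete and coincides with the paper's.
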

\begin{remark}\label{rem:unifeps}
Since the estimate \eqref{buona-energy-beam} is independent of $\epsilon$ we may repeat the proof of Lemma \ref{lem:lwp1reg} on the intervals $[T_{\epsilon},T_{2\epsilon}]$,  $[T_{2\epsilon},T_{3\epsilon}]$... and eventually obtain a common time of existence $T>0$ which is independent of $\epsilon$.
\end{remark}
\begin{proof}

We take the time derivative of the modified energy \eqref{regnodinapoli} along the solutions of the system \eqref{beam-smoo}. We have
\begin{align}
\tfrac{d}{dt}|V^{\epsilon}|_{\widetilde{V},\s}^2=&\langle \tfrac{d}{dt} \big(\mathfrak{L}_{2\s}(\widetilde{V})\big)\Phi(\widetilde{V})V^{\epsilon},\Phi(\widetilde{V})V^{\epsilon}\rangle\label{dtL}
\\&
+\langle \mathfrak{L}_{2\s}(\widetilde{V})\tfrac{d}{dt} \big(\Phi(\widetilde{V})\big)V^{\epsilon},\Phi(\widetilde{V})V^{\epsilon}\rangle+\langle \mathfrak{L}_{2\s}(\widetilde{V})\Phi(\widetilde{V})V^{\epsilon},\tfrac{d}{dt} \big(\Phi(\widetilde{V})\big)V^{\epsilon}\rangle\label{dtphi}
\\&
+\langle \mathfrak{L}_{2\s}(\widetilde{V})\Phi(\widetilde{V})
\tfrac{d}{dt}V^{\epsilon}, \Phi(\widetilde{V})V^{\epsilon}\rangle
+\langle \mathfrak{L}_{2\s}(\widetilde{V})\Phi(\widetilde{V})V^{\epsilon}, 
\Phi(\widetilde{V})\tfrac{d}{dt}V^{\epsilon}\rangle\label{cancelli}\,.
\end{align}
By using the action Theorem \ref{azione}, the estimate on $\pa_{t}\Phi$ in 
\eqref{stimatempoaltempo}, the estimate on $\pa_{t}\lambda_w$ in 
\eqref{cliffwave200} to bound $\pa_{t}\mathfrak{L}_{2\s}(\widetilde{V})$,
one may bound terms 
in \eqref{dtL} and \eqref{dtphi} from above 
by $C_R\|V^{\epsilon}\|_{H^{\sigma}}^2$.

 In \eqref{cancelli} we have to see a cancellation. By using the equation \eqref{beam-smoo} we may rewrite \eqref{cancelli} as 
\begin{align}
&\langle\mathfrak{L}_{2\s}(\widetilde{V})\Phi(\widetilde{V})[\mathfrak{A}(\widetilde{V})+\mathfrak{B}(\widetilde{V})]V^{\epsilon}, \Phi(\widetilde{V})V^{\epsilon}\rangle+\langle\mathfrak{L}_{2\s}(\widetilde{V})\Phi(\widetilde{V})V^{\epsilon}, \Phi(\widetilde{V})[\mathfrak{A}(\widetilde{V})+\mathfrak{B}(\widetilde{V})]V^{\epsilon}\rangle\label{cancelli2}\\ 
&-\epsilon \langle\mathfrak{L}_{2\s}(\widetilde{V})\Phi(\widetilde{V}){\bf \Delta}V^{\epsilon}, \Phi(\widetilde{V})V^{\epsilon}\rangle
-\epsilon \langle\mathfrak{L}_{2\s}(\widetilde{V})\Phi(\widetilde{V})V^{\epsilon}, \Phi(\widetilde{V}){\bf\Delta}V^{\epsilon}\rangle\label{usogarding}.
\end{align}
The expression \eqref{usogarding} is bounded from above by $C_r\|V^{\epsilon}\|^2_{H^{\s}}$ thanks to \eqref{supergarding}. Concerning the expression \eqref{cancelli2}, we use that $\Psi\circ \Phi-\Id_{\mathbb{C}^4}$
belongs to
 $\mathcal{L}\big(\mathcal{H}^{\s}\times\mathcal{H}^{\s};\mathcal{H}^{\s+2}\times\mathcal{H}^{\s+2}\big)$ 
 thanks to Proposition \ref{prop:costruzioneMappa}  (see \eqref{stimephipsi})
 and we rewrite it (modulo terms bounded by $C_R\|V^{\epsilon}\|^2_{H^{\s}}$) as
 \begin{equation*}
 \begin{aligned}
 &\langle\mathfrak{L}_{2\s}(\widetilde{V})\Phi(\widetilde{V})[\mathfrak{A}(\widetilde{V})+\mathfrak{B}(\widetilde{V})]\Psi(\widetilde{V})\Phi(\widetilde{V})V^{\epsilon}, \Phi(\widetilde{V})V^{\epsilon}\rangle+\\
& \langle\mathfrak{L}_{2\s}(\widetilde{V})\Phi(\widetilde{V})V^{\epsilon}, \Phi(\widetilde{V})[\mathfrak{A}(\widetilde{V})+\mathfrak{B}(\widetilde{V})]\Psi(\widetilde{V})\Phi(\widetilde{V})V^{\epsilon}\rangle.
 \end{aligned}
 \end{equation*}
 
 
 At this point we are in position to use \eqref{fessemamt} and obtain
  \begin{align*}
 \langle\mathfrak{L}_{2\s}(\widetilde{V})\Lambda\Phi(\widetilde{V})V^{\epsilon}, \Phi(\widetilde{V})V^{\epsilon}\rangle+
 \langle\mathfrak{L}_{2\s}(\widetilde{V})\Phi(\widetilde{V})V^{\epsilon},\Lambda\Phi(\widetilde{V})V^{\epsilon}\rangle,
 \end{align*}
 modulo remainders bounded by $C_R\|V^{\epsilon}\|^2_{H^{\s}}$, and where $\Lambda$ is defined in \eqref{diagonalissima}. Using the skew-selfadjoint character of $\Lambda$, the above expression is equal to 
 \begin{align*}
 \langle\Big[\mathfrak{L}_{2\s}(\widetilde{V}),\Lambda\Big]\Phi(\widetilde{V})V^{\epsilon}, \Phi(\widetilde{V})V^{\epsilon}\rangle.
 \end{align*}
 Because of the choice of the operator $\mathfrak{L}_{2\s}$ we have that the commutator $\Big[\mathfrak{L}_{2\s}(\widetilde{V}),\Lambda\Big]$ equals to zero, modulo linear operators in $\mathcal{L}(\cH^{2\sigma}\times \cH^{2\sigma}, \cH^0 \times \cH^0)$ whose operator norm is bounded by $C_R$, indeed we have
 \begin{equation*}
 \Big[\mathfrak{L}_{2\s}(\widetilde{V}),\Lambda\Big]=\Big[\Id_{\C^{2}}\opbw(\lambda_b^{\s}\xi^{2\s}),E\opbw(\lambda_b\xi^{2})\Big]
 +\Big[\Id_{\C^{2}}\opbw(\lambda_w^{2\s}\xi^{2\s}), E\opbw(\lambda_w|\xi|)\Big]\,,
 \end{equation*}
from which one uses Theorem \ref{compo} and the fact that the Poisson bracket of a symbol and a power of it equals to zero.

\noindent
Up to now we proved that 
\begin{equation*}
\frac{d}{dt}|V^{\epsilon}|_{\widetilde{V},\s}^2\leq C_R\|V^{\epsilon}\|^2_{H^{\s}}.
\end{equation*}
Integrating in time we obtain
\begin{equation}\label{quasi-energia}
|V^{\epsilon}(t)|_{\widetilde{V},\s}^2\leq |V^{\epsilon}(0) |^2_{\widetilde{V},\s}
+C_R\int_0^t \|V^{\epsilon}(\tau)\|_{H^{\sigma}}^2d\tau \stackrel{\eqref{normabeamequiv}}{\leq} C_r \|V^{\epsilon}(0)\|^2_{H^{\sigma}}+C_R\int_0^t \|V^{\epsilon}(\tau)\|_{H^{\sigma}}^2d\tau\,.
\end{equation}
On the other hand, recalling \eqref{normabeamequiv}, we have 
\begin{equation}
|V^{\epsilon}|_{\widetilde{V},\sigma}^2\geq C^{-1}_r\|V^{\epsilon}\|^{2}_{H^{\sigma}}
-C^{-1}_r\|V^{\epsilon}\|^{2}_{H^{-2}}.
\end{equation} 
We may bound the $H^{-2}$ norm 
by just loosing derivatives as follows:
\begin{align*}
\frac{d}{dt}\|V^{\epsilon}\|_{H^{-2}}^2\leq 2\langle \opbw(\langle\x\rangle^{-4})\partial_t V^{\epsilon}, V^{\epsilon}\rangle\leq 2\|V^{\epsilon}\|_{H^0}^2\leq 2\|V^{\epsilon}\|_{H^{\sigma}}^2,
\end{align*}
since $\sigma\geq 0$. Integrating the above inequality we get 
\begin{equation}\label{fine-energia}
\|V^{\epsilon}\|_{H^{-2}}^2\leq \|V^{\epsilon}(0)\|_{H^{-2}}^2
+2\int_0^t\|V^{\epsilon}(\tau)\|_{H^{\sigma}}^2d\tau \qquad \mathrm{for}\,\,\sigma\geq 0\,.
\end{equation}
Putting together \eqref{quasi-energia} and \eqref{fine-energia} 
we eventually deduce \eqref{energy-beam}. 
The \eqref{buona-energy-beam} is just an application of Gronwall lemma.
\end{proof}
We are now in position to state a proposition in which we prove the linear local well posedness of the linearized problem.
\begin{proposition}\label{LWP}
Consider $\widetilde{V}=(\widetilde{Z},\widetilde{W})$ satisfying \eqref{palla-beam}-\eqref{palla-wave} for some $T>0$. Let $\sigma\geq 0$ and $t\mapsto \mathfrak{R}(t)\in C^0([0,T];\cH^{\sigma}\times\cH^{\sigma})$. 
There exists a unique solution $V\in C^0([0,T) ; \cH^{\sigma}\times\cH^{\s})\cap C^1([0,T);\cH^{\sigma-2}\times\cH^{\s-1})$ solution of the inhomogeneous equation 
\begin{equation}\label{linearetotale}
\partial_tV=\big(\mathfrak{A}(\widetilde{V})+
\mathfrak{B}(\widetilde{V})\big)V+\mathfrak{R}(t)
\end{equation}
with $V(0)=V_0:=(Z_0,W_0)\in \mathcal{H}^{\s}\times\cH^{\s}$ satisfying 
\begin{equation}\label{energia-beam}
\|V\|_{L^{\infty}H^{\sigma}}\leq C_re^{C_RT}(\|V_0\|_{H^\sigma}+T\|\mathfrak{R}\|_{L^{\infty}H^{\sigma}}),
\end{equation}
where the constants $C_r$ and $C_{R}$ depend also on $\sigma$.
\end{proposition}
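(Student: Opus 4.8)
The strategy is a standard vanishing-viscosity argument combining the parabolic regularization of Lemma~\ref{lem:lwp1reg}, the uniform energy estimate of Proposition~\ref{stima-energia-totale}, and a duality/contraction argument for the inhomogeneous term. First I would reduce the inhomogeneous equation \eqref{linearetotale} to the homogeneous case handled before. For the homogeneous equation ($\mathfrak R\equiv 0$) with datum $V_0\in\mathcal H^\sigma\times\mathcal H^\sigma$, Lemma~\ref{lem:lwp1reg} gives, for every $\epsilon>0$, a solution $V^\epsilon$ of \eqref{beam-smoo} on a time interval $[0,T_\epsilon)$; Proposition~\ref{stima-energia-totale} then yields the $\epsilon$-independent bound \eqref{buona-energy-beam}, which, as noted in Remark~\ref{rem:unifeps}, allows one to iterate Lemma~\ref{lem:lwp1reg} and obtain a common existence time, so that each $V^\epsilon$ is defined on the full interval $[0,T)$ with $\sup_{[0,T)}\|V^\epsilon(t)\|_{H^\sigma}\le C_r e^{C_R T}\|V_0\|_{H^\sigma}$.

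Next I would pass to the limit $\epsilon\to 0$. The family $\{V^\epsilon\}$ is bounded in $L^\infty([0,T);\mathcal H^\sigma\times\mathcal H^\sigma)$, hence weakly-$*$ compact; moreover, using the equation \eqref{beam-smoo} together with the mapping properties of $\mathfrak A(\widetilde V),\mathfrak B(\widetilde V)$ in Proposition~\ref{paraparaProp}$(iii)$ and the uniform smoothing estimates \eqref{accalde}, the time derivatives $\partial_t V^\epsilon$ are bounded in $L^\infty([0,T);\mathcal H^{\sigma-2}\times\mathcal H^{\sigma-1})$ uniformly in $\epsilon$ (the $\epsilon{\bf\Delta}V^\epsilon$ term is controlled precisely by the parabolic gain). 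To upgrade weak to strong convergence, one shows that $\{V^\epsilon\}$ is Cauchy in $C^0([0,T);\mathcal H^{\sigma-2}\times\mathcal H^{\sigma-1})$: the difference $V^{\epsilon}-V^{\epsilon'}$ solves \eqref{beam-smoo} with a forcing term of size $O(\epsilon+\epsilon')$ in a lower-regularity space, and the energy estimate \eqref{energy-beam} applied at regularity $\sigma-2$ (together with a bound on the extra viscous terms which cost two derivatives but carry a factor $\epsilon$) gives $\|V^\epsilon-V^{\epsilon'}\|_{L^\infty H^{\sigma-2}}\lesssim (\epsilon+\epsilon')$. The limit $V\in L^\infty([0,T);\mathcal H^\sigma\times\mathcal H^\sigma)\cap C^0([0,T);\mathcal H^{\sigma-2}\times\mathcal H^{\sigma-1})$ then solves \eqref{linprob} (with $\mathfrak R\equiv 0$), and by a standard Bona--Smith-type argument, or by interpolation combined with weak continuity in $\mathcal H^\sigma$, one promotes $V$ to $C^0([0,T);\mathcal H^\sigma\times\mathcal H^\sigma)$; then from the equation $\partial_t V=(\mathfrak A+\mathfrak B)V\in C^0([0,T);\mathcal H^{\sigma-2}\times\mathcal H^{\sigma-1})$, giving the claimed $C^1$ regularity. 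Uniqueness and the estimate \eqref{buona-energy-beam} for the limit follow by applying the energy method directly to a difference of two solutions at regularity $\sigma$ (the a~priori estimate of Proposition~\ref{stima-energia-totale} is really an estimate on genuine solutions of \eqref{linprob}, obtained by letting $\epsilon\to0$ in \eqref{energy-beam} and applying Gronwall).

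Finally, for the inhomogeneous problem \eqref{linearetotale} with $\mathfrak R\in C^0([0,T];\mathcal H^\sigma\times\mathcal H^\sigma)$, I would use the linear solution operator $U(t,t')$ of the homogeneous equation just constructed (well-defined and bounded on $\mathcal H^\sigma\times\mathcal H^\sigma$ by the above, with $\|U(t,t')\|\le C_r e^{C_R(t-t')}$) and Duhamel's formula $V(t)=U(t,0)V_0+\int_0^t U(t,t')\mathfrak R(t')\,dt'$; continuity in $t$ and the bound \eqref{energia-beam} are then immediate from the uniform operator bound on $U$. Alternatively one can rerun the whole viscous scheme with the forcing term present, which changes nothing essential: \eqref{accalde} controls the Duhamel integral of $\mathfrak R$ and the energy identity picks up an extra term $\langle \mathfrak L_{2\sigma}\Phi\mathfrak R,\Phi V\rangle+\langle\mathfrak L_{2\sigma}\Phi V,\Phi\mathfrak R\rangle$ bounded by $C_r\|\mathfrak R\|_{H^\sigma}\|V\|_{H^\sigma}$, so Gronwall produces \eqref{energia-beam}. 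I expect the main obstacle to be the rigorous justification of the limit $\epsilon\to0$ and of the continuity-in-time (not merely $L^\infty$-in-time) of the solution at the top regularity $\sigma$ — this is where one must be careful to separate the a~priori estimate (valid uniformly in $\epsilon$) from the existence statement, and where a Bona--Smith regularization of the data is the cleanest device; the energy/commutator computations themselves are routine given Proposition~\ref{prop:costruzioneMappa}, Proposition~\ref{equivalenzatotale}, and Lemma~\ref{Garding-deltone}.
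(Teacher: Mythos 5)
Your overall route---parabolic regularization \eqref{beam-smoo}, the uniform-in-$\epsilon$ energy estimate of Proposition \ref{stima-energia-totale} (with Remark \ref{rem:unifeps}) to get a common existence time, passage to the limit $\epsilon\to0$, uniqueness by the energy method, and Duhamel for $\mathfrak{R}\neq0$---is the same as the paper's. The genuine difference is how the limit is obtained at the top regularity $\sigma$. The paper does not first prove convergence at lower regularity and then upgrade: it replaces $V_0$ by the mollified datum $V_0^{\epsilon}=\chi(\epsilon^{1/8}|D|)V_0$, solves \eqref{beam-smoo} with this datum, and estimates $V^{\epsilon}-V^{\epsilon'}$ by Duhamel, treating $(\epsilon'-\epsilon){\bf \Delta}V^{\epsilon}$ as a forcing; the four-derivative loss is paid by the quantified smoothing bound $\|V_0^{\epsilon}\|_{H^{\sigma+4}}\leq \epsilon^{-1/2}\|V_0\|_{H^{\sigma}}$, so the family is Cauchy directly in $L^{\infty}H^{\sigma}$, the limit lands in $C^0([0,T);\cH^{\sigma}\times\cH^{\sigma})$ at once, and \eqref{energia-beam} is inherited from \eqref{buona-energy-beam}. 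In other words, the Bona--Smith regularization you invoke at the end is folded into the viscous scheme from the start.

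Two points in your sketch need repair. First, bookkeeping: ${\bf \Delta}$ in \eqref{deltone} costs four derivatives on the beam component, so $(\epsilon-\epsilon'){\bf \Delta}V^{\epsilon}$ is $O(\epsilon)$ only in $\cH^{\sigma-4}\times\cH^{\sigma-2}$, not $\cH^{\sigma-2}\times\cH^{\sigma-1}$; harmless, but the Cauchy property you obtain without mollifying the data is at regularity $\sigma-4$. Second, and more seriously, your fallback ``interpolation combined with weak continuity in $\cH^{\sigma}$'' does not deliver the statement: interpolation with the uniform $H^{\sigma}$ bound only gives $C^0H^{\sigma'}$ for $\sigma'<\sigma$ plus weak continuity at $\sigma$, and without continuity in time of the $H^{\sigma}$ norm this cannot be upgraded to strong continuity. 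Hence the Bona--Smith-type regularization of the data (which you correctly single out as the cleanest device, and which is precisely the paper's $\chi(\epsilon^{1/8}|D|)$ cutoff with its $\epsilon^{-1/2}$ bound) is not optional here---it is the step that produces $V\in C^0([0,T);\cH^{\sigma}\times\cH^{\sigma})$ as claimed. Once that step is executed, the remainder of your argument (uniqueness via the energy estimate on differences, and Duhamel, or equivalently the extra energy term bounded by $C_r\|\mathfrak{R}\|_{H^{\sigma}}\|V\|_{H^{\sigma}}$, for the inhomogeneous equation) goes through and coincides with the paper's proof.
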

\begin{proof}
We consider the following smoothed version of the initial condition $V_0$
\begin{equation*}
V_0^{\epsilon}:=\chi(\epsilon^{\frac18}|D|)V_0
:=\mathcal{F}^{-1}(\chi(\epsilon^{\frac18}|\xi|)\hat{V}_0(\xi))\,,
\end{equation*}
where $\chi$ is a $C^{\infty}$ function with compact support being equal to one on $[-1,1]$ 
and zero on $\R\setminus [-2,2]$. 
We consider moreover the smoothed, 
homogeneous version of \eqref{linearetotale}, i.e. equation \eqref{beam-smoo}. 
All the solutions $V^{\epsilon}$ are defined 
on a common time interval $[0,T)$ with $T>0$ independent on $\epsilon$, because of Proposition \ref{stima-energia-totale}, see Remark \ref{rem:unifeps}.
We prove that the sequence $V^{\epsilon}$ 
converges to a solution of \eqref{beam-smoo} with 
$\epsilon$ equal to zero both in the initial condition and in the equation.
Let  $0<\epsilon'<\epsilon$, set $V^{\epsilon,\epsilon'}=V^{\epsilon}-V^{\epsilon'}$, then 
\begin{equation}\label{aux}
\partial_tV^{\epsilon,\epsilon'}=[\mathfrak{A}(\widetilde{V})+\mathfrak{B}(\widetilde{V })]V^{\epsilon,\epsilon'}-\epsilon{\bf \Delta}V^{\epsilon,\epsilon'}
+{\bf \Delta}V^{\epsilon}(\epsilon-\epsilon')\,.
\end{equation}
Thanks to the discussion above there exists the flow $\Phi(t)$ of the equation 
\[
\partial_tV^{\epsilon,\epsilon'}=[\mathfrak{A}(\widetilde{V})+\mathfrak{B}(\widetilde{V })]V^{\epsilon,\epsilon'}-\epsilon{\bf\Delta}V^{\epsilon,\epsilon'}\,,
\] 
and it has estimates independent of $\epsilon,\epsilon'$. 
By means of Duhamel formula, we can write the solution of \eqref{aux} in the implicit form
\begin{equation*}
V^{\epsilon,\epsilon'}(t,x)=\Phi(t)(V_0^{\epsilon'}-V_0^{\epsilon})+(\epsilon'-\epsilon)\Phi(t)\int_0^t\Phi(s)^{-1}{\bf\Delta}V^{\epsilon}(s,x)ds\,,
\end{equation*}
using  \eqref{buona-energy-beam} we obtain 
$\|V^{\epsilon,\epsilon'}(t,x)\|_{L^{\infty}H^{\sigma}}\leq 
C (\epsilon-\epsilon')\|V_0\|_{ H^{\sigma}}+(\epsilon-\epsilon')\|V_0^{\epsilon}(t)\|_{H^{\sigma+4}}$ . 
We conclude using {the smoothing estimate} $\|V_0^{\epsilon}\|_{H^{\sigma+4}}\leq \epsilon^{-\frac12}\|V_0\|_{H^{\sigma}}$.  
We eventually proved that we have a well defined 
flow of the equation \eqref{linearetotale} with $\mathfrak{R}(t)=0$. 
The non homogeneous case, i.e. $\mathfrak{R}(t)\neq 0$, 
follows again by Duhamel formula.
\end{proof}

\section{Nonlinear well posedness}
In order to 
prove the existence of the flow of the problem  \eqref{ponteparato} with  $G(t)\equiv0$ and $V(0)=V_0$
we consider, in the spirit of \cite{FIJMPA, FI2018, IN2023}, the iterative scheme  
\begin{equation}\label{iterazione-totale}
\begin{aligned}
&(\mathcal{P})_{1}:=\begin{cases}
\partial_t Z_1=-\ii E\opbw(\xi^2)Z_1, 
\\
\partial_t W_1=-\ii E \opbw(|\xi|)W_1, 
\end{cases} \\
 &(\mathcal{P})_n:= \pa_{t}V_{n}=\mathfrak{A}(V_{n-1})V_{n}+\mathfrak{B}(V_{n-1})V_{n}
 +\mathtt{R}V_{n-1}+\mathcal{R}(V_{n-1})\,,
\quad \mbox{for } n\geq 2\,,
\end{aligned}\end{equation}
with the initial conditions 
$V_n(0,x)=V_0:=(Z_0,W_0)$  any $n\geq 1$.

We prove the following iterative lemma.
\begin{lemma}\label{iterativo}
Let $s\geq s_2>4$  (see \eqref{parametrifissati}),  $V_0=(Z_0,W_0)\in\cH^s\times\cH^s$ and 
\[
r:=2\|V_0\|_{H^{s_1}}.
\] 
There exists $T>0$,  
with 
$T:=T(\|V_0\|_{H^{s_2}})$
small enough, such that for any $n\geq 1$, 
the following statements hold true.

\vspace{0.3em}
\noindent
$(S)_{1,n}-$ The problem $(\mathcal{P})_n$ admits a unique  of solution
$V_{n}=(Z_n,W_n)$ with
\[
\begin{aligned}
V_n\in C^0([0,T); \cH^{s}\times\cH^s)\cap C^1([0,T);\cH^{s-2}\times\cH^{s-1})\,.\quad 
\end{aligned}
\]

\vspace{0.3em}
\noindent
$(S)_{2,n}-$ There exists a constant $C_r\geq 1$ such that, 
if 
\[
R:= 4 C_r\|V_0\|_{H^{s_2}}, \qquad 
M := 4 C_r\|V_0\|_{H^{s}}
\]
for any $1\leq m\leq n$ we have 
\begin{align}
&\|V_m\|_{L^{\infty}H^{s_1}}\leq r\,, \label{bassa}
\\
&\|V_m\|_{L^{\infty}H^{s_2}}\leq R\,,
\quad\|\partial_tV_m\|_{L^{\infty}H^{s_1}}
\leq C_r R\,,\label{media}
\\
&\|V_{m}\|_{L^{\infty}H^{s}}\leq M\,,
\quad  \|\partial_tZ_m\|_{L^{\infty}H^{s-2}}
+\|\partial_tW_{m}\|_{L^{\infty}H^{s-1}}\leq C_rM\,.\label{alta}
\end{align}

\vspace{0.3em}
\noindent
$(S)_{3,n}-$ For any
$1\leq m\leq n$ we have 
\begin{equation}\label{cauchy}
\|V_m-{V_{m-1}}\|_{L^{\infty}H^{s_1}}\leq 2^{-m} r\,.
\end{equation}
\end{lemma}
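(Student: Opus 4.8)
The statement is the standard induction for a Kato-type quasilinear iterative scheme, so the plan is to prove $(S)_{1,n}$, $(S)_{2,n}$ and $(S)_{3,n}$ simultaneously by induction on $n$, with the time $T$ chosen (once and for all, depending only on $\|V_0\|_{H^{s_2}}$) small enough to close all the estimates. The base case $n=1$ is immediate: $(\mathcal{P})_1$ is a diagonal constant-coefficient linear system whose flow is the unitary group generated by $-\ii E\opbw(\x^2)$ and $-\ii E\opbw(|\x|)$, so $V_1$ exists globally in $C^0H^{s}\cap C^1(H^{s-2}\times H^{s-1})$, is isometric on every $H^\sigma$, hence $\|V_1\|_{L^\infty H^{s_1}}=\|V_0\|_{H^{s_1}}=r/2\le r$, $\|V_1\|_{L^\infty H^{s_2}}=\|V_0\|_{H^{s_2}}\le R$, $\|V_1\|_{L^\infty H^{s}}=\|V_0\|_{H^{s}}\le M$; the bounds on $\partial_t V_1$ follow from the equation $(\mathcal{P})_1$ and Theorem \ref{azione}; and $(S)_{3,1}$ is vacuous if we set $V_0\equiv$ constant-in-time, or trivial for the difference $V_1-V_0$ once $T$ is small.

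\textbf{Induction step, existence and the $H^{s_1},H^{s_2},H^s$ bounds.} Assume $(S)_{1,m}$–$(S)_{3,m}$ for all $m\le n$; we prove them for $n+1$. By the inductive hypothesis $(S)_{2,n}$, the function $\widetilde V:=V_n$ satisfies exactly the hypotheses \eqref{palla-beam}-\eqref{palla-wave} with the given $r$ and $R$. Therefore Proposition \ref{LWP} applies to the linear equation $(\mathcal{P})_{n+1}$, with $\mathfrak{R}(t):=\mathtt{R}V_n+\mathcal{R}(V_n)$: since $\mathtt R\in\mathcal L(\mathcal H^\sigma)$ by \eqref{restilineari} and $\mathcal R$ is bilinear with \eqref{stimaRRR}, we get $\mathfrak R\in C^0([0,T];\mathcal H^\sigma\times\mathcal H^\sigma)$ for each $\sigma\in\{s_1,s_2,s\}$, whence $(S)_{1,n+1}$ and the a priori bound
\begin{equation}\label{plan-apriori}
\|V_{n+1}\|_{L^\infty H^\sigma}\le C_r e^{C_RT}\big(\|V_0\|_{H^\sigma}+T\|\mathtt RV_n+\mathcal R(V_n)\|_{L^\infty H^\sigma}\big).
\end{equation}
For $\sigma=s_1$ one estimates $\|\mathfrak R\|_{L^\infty H^{s_1}}\lesssim_s \|V_n\|_{L^\infty H^{s_1}}+\|V_n\|_{L^\infty H^{s_1}}^2\le r+r^2\le C_r$ using \eqref{bassa}; plugging into \eqref{plan-apriori} and recalling $r=2\|V_0\|_{H^{s_1}}$, we choose $T$ small so that $C_re^{C_RT}(\tfrac r2+TC_r)\le r$, giving \eqref{bassa} for $m=n+1$. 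For $\sigma=s_2$: by \eqref{stimaRRR} with $s\rightsquigarrow s_2$ and $s_1+1\le s_2$, $\|\mathcal R(V_n)\|_{H^{s_2}}\lesssim_s\|V_n\|_{H^{s_2}}\|V_n\|_{H^{s_1+1}}\le R\cdot r$, so \eqref{plan-apriori} with $R=4C_r\|V_0\|_{H^{s_2}}$ and $T$ small (absorbing $C_re^{C_RT}$ close to $C_r$ and $TC_R(r+1)\le 1$, say) yields $\|V_{n+1}\|_{L^\infty H^{s_2}}\le R$; the bound on $\partial_t V_{n+1}$ in $H^{s_1}$ then comes from reading off $(\mathcal P)_{n+1}$ and using item $(iii)$ of Proposition \ref{paraparaProp} (the orders of $\mathfrak A,\mathfrak B$) together with \eqref{restilineari}, \eqref{stimaRRR}. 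The $H^s$ estimates \eqref{alta} are identical with $s_2\rightsquigarrow s$ and $M=4C_r\|V_0\|_{H^s}$, using $\|\mathcal R(V_n)\|_{H^s}\lesssim_s\|V_n\|_{H^s}\|V_n\|_{H^{s_1+1}}\le M r$. Note that the same $T$ works for all three because it is dictated only by $r$, $\|V_0\|_{H^{s_2}}$ and the (fixed) constants $C_r,C_R$; the $H^s$-estimate uses $T=T(\|V_0\|_{H^{s_2}})$ but the constant $M$ is allowed to depend on $\|V_0\|_{H^s}$, which is the usual trick for propagating high regularity on a time interval fixed by the low-regularity norm.

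\textbf{The contraction estimate $(S)_{3,n+1}$.} This is the main point. Set $H_{n+1}:=V_{n+1}-V_n$. Subtracting $(\mathcal P)_{n}$ from $(\mathcal P)_{n+1}$,
\begin{equation}\label{plan-diffeq}
\partial_t H_{n+1}=\big(\mathfrak A(V_n)+\mathfrak B(V_n)\big)H_{n+1}
+\mathcal E_n,\qquad H_{n+1}(0)=0,
\end{equation}
where the forcing is
\begin{equation}\label{plan-En}
\mathcal E_n:=\big(\mathfrak A(V_n)-\mathfrak A(V_{n-1})\big)V_n
+\big(\mathfrak B(V_n)-\mathfrak B(V_{n-1})\big)V_n
+\mathtt R H_n+\big(\mathcal R(V_n)-\mathcal R(V_{n-1})\big).
\end{equation}
We apply Proposition \ref{LWP} to \eqref{plan-diffeq} at regularity $\sigma=s_1$, using $V_n$ as the coefficient (it satisfies \eqref{palla-beam}-\eqref{palla-wave}), obtaining $\|H_{n+1}\|_{L^\infty H^{s_1}}\le C_re^{C_RT}\,T\,\|\mathcal E_n\|_{L^\infty H^{s_1}}$ since $H_{n+1}(0)=0$. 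It remains to bound $\|\mathcal E_n\|_{L^\infty H^{s_1}}\lesssim_s \|H_n\|_{L^\infty H^{s_1}}$ times a constant depending only on $r,R,M$. The linear term $\mathtt R H_n$ is controlled by \eqref{restilineari}. For the bilinear remainder, $\mathcal R(V_n)-\mathcal R(V_{n-1})=\mathcal R(H_n,V_n)+\mathcal R(V_{n-1},H_n)$ (polarizing the bilinear form), and \eqref{stimaRRR} with $s\rightsquigarrow s_1$ — note $s_1\ge s_1+1$ fails, so instead one uses the version with $s\rightsquigarrow s_1$ and the lower index $s_1$, i.e. the symmetric tame bound $\|\mathcal R(A,B)\|_{H^{s_1}}\lesssim_s\|A\|_{H^{s_1}}\|B\|_{H^{s_1}}$ available because $s_1>s_1-1+\tfrac12$ etc., or, more safely, one runs this whole difference argument at regularity $\sigma=s_2$ with the loss $s_1+1\le s_2$ and then interpolates; I would present it at $\sigma=s_2$ to be safe, using \eqref{media} to bound $\|V_n\|_{H^{s_2}},\|V_{n-1}\|_{H^{s_2}}\le R$. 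For the paradifferential differences, $\mathfrak A(V_n)-\mathfrak A(V_{n-1})$ only involves $A_w$ (the beam block is independent of $V$), and by \eqref{figaro55} its symbol difference has $|A_w(V_n)-A_w(V_{n-1})|_{1,p,\alpha}\lesssim\|H_n\|_{H^{p+3/2}}$, so by Theorem \ref{azione} $\|(\mathfrak A(V_n)-\mathfrak A(V_{n-1}))V_n\|_{H^{s_2-1}}\lesssim \|H_n\|_{H^{s_1}}\|V_n\|_{H^{s_2+1}}$ — here one needs $s_2+1\le s$ is false in general, so this forces running the contraction at $\sigma=s_1$ after all, putting the derivative loss on $H_n$: $\|(\mathfrak A(V_n)-\mathfrak A(V_{n-1}))V_n\|_{H^{s_1-1}}\lesssim\|A_w(V_n)-A_w(V_{n-1})\|_{1,s_0,\cdot}\|V_n\|_{H^{s_1+1}}\lesssim \|H_n\|_{H^{s_0+3/2}}\|V_n\|_{H^{s_1+1}}\le \|H_n\|_{H^{s_1}}R$, which is exactly why the parameters are set so that $s_1=s_0+3/2$ and $s_2=s_1+2$. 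The term $(\mathfrak B(V_n)-\mathfrak B(V_{n-1}))V_n$ is handled identically with \eqref{figaro66} and has order $\tfrac12$, hence a gain. Collecting, $\|\mathcal E_n\|_{L^\infty H^{s_1}}\le C(r,R,M)\|H_n\|_{L^\infty H^{s_1}}$, and then
\begin{equation}\label{plan-contr}
\|H_{n+1}\|_{L^\infty H^{s_1}}\le C_re^{C_RT}\,T\,C(r,R,M)\,\|H_n\|_{L^\infty H^{s_1}}.
\end{equation}
Shrinking $T$ (depending only on $\|V_0\|_{H^{s_2}}$, since $r,R$ and these constants do) so that $C_re^{C_RT}TC(r,R,M)\le\tfrac12$, and using $(S)_{3,n}$, i.e. $\|H_n\|_{L^\infty H^{s_1}}\le 2^{-n}r$, we get $\|H_{n+1}\|_{L^\infty H^{s_1}}\le 2^{-(n+1)}r$, closing $(S)_{3,n+1}$. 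Since finitely many smallness conditions on $T$ are imposed, a single $T=T(\|V_0\|_{H^{s_2}})$ suffices, completing the induction. The only real subtlety, and the one I would flag, is the bookkeeping of derivative losses in $\mathcal E_n$: the difference estimates \eqref{figaro55}-\eqref{figaro66}-\eqref{stimaRRR} gain in the coefficient regularity exactly $3/2$ derivatives, which is why the contraction must be run in the low norm $H^{s_1}$ with $s_1=s_0+3/2$ and why one cannot hope to contract directly in $H^{s_2}$ or $H^s$.
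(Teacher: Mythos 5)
Your overall architecture is the paper's: induct on $n$, apply Proposition \ref{LWP} with frozen coefficient $\widetilde V=V_{n-1}$ and forcing $\mathfrak R=\mathtt R V_{n-1}+\mathcal R(V_{n-1})$ to get existence and the $H^{s_2}$, $H^{s}$ bounds (with $R,M$ carrying the factor $4C_r$), read the time-derivative bounds off the equation, and prove the contraction in the low norm $H^{s_1}$ using the Lipschitz symbol estimates \eqref{figaro55}--\eqref{figaro66}; your final observation that the loss of $3/2$ derivatives forces the contraction to live at $\sigma=s_1$ (after your detour through $\sigma=s_2$, which indeed fails because it would require $\|V_n\|_{H^{s_2+1}}$) is exactly the paper's mechanism.

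There is, however, a genuine gap in your treatment of the low-norm bound \eqref{bassa}. You propose to prove $\|V_{n+1}\|_{L^\infty H^{s_1}}\le r$ directly from the linear energy estimate \eqref{energia-beam} at $\sigma=s_1$, choosing $T$ so that $C_re^{C_RT}(\tfrac r2+TC_r)\le r$. This cannot be arranged: it forces $C_re^{C_RT}\le 2$, and the constant $C_r\ge 1$ in \eqref{energia-beam} comes from the norm-equivalence of Proposition \ref{equivalenzatotale} and does not tend to $1$ as $T\to 0$; it may well exceed $2$. This is precisely why $R$ and $M$ are defined with the prefactor $4C_r$ while $r=2\|V_0\|_{H^{s_1}}$ carries no such factor — the $H^{s_1}$ bound must hold with the \emph{same} $r$ at every step, since $r$ enters the hypotheses \eqref{palla-beam}--\eqref{palla-wave} (and ultimately the ellipticity/smallness condition), so one is not allowed to lose a factor $C_r$ there. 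The paper's route is to prove the contraction estimate $(S)_{3,n}$ \emph{first} (its forcing is bounded using only the inductive bounds at level $n-1$) and then obtain \eqref{bassa} by telescoping: $\|V_n\|_{L^\infty H^{s_1}}\le\|V_1\|_{L^\infty H^{s_1}}+\sum_{m=2}^{n}\|V_m-V_{m-1}\|_{L^\infty H^{s_1}}\le \tfrac r2+\sum_{m\ge 2}2^{-m}r\le r$, using that the flow of $(\mathcal P)_1$ is an isometry. Your contraction argument does not rely on \eqref{bassa} at the new level, so the repair is available inside your own scheme — but the step as written fails and the order of the two arguments must be reversed. A secondary inaccuracy: to bound $\|\mathcal R(V_n)\|_{H^{s_1}}$ you invoke \eqref{stimaRRR} at $s=s_1$, outside its stated range $s\ge s_1+1$; one should instead estimate the $H^{s_1}$ (or $H^{s_1+1}$) norm of the forcing using the $H^{s_2}$ control from \eqref{media}, as the paper does.
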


\begin{proof}
We proceed by induction over $n\in\N$. 
$(S)_{1,1}$ holds because $(\mathcal{P})_1$ 
is just a system of uncoupled linear Schr\"odinger and half-wave equations. 
The linear flows are isometries in $\cH^{s}$, for any $s\in \R$, 
then $(S)_{2,1}$ and $(S)_{3,1}$ follow (recall the definitions of $r, R$ and $M$).

\noindent
Assuming $({S})_{1,n-1}$ and $({S})_{2,n-1}$ we note that \eqref{bassa}, \eqref{media} imply that $V_{n-1}$ satisfies\eqref{palla-beam}, \eqref{palla-wave} with $R\rightsquigarrow C_r R$.
Hence  Proposition \ref{LWP} applies with
$\widetilde{V} \rightsquigarrow V_{n-1}$ and we obtain
the local existence of the solutions of $(\mathcal{P})_{n}$ implying $(S)_{1,n}$.

\noindent
Let us prove $({S})_{2,n}$ starting from \eqref{media}. 
Using \eqref{energia-beam} with $\sigma=s_2$, $\mathfrak{R}(t)=\mathtt{R}V_{n-1}+\mathcal{R}(V_{n-1})$, 
the bounds \eqref{stimaRRR}-\eqref{restilineari} 
and $({S})_{2,n-1}$ we get
{\begin{equation*}
\begin{aligned}
\|V_m\|_{L^{\infty}H^{s_2}}
&\leq 
C_re^{C_RT}(\|V_0\|_{H^{s_2}}+C T (1+\| V_{n-1}\|_{L^{\infty} H^{s_1+1}}) \| V_{n-1} \|_{L^{\infty} H^{s_2}})\\
&\le C_re^{C_RT}(\|V_0\|_{H^{s_2}}+C T (1+R) R)
\end{aligned}
\end{equation*}}
where the constant $C>0$ is the one coming from 
\eqref{stimaRRR}-\eqref{restilineari} 
and depends only  on $s$. 
We choose $T$ small enough in such a way that
{\begin{equation*}
T \exp(C_R T)\leq \frac{1}{2  C\,C_r\,(1+R)}\,, 
\quad  e^{C_R\,T}\leq 2 \,,
\end{equation*}}
therefore we can infer 
\begin{equation*}
\|V_n\|_{L^{\infty}H^{s_2}}\leq \frac{R}{2}+\frac{R}{2}\leq R,
\end{equation*}
because of the choice of $R$ in the statement, hence the $(S)_{2, n}$-\eqref{media} is proved. To prove the bound on the time derivative in $L^{\infty} H^{s_1}$ we observe that
$V_n$ solves the problem $(\mathcal{P})_n$ and we can bound the right hand side of the equation by using the inductive hypothesis and the estimates provided in Proposition \ref{paraparaProp}.  To prove \eqref{alta} we reason similarly: by means of  \eqref{energia-beam} with $\sigma=s$, \eqref{stimaRRR}-\eqref{restilineari} and $({S}_{2,n-1})$ we obtain
\begin{equation*}
\begin{aligned}
\|V_n\|_{L^{\infty}H^s}&\leq C_re^{C_RT}(\|V_0\|_{H^{s}}+CT (1+R) M)\\
\end{aligned}\end{equation*}
We conclude {with the same choice of $T$} as before and with $M$ as in the statement. Concerning the estimate on the time derivative we need to use again the estimates in Proposition \ref{paraparaProp} and the inductive hypothesis. The \eqref{bassa} is a consequence of $({S})_{3,n}$ which we prove below.\\
We set $\check{V}_n:=V_n-V_{n-1}$,  then
\begin{align}
&\partial_t\check{V}_n=[\mathfrak{A}(V_{n-1})+\mathfrak{B}(V_{n-1})](\check{V}_{n})+\mathtt{R}\check{V}_{n-1}+f_n\label{chitam}\\
&f_n:=\mathcal{R}({V}_{n-1})-\mathcal{R}(V_{n-2})+[\mathfrak{A}(V_{n-1})-\mathfrak{A}(V_{n-2})]V_{n-1}+[\mathfrak{B}(V_{n-1})-\mathfrak{B}(V_{n-2})]V_{n-1}
\end{align}
 Since $\mathcal{R}$ is bilinear satisfying the estimate \eqref{stimaRRR}, we have, by using \eqref{bassa},
\begin{equation*}
\begin{aligned}
\|\mathcal{R}(V_{n-1})-\mathcal{R}(V_{n-2})\|_{L^{\infty}H^{s_1}}
\leq C_r\|\check{V}_{n-1}\|_{L^{\infty}H^{s_1}}
\leq C_r2^{-n+1}r\,.
\end{aligned}
\end{equation*}
Furthermore, by using $(S)_{2,n-1}$-\eqref{media}, the contraction estimates \eqref{figaro55}-\eqref{figaro66}, the action Theorem \ref{azione}
and triangular inequality we have 
\begin{equation*}
\begin{aligned}
\| [\mathfrak{A}(V_{n-1})-&\mathfrak{A}(V_{n-2})] V_{n-1}\|_{L^{\infty}H^{s_1}}+\| [\mathfrak{B}(V_{n-1})-\mathfrak{B}(V_{n-2})] V_{n-1}\|_{L^{\infty}H^{s_1}}\leq
\\&
C_R \|\widetilde{V}_{n-1}\|_{L^{\infty}H^{s_1}}
\leq  C_R 2^{-n+1}r\,.
\end{aligned}
\end{equation*}
Hence we proved that 
\[
\| f_n \|_{L^{\infty} H^{s_1}}\leq  C_R 2^{-n+1}r\,.
\]
Applying Proposition \ref{LWP} with $\sigma=s_1$ to  \eqref{chitam}
with initial condition $\check{V}_n(0)=0$ we obtain 
\begin{equation*}
\|\check{V}_n\|_{L^{\infty}H^{s_1}}\leq T C_{R} e^{C_RT} r 2^{-n+1}.
\end{equation*}
We conclude by choosing $T$ such that $e^{C_RT}TC_{R}\leq 1/4.$ \end{proof}

We  use Lemma \ref{iterativo} to prove the following.
\begin{proposition}\label{teototalecomplesso}
Let $s\geq s_2>4$. For any 
$V_0:=(Z_0,W_0)\in\cH^{s}\times\cH^s$ 
there exist 
$
T:=T(\|V_0\|_{H^{s_2}})>0\,,
$
and a  unique solution 
\[
V(t):=(Z(t),W(t))\in C([0,T);\cH^s\times\cH^s)\cap C^1([0,T);\cH^{s-2}\times\cH^{s-1})
\]
of the  system \eqref{ponteparato}
with initial condition  
$V_0:=(Z_0,W_0)\in\cH^{s}\times\cH^s$ at $t=0$.
Moreover the solution depends continuously 
on the initial datum in $\cH^s\times\cH^s$.
\end{proposition}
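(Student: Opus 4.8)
The plan is to run the quasilinear iteration of Lemma \ref{iterativo}, identify its limit as a solution, and then recover the top regularity in time and the continuity of the flow by a regularization argument. Throughout, $V_0=(Z_0,W_0)\in\cH^s\times\cH^s$ with $s\ge s_2>4$ is fixed, $T=T(\|V_0\|_{H^{s_2}})>0$ is the time furnished by Lemma \ref{iterativo}, and $(V_n)_{n\ge1}$ is the sequence defined by \eqref{iterazione-totale} (the forcing $G$, being a fixed element of $C^\infty([0,T];\cH^{\s}\times\cH^{\s})$ for all $\s$, is treated exactly like the bounded term $\mathtt{R}V_{n-1}$, or is reinserted at the end via Duhamel, so I keep $G\equiv0$). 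First I would use $(S)_{3,n}$ to get that $(V_n)$ is Cauchy in $C^0([0,T);\cH^{s_1}\times\cH^{s_1})$; denote by $V$ its limit. Interpolating \eqref{intorpolostima} between this convergence and the uniform bound $\|V_n\|_{L^\infty H^s}\le M$ of \eqref{alta} yields $V_n\to V$ in $C^0([0,T);\cH^{\s}\times\cH^{\s})$ for every $s_1\le\s<s$, while lower semicontinuity of $\|\cdot\|_{H^s}$ under weak limits gives $V\in L^\infty([0,T);\cH^s\times\cH^s)\cap C_w([0,T);\cH^s\times\cH^s)$ with $\|V\|_{L^\infty H^s}\le M$. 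Then I would pass to the limit in $(\mathcal{P})_n$: using the action Theorem \ref{azione}, the Lipschitz bounds \eqref{figaro55}--\eqref{figaro66}, and the estimates \eqref{stimaRRR}--\eqref{restilineari}, each summand in the right hand side converges, in $C^0([0,T);\cH^{\s-2}\times\cH^{\s-1})$ for every $\s<s$, to the corresponding summand built on $V$; hence $\pa_tV_n$ converges and $V$ solves \eqref{ponteparato} with $V(0)=V_0$. Feeding $V\in L^\infty H^s$ back into the equation and using the mapping properties of $\mathfrak{A},\mathfrak{B},\mathtt{R},\mathcal{R}$ (Proposition \ref{paraparaProp}) then gives $\pa_tV\in L^\infty([0,T);\cH^{s-2}\times\cH^{s-1})$.

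For uniqueness, given two solutions $V,V'$ as in the statement with $V(0)=V'(0)$, set $D:=V-V'$; then $D$ solves \eqref{linearetotale} with $\widetilde V\rightsquigarrow V'$ and forcing
\[
\mathfrak{R}:=\big[\mathfrak{A}(V)-\mathfrak{A}(V')\big]V+\big[\mathfrak{B}(V)-\mathfrak{B}(V')\big]V+\mathtt{R}D\,.
\]
Since only the wave blocks of $\mathfrak{A},\mathfrak{B}$ depend on their argument, the contraction estimates \eqref{figaro55}--\eqref{figaro66}, the boundedness \eqref{restilineari}, and the action Theorem \ref{azione} with the symbol-regularity index $s_0$ (so that, on the factor $V$, only the \emph{bounded} quantity $\|V\|_{H^{s_1+1}}\le\|V\|_{H^{s_2}}$ enters, recalling $s\ge s_2$) give $\|\mathfrak{R}(t)\|_{H^{s_1}}\le K\|D(t)\|_{H^{s_1}}$, with $K=K(\|V\|_{L^\infty H^{s_2}},\|V'\|_{L^\infty H^{s_2}})$. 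Inserting this into the linear estimate \eqref{energia-beam} with $\s=s_1$ on a subinterval $[t_0,t_0+\tau]\subset[0,T)$, with $\tau$ chosen---independently of $t_0$---so small that $C_rC_RK\tau e^{C_R\tau}\le\tfrac12$, forces $D\equiv0$ there; iterating covers $[0,T)$. In particular $V$ does not depend on the data-free choices in the scheme \eqref{iterazione-totale}.

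The genuinely delicate part---and the step I expect to be the main obstacle---is to upgrade $V\in C_w([0,T);\cH^s\times\cH^s)$ to strong continuity and to establish continuous dependence in the $\cH^s\times\cH^s$ topology, since the modified norm $|\cdot|_{\widetilde V,s}$ of \eqref{regnodinapoli} is only \emph{equivalent} to $\|\cdot\|_{H^s}$ (with the constant $C_r>1$ of \eqref{normabeamequiv}), so the energy identity alone does not deliver strong continuity. I would handle both by a Bona--Smith regularization, as in \cite{FIJMPA,iandolikdv}. With $\chi$ as in the proof of Proposition \ref{LWP}, set $V_0^\delta:=\chi(\delta|D|)V_0$, so that $\|V_0^\delta\|_{H^s}\le\|V_0\|_{H^s}$, $V_0^\delta\to V_0$ in $\cH^s\times\cH^s$, $\|V_0^\delta\|_{H^{s+\rho}}\lesssim\delta^{-\rho}\|V_0\|_{H^s}$, and, since $\chi(\delta|D|)-\chi(\delta'|D|)$ localizes to $|\xi|\gtrsim\delta^{-1}$ when $\delta'<\delta$, $\|V_0^\delta-V_0^{\delta'}\|_{H^{s-\rho}}\lesssim\delta^\rho\|V_0\|_{H^s}$. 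Because $\|V_0^\delta\|_{H^{s_2}}\le\|V_0\|_{H^{s_2}}$, Lemma \ref{iterativo} run at regularity $s+1$ produces solutions $V^\delta$ on a common $[0,T)$ with $V^\delta\in C^0([0,T);\cH^{s+1}\times\cH^{s+1})$, $\|V^\delta\|_{L^\infty H^s}\lesssim\|V_0\|_{H^s}$ and $\|V^\delta\|_{L^\infty H^{s+\rho}}\lesssim\delta^{-\rho}\|V_0\|_{H^s}$. Estimating $V^\delta-V^{\delta'}$ by \eqref{energia-beam} at regularity $s_1$ gives the small low-norm bound $\|V^\delta-V^{\delta'}\|_{L^\infty H^{s_1}}\lesssim\delta^{\,s-s_1}\|V_0\|_{H^s}$, and combining this with the high-norm bounds inside the $H^s$ energy inequality for the difference---the dangerous contribution $[\mathfrak{A}(V^\delta)-\mathfrak{A}(V^{\delta'})]V^{\delta}$ being controlled by $\|V^\delta-V^{\delta'}\|_{H^{s_1}}\|V^\delta\|_{H^{s+1}}\lesssim\delta^{\,s-s_1-1}\|V_0\|_{H^s}^2\to0$, which uses $s-s_1-1>0$---shows that $(V^\delta)$ is Cauchy in $C^0([0,T);\cH^s\times\cH^s)$. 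Its limit is $V$ by the uniqueness above; being a uniform-in-time limit of $\cH^s$-continuous maps, $V\in C^0([0,T);\cH^s\times\cH^s)$. Plugging $V\in C^0H^s$ into \eqref{ponteparato} and using the $t$-continuity of $\mathfrak{A}(V(t)),\mathfrak{B}(V(t))$ (again via \eqref{figaro55}--\eqref{figaro66}) then gives $\pa_tV\in C^0([0,T);\cH^{s-2}\times\cH^{s-1})$, hence $V\in C^1$.

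Finally, for continuous dependence, given $V_0,\widetilde V_0$ in a fixed ball of $\cH^s\times\cH^s$---so that the corresponding solutions $V,\widetilde V$ live on a common $[0,T)$---I would write, for $\delta>0$,
\[
\|V-\widetilde V\|_{L^\infty H^s}\le\|V-V^\delta\|_{L^\infty H^s}+\|V^\delta-\widetilde V^\delta\|_{L^\infty H^s}+\|\widetilde V^\delta-\widetilde V\|_{L^\infty H^s}\,.
\]
The outer two terms are $\le\omega(\delta)$ with $\omega(\delta)\to0$ uniformly over the ball, by the previous paragraph; the middle term, for each fixed $\delta$, is $\le C(\delta)\|V_0-\widetilde V_0\|_{H^s}$ by the linear estimate \eqref{energia-beam} applied to $V^\delta-\widetilde V^\delta$ (no derivative is lost on the factors $V^\delta,\widetilde V^\delta$, which belong to $\cH^{s+1}\times\cH^{s+1}$). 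Choosing $\delta$ small and then $\|V_0-\widetilde V_0\|_{H^s}$ small gives the continuity of the solution map, and completes the proof.
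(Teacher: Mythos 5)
Your proposal follows essentially the same route as the paper: take the limit of the iterative scheme of Lemma \ref{iterativo} (Cauchy in $H^{s_1}$, bounded in $H^s$, interpolation), pass to the limit in the equation, and then recover strong $\cH^s$-continuity, uniqueness and continuous dependence by a Bona--Smith regularization (your frequency cut-off $\chi(\delta|D|)$ plays the role of the paper's projectors $\Pi_N$), treating $G$ by Duhamel. Two small inaccuracies are worth flagging. First, in the uniqueness step the difference $D=V-V'$ solves \eqref{linearetotale} with forcing that must also contain $\mathcal{R}(V)-\mathcal{R}(V')$, which you omitted; this is harmless because the bilinearity of $\mathcal{R}$ together with \eqref{stimaRRR} gives $\|\mathcal{R}(V)-\mathcal{R}(V')\|_{H^{s_1}}\le C\|D\|_{H^{s_1}}$, so your Gronwall/iteration argument goes through unchanged once the term is included. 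Second, in the continuous-dependence step the claim that $\|V-V^\delta\|_{L^\infty H^s}\le\omega(\delta)$ \emph{uniformly over a ball} of data in $\cH^s$ is not literally true (the tail $\|(1-\chi(\delta|D|))V_0\|_{H^s}$ is not uniformly small on a ball); the standard Bona--Smith fix is to estimate $\|\widetilde V^\delta-\widetilde V\|$ through the smoothed data differences, which yields a bound of the form $\omega_{V_0}(\delta)+C\|V_0-\widetilde V_0\|_{H^s}$ and still gives continuity at the fixed datum --- this is exactly the step the paper also leaves to \cite{BSkdv,FIJMPA}.
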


\begin{proof}
We start by assuming that $G(t)\equiv0$ in \eqref{ponteparato}.
We consider the sequence of solutions 
$V_n$ given by the problems $(\mathcal{P})_n$ in \eqref{iterazione-totale}, 
which exists in virtue of Lemma \ref{iterativo}. 
Again, by Lemma \ref{iterativo}, 
we know that $(V_n)_n$ is a Cauchy sequence in 
$C^0([0,T);\cH^{s_1}\times\cH^{s_1})$ by \eqref{cauchy}
and it is bounded in  $C^0([0,T);\cH^{s}\times\cH^{s})$  by \eqref{alta}. 

By interpolation we deduce that  
it is a Cauchy sequence in $C^0([0,T);\cH^{s'}\times\cH^{s'})$ 
for any $s_1\leq s'<s$. 
The limit of such sequence $V(t)$ is in 
$C^0([0,T);\cH^{s'}\times\cH^{s'})$ and verifies 
$\|V(t)\|_{L^{\infty}H^s}\leq M$ with $M>0$ given by Lemma \ref{iterativo}.  
Analogously one proves that $V(t)$ belongs to 
$C^1([0,T);\cH^{s'-2}\times\cH^{s'-1})$. 
We now prove that $V(t)$  
solves the system \eqref{ponteparato}
in the sense of distributions. 
In order to do that, it is enough to show that
\begin{equation*}
\|\mathcal{A}(V)-\mathcal{A}(V_n)\|_{L^{\infty}H^{s'-2}}\rightarrow 0\,,
\end{equation*}
where $\mathcal{A}(V)$  
is  the r.h.s of \eqref{ponteparato}. 
This is a consequence of the estimates 
\eqref{stimaRRR},\eqref{figaro55}, \eqref{figaro66} the bilinearity of $\mathcal{R}$, 
the linearity of $\mathtt{R}$ and the  bounds  in $(S)_{2,n}$ of Lemma \ref{iterativo} 
with $V(t)\in C^0([0,T); \cH^{s'}\times\cH^{s'})$. 
Up to now we proved that there exists a solution $V(t)$ 
which is in $L^{\infty}([0,T);\cH^s\times\cH^s)\cap C^0([0,T);\cH^{s'}\times\cH^{s'})$.
We now show that it is actually continuous in time with values in $\cH^s\times\cH^s$. 
The idea, following Bona-Smith \cite{BSkdv}, is to prove that 
$V(t)$ is the strong limit of functions in $C^0([0,T); \cH^s\times\cH^s)$. 
We consider the smoothed initial conditions 
$V_{0,N}:=\Pi_NV_0$ where 
$\Pi_N$ is the projector on the first $N$ Fourier modes. 
The initial condition $V_{0,N}$ belongs to $C^{\infty}\times C^{\infty}$. 
Owing to the reasoning above we find a sequence 
$V_N(t)$ of solutions of \eqref{ponteparato}
which are in $C^0([0,T);\cH^{s+1}\times\cH^{s+1})$ 
and which verifies the estimates
\begin{align}
\|V_N(t)\|_{H^s}&\leq C \|V_{0,N}\|_{H^s}\leq C \|V_{0}\|_{H^s}\,,\label{Vbass} 
\\
 \|V_N(t)\|_{H^{s+1}}&\leq C \|V _{0,N}\|_{H^{s+1}}\leq C N\|V_{0}\|_{H^{s}}\,.\label{Valt}
\end{align}
We now show that 
the function
$\check{V}(t):=V(t)-V_N(t)$ goes to zero as $N\to\infty$ in the norm $L^{\infty}H^{s}$. Setting  $\mathfrak{C}(\check{V})=\mathfrak{A}(\check{V})+\mathfrak{B}(\check{V})$, 
we obtain
\begin{equation*}
\partial_t{\check{V}}=\mathfrak{C}(V)\check{V}+\underbrace{\mathtt{R}\check{V}
+\big(\mathfrak{C}(V)-\mathfrak{C}(V_N)\big)V_N+\mathcal{R}(V)-\mathcal{R}(V_N)}_{:=Q}
\end{equation*}
By Proposition \ref{paraparaProp} and the fact that $A_b$ in \eqref{matriceregobeam} is independent of $V$ we have that
\[
\| Q \|_{L^{\infty}H^s}\lesssim_s \|\check{V}\|_{H^{s_1}}\|V_N\|_{H^{s+1}}+\|\check{V}\|_{H^{s}}.
\]

\noindent
We use Proposition \ref{LWP} with $\mathfrak{R}=Q$, the bounds \eqref{Vbass}, \eqref{Valt} and we obtain
\begin{equation*}
\begin{aligned}
\|\check{V}\|_{L^{\infty}H^{s}}&\leq 
Ce^{CT}\|V_0-V_{0,N}\|_{H^s}+TCe^{TC}\|Q\|_{L^{\infty}H^s}
\\
&\leq Ce^{CT}\|V_0-V_{0,N}\|_{H^s}
+TCe^{TC}C(\|\check{V}\|_{H^{s_1}}\|V_N\|_{H^{s+1}}+\|\check{V}\|_{H^{s}})
\\
&\leq Ce^{CT}\|V_0-V_{0,N}\|_{H^s}
+TCe^{TC}C(N^{s_1-s}\|\check{V}\|_{H^{s}}N\|V_N\|_{H^{s}}+\|\check{V}\|_{H^{s}})\,,
\end{aligned}
\end{equation*}
where $C>0$ is a constant depending on $\|V_0\|_{H^s}$. 
Since $s>s_1+1$ 
we can send $N$ to infinity and 
obtain that $\|\check{V}\|_{L^{\infty}H^{s}}$ goes to zero. Therefore $V(t)$ is the strong limit of the sequence $V_N$ in $C^0([0,T); \cH^s\times\cH^s)$. 

\noindent
The uniqueness of the solution $V(t)$ may be obtained by means of similar computations. 
We now prove the continuity of the flow map, here we follow again Bona-Smith \cite{BSkdv}. 
Given a sequence of initial data $(V_0^j)_j$ strongly converging to $V_0$, 
we want to prove that the respective solutions $V^{j}(t)$ 
strongly converge to  $V(t)$ in $\cH^s\times\cH^s$. 
Using the projectors introduced above we get
\begin{equation*}
\begin{aligned}
\|V(t)-V^j(t)\|_{L^{\infty}H^{s}}&\leq \|V(t)-V_N(t)\|_{L^{\infty}H^s}
\\
&+\|V_N(t)-V_N^j(t)\|_{L^{\infty}H^s}+\|V_N^j(t)-V^j(t)\|_{L^{\infty}H^s}\,,
\end{aligned}
\end{equation*}
where $V_N^j(t)$ is the solution with initial condition $V^j_N(0)=\Pi_N V_0^j$.
The three addenda have to be analized separately following 
Bona-Smith \cite{BSkdv}. We can prove that the first and the third summands go to zero as $N$ goes to $+\infty$ by reasoning as above.
One can prove that the second addendum is small taking $j$ sufficiently large with $N$.
For the detailed proof of this last step we refer to 
 \cite{FIJMPA}, which has also similar notations.
 
 \noindent
 The case $G(t)\neq0$ can be treated by means of Duhamel formulation if the problem
 \eqref{sistemaponte} using the flow of the homogeneous equation associated to
 \eqref{sistemaponte}.
\end{proof}
\begin{proof}[{\bf Proof of Theorem \ref{thm:main1}}]
By Proposition \ref{paraparaProp}
we have that system \eqref{sistemaponte} is equivalent to 
system \eqref{ponteparato}. 
By Proposition \ref{teototalecomplesso} 
we have the well posedness of this last system. 
We recover the well posedness for \eqref{sistemaponte} 
recalling Remark \ref{rmk:datiiniziali}, which describes how to pass from the set of complex coordinates to the original ones.
\end{proof}

\vspace{0.5cm}
\gr{Acknowledgements.} 
F. Giuliani wishes to thank F. Gazzola, A. Falocchi, M. Garrione and G. Arioli for many interesting discussions about the analysis of suspension bridges.

F. Iandoli has been partially supported 
by research project PRIN 2017JPCAPN: ``Qualitative and quantitative aspects of nonlinear PDEs" of the 
Italian Ministry of Education and Research (MIUR).
R. Feola and J.E. Massetti  have been  supported by the  research project 
PRIN 2020XBFL ``Hamiltonian and dispersive PDEs" of the 
Italian Ministry of Education and Research (MIUR).  F.  Giuliani, R. Feola and J.E. Massetti have received funding from INdAM-GNAMPA, Project CUP E55F22 000270001.

\vspace{0.5cm}

\gr{Declarations}. Data sharing is not applicable to this article as no datasets were generated or analyzed during the current study.

\noindent
Conflicts of interest: The authors have no conflicts of interest to declare.

\bibliographystyle{plain}

\begin{thebibliography}{10}

\bibitem{ArioliGazzola2015}
G.~Arioli and F.~Gazzola.
\newblock {O}n a {N}onlinear {N}onlocal {H}yperbolic {S}ystem {M}odeling
  {S}uspension {B}ridges.
\newblock {\em Milan Journal of Mathematics}, 83:211--236, 2015.

\bibitem{ArioliGazzola2017}
G.~Arioli and F.~Gazzola.
\newblock {T}orsional instability in suspension bridges: {T}he {T}acoma
  {N}arrows {B}ridge case.
\newblock {\em Commun Nonlinear Sci Numer Simulat}, 42:342--357, 2017.

\bibitem{BerchioGazzola2015}
E.~Berchio and F.~Gazzola.
\newblock {A} qualitative explanation of the origin of torsional instability in
  suspension bridges.
\newblock {\em Nonlinear Analysis, Theory, Methods \& Applications},
  121:54--72, 2015.

\bibitem{Berti-Maspero-Murgante:EK}
M.~Berti, A.~Maspero, and F.~Murgante.
\newblock Local well posedness of the {E}uler-{K}orteweg equations on {$\Bbb
  T^d$}.
\newblock {\em J. Dynam. Differential Equations}, 33(3):1475--1513, 2021.

\bibitem{BSkdv}
J.~L. Bona and R.~Smith.
\newblock The initial-value problem for the {K}orteweg-de {V}ries equation.
\newblock {\em Philos. Trans. Roy. Soc. London Ser. A}, 278:555--601, 1975.

\bibitem{Brownjohn1994}
J.M.W. Brownjohn.
\newblock Observations on non-linear dynamic characteristics of suspension
  bridges.
\newblock {\em Earth- quake Engineering \& Structural Dynamics}, 23:1351--1367,
  1994.

\bibitem{Butikov}
E.~I. Butikov.
\newblock Subharmonic resonances of the parametrically driven pendulum.
\newblock {\em Journal of Physics A: Mathematical and General},
  35(30):6209--6231, 2002.

\bibitem{cazenaveBook}
T.~Cazenave.
\newblock {\em {S}emilinear {S}chr\"odinger {E}quations}.
\newblock Amer. Math. Soc., 2003.

\bibitem{Christ}
M.~Christ.
\newblock {I}llposedness of a {S}chr\"odinger equation with derivative
  nonlinearity.
\newblock preprint, 2003.

\bibitem{Crasta:2020wx}
G.~Crasta, A.~Falocchi, and F.~Gazzola.
\newblock A new model for suspension bridges involving the convexification of
  the cables.
\newblock {\em Zeitschrift f{\"u}r angewandte Mathematik und Physik}, 71(3):93,
  2020.

\bibitem{Falocchi}
A.~Falocchi.
\newblock {\em Stability analysis in suspension bridges through some new
  mathematical models}.
\newblock PhD thesis, Politecnico di Milano - Dipartimento di Matematica, 2018.

\bibitem{FI2018}
R.~Feola and F.~Iandoli.
\newblock {L}ocal well-posedness for quasi-linear {N}{L}{S} with large {C}auchy
  data on the circle.
\newblock {\em Ann. I. H. Poincar\'e (C) Anal. Non Lin\'eaire}, 36(1):119--164,
  2018.

\bibitem{FIJMPA}
R.~Feola and F.~Iandoli.
\newblock Local well-posedness for the quasi-linear {H}amiltonian
  {S}chr{\"o}dinger equation on tori.
\newblock {\em Journal de Math{\'e}matiques Pures et Appliqu{\'e}es},
  157:243--281, 2022.

\bibitem{Garrione2019}
M.~Garrione and F.~Gazzola.
\newblock {\em Nonlinear Equations for Beams and Degenerate Plates with Piers}.
\newblock Springer Cham, 2019.

\bibitem{Gazzola2013}
F.~Gazzola.
\newblock Nonlinearity in oscillating bridges.
\newblock {\em Electron. J. Diff. Equ.}, 211:1--47, 2013.

\bibitem{Gazzola:book}
F.~Gazzola.
\newblock {\em Mathematical models for suspension bridges}, volume~15 of {\em
  MS\&A. Modeling, Simulation and Applications}.
\newblock Springer, Cham, 2015.
\newblock Nonlinear structural instability.

\bibitem{Herm}
G.~Herrmann and W.~Hauger.
\newblock On the interrelation of divergence, flutter and auto-parametric
  resonance.
\newblock {\em Ingenieur-Archiv}, 42(2):81--88, 1973.

\bibitem{iandolikdv}
F.~Iandoli.
\newblock {O}n the {C}auchy problem for quasi-linear {H}amiltonian {K}d{V}-type
  equations.
\newblock {\em Qualitative Properties of Dispersive PDEs. INdAM 2021. Springer
  INdAM Series}, 52, 2022.

\bibitem{IN2023}
F.~Iandoli and J.~Niu.
\newblock {C}ontrollability of quasi-linear {H}amiltonian {S}chrödinger
  equations on tori.
\newblock preprint http://dx.doi.org/10.2139/ssrn.4405843 or
  https://arxiv.org/abs/2303.09625, 2023.

\bibitem{Irvine}
H.M. Irvine.
\newblock {\em Cable structures}.
\newblock Massachussets: MIT press series in structural mechanics, 1981.

\bibitem{JENKINS2013167}
A.~Jenkins.
\newblock Self-oscillation.
\newblock {\em Physics Reports}, 525(2):167--222, 2013.

\bibitem{Kato:spectral}
T.~Kato.
\newblock Quasi-linear equations of evolution, with applications to partial
  differential equations.
\newblock In {\em Spectral theory and differential equations ({P}roc.
  {S}ympos., {D}undee, 1974; dedicated to {K}onrad {J}\"{o}rgens)}, Lecture
  Notes in Math., Vol. 448, pages 25--70. Springer, Berlin, 1975.

\bibitem{KPV2}
C.E. Kenig, G.~Ponce, and L.~Vega.
\newblock {S}moothing effect and local existence theory for the generalized
  nonlinear {S}chr\"odinger equations.
\newblock {\em Invent. Math.}, 134:489--545, 1998.

\bibitem{KVP}
C.E. Kenig, G.~Ponce, and L.~Vega.
\newblock The {C}auchy problem for quasi-linear {S}chr\"odinger equations.
\newblock {\em Invent. Math.}, 158:343--388, 2004.

\bibitem{Lacarbonara2013}
W.~Lacarbonara.
\newblock {\em Nonlinear structural mechanics - Theory, dynamical phenomena and
  modeling}.
\newblock Springer, New York, 2013.

\bibitem{MMT3}
J.~Marzuola, J.~Metcalfe, and D.~Tataru.
\newblock {Q}uasilinear {S}chr\"odinger equations {I}{I}{I}: {L}arge {D}ata and
  {S}hort {T}ime.
\newblock {\em Arch. Ration. Mech. Anal.}, 242:1119--1175, 2021.

\bibitem{McKenna-Walter1987}
P.~J. McKenna and W.~Walter.
\newblock Nonlinear oscillations in a suspension bridge.
\newblock {\em Arch. Rational Mech. Anal.}, 98(2):167--177, 1987.

\bibitem{Metivier2008}
G.~M\'etivier.
\newblock {\em Para-differential calculus and applications to the Cauchy
  problem for nonlinear systems}, volume~5 of {\em CRM Series}.
\newblock Edizioni della Normale Superiore di Pisa, 2008.

\bibitem{MolSautTzv2001}
L.~Molinet, J.C. Saut, and N.~Tzvetkov.
\newblock {I}ll-posedness issues for the {B}enjamin-{O}no and related
  equations.
\newblock {\em SIAM J. Math. Anal.}, 33:982--988, 2001.

\bibitem{Navier}
C.~L. Navier.
\newblock {\em M\'emoire sur les ponts suspendus}.
\newblock Imprimerie Royale, Paris, 1823.

\bibitem{Ozawa}
T.~Ozawa and J.~Zhai.
\newblock {G}lobal existence of small classical solutions to nonlinear
  {S}chrödinger equations.
\newblock {\em Annales de l'I.H.P. Analyse non linéaire}, 25(2):303--311,
  2008.

\bibitem{Tzvetkov2007}
N.~Tzvetkov.
\newblock {I}ll-posedness issues for nonlinear dispersive equations.
\newblock preprint arXiv:math/0411455, 2007.

\end{thebibliography}

\end{document}